\documentclass{imamat}

\gridframe{N}

%
\jno{dri017}

\usepackage{modamsthm,modnatbib}

\usepackage{latexsym,amssymb,lastpage}
\usepackage{graphicx,amsfonts}
\usepackage{times,mathptmx,bm,amsmath}
\usepackage{dcolumn}

%
%
\renewenvironment{proof}[1][Proof]{\noindent\textit{#1. } }{\hfill$\square$}




 \newtheoremstyle{theorem}{6pt}{6pt}{\rm}{}{\sffamily}{ }{ }{}
 \theoremstyle{theorem}
\newtheorem{theorem}{\sc Theorem}[section]

  \newtheoremstyle{thm}{6pt}{6pt}{\rm}{}{\sffamily}{ }{ }{}
 \theoremstyle{thm}

 \newtheoremstyle{lemma}{6pt}{6pt}{\rm}{}{\sffamily}{ }{ }{}
 \theoremstyle{lemma}
 \newtheorem{lemma}{\sc Lemma}[section]
 \newtheoremstyle{lem}{6pt}{6pt}{\rm}{}{\sffamily}{ }{ }{}
 \theoremstyle{lem}

\newtheoremstyle{case}{6pt}{6pt}{\rm}{}{}{. }{ }{}
 \theoremstyle{case}

 \newtheoremstyle{statement}{6pt}{6pt}{\rm}{}{\sffamily}{ }{ }{}
\theoremstyle{statement}

 \newtheoremstyle{corollary}{6pt}{6pt}{\rm}{}{\sffamily}{ }{ }{}
 \theoremstyle{corollary}

  \newtheoremstyle{defi}{6pt}{6pt}{\rm}{}{\sffamily}{ }{ }{}
 \theoremstyle{defi}

  \newtheoremstyle{cor}{6pt}{6pt}{\rm}{}{\sffamily}{ }{ }{}
 \theoremstyle{cor}

\newtheoremstyle{example}{6pt}{6pt}{\rm}{}{\sffamily}{ }{ }{}
\theoremstyle{example}

\newtheorem{notation}[theorem]{Notation}
\newtheorem{problem}{Problem}
\newtheorem{proposition}[theorem]{\sc Proposition}

\newtheoremstyle{remark}{6pt}{6pt}{\rm}{}{\sffamily}{ }{ }{}
\theoremstyle{remark}
\newtheorem{remark}{\sc Remark}[section]

\newtheoremstyle{approximation}{6pt}{6pt}{\rm}{}{\sffamily}{ }{ }{}
\theoremstyle{approximation}

\newtheoremstyle{scheme}{6pt}{6pt}{\rm}{}{\sffamily}{ }{ }{}
\theoremstyle{scheme}

\newtheoremstyle{Algorithm}{6pt}{6pt}{\rm}{}{\sffamily}{ }{ }{}
\theoremstyle{Algorithm}

 \newtheoremstyle{Remark}{6pt}{6pt}{\rm}{}{\sffamily}{ }{ }{}
 \theoremstyle{Remark}

\newtheoremstyle{Lemma}{6pt}{6pt}{\rm}{}{\sffamily}{ }{ }{}
\theoremstyle{Lemma}

\newtheoremstyle{Assumption}{6pt}{6pt}{\rm}{}{\sffamily}{ }{ }{}
\theoremstyle{Assumption}

\newtheoremstyle{Proposition}{6pt}{6pt}{\rm}{}{\sffamily}{ }{ }{}
\theoremstyle{Proposition}

\newtheoremstyle{prop}{6pt}{6pt}{\rm}{}{\sffamily}{ }{ }{}
\theoremstyle{prop}

\newtheoremstyle{rem}{6pt}{6pt}{\rm}{}{\sffamily}{ }{ }{}
 \theoremstyle{rem}

\newtheoremstyle{hypo}{6pt}{6pt}{\rm}{}{\sffamily}{ }{ }{}
 \theoremstyle{hypo}

  \newtheoremstyle{Step}{6pt}{6pt}{\rm}{}{}{ }{ }{}
 \theoremstyle{Step}

 \newtheoremstyle{lema}{6pt}{6pt}{\rm}{}{\sffamily}{ }{ }{}
 \theoremstyle{lema}


\numberwithin{equation}{section}

\def\d{\hbox{d}}

\begin{document}
\title{Degenerate Dirichlet Problems Related to the Ergodic Property of an Elasto-Plastic Oscillator Excited by a Filtered White Noise \thanks{This research was partially supported by a grant from CEA, Commissariat \`a l'\'energie atomique and by the National Science Foundation under grant [DMS-0705247]}}
\author{{\sc Alain Bensoussan}\\[2pt]
International Center for Decision and Risk Analysis, School of Management, University of Texas at Dallas, Box 830688, Richardson, Texas 75083-0688,\\[6pt] 
Graduate School of Business, the Hong Kong Polytechnic University,\\[6pt] 
Graduate Department of Financial Engineering, Ajou University.\\[6pt]
alain.bensoussan@utdallas.edu
\footnote{This research in the paper was supported by WCU (World Class University) program through the National Research Foundation of Korea funded by the Ministry of Education, Science and Technology [R31 - 20007].}\\[6pt]
{\sc Laurent Mertz}\\[2pt]
Laboratoire Jacques-Louis Lions,Universit\'e Pierre et Marie Curie,\\[6pt]
4 place jussieu, Paris, 75005 France.\\[6pt]
mertz@ann.jussieu.fr\\[6pt]
{\rm [Received on september 2011]}}
\pagestyle{headings}
\markboth{A. Bensoussan, L. Mertz}{\rm Degenerate Dirichlet problems Related to the Ergodic property of the elasto-plastic oscillator excited by a filtered white noise}
\maketitle

\begin{abstract}
{A stochastic variational inequality is proposed to model an elasto-plastic oscillator excited by a filtered white noise. We prove the ergodic properties of the process and characterize the corresponding invariant measure. This extends Bensoussan-Turi's method (Degenerate Dirichlet Problems Related to the Invariant Measure of Elasto-Plastic Oscillators, AMO, 2008) with a significant additional difficulty of increasing the dimension. 
Two points boundary value problem in dimension 1 is replaced by elliptic equations in dimension 2. In the present context, Khasminskii's method (Stochastic Stability of Differential Equations, Sijthoff and Noordhof,1980) leads to the study of degenerate Dirichlet problems with partial differential equations and nonlocal boundary conditions.}
\end{abstract}

\section{Introduction}
Nonlinear oscillators subjected to vibrations represent useful models for predicting the response of mechanical structures when stressed beyond the elastic limit. When the excitation is a white noise, it has received considerable interest over the past decades. In a previous work \cite{BenTuri1}, the authors have considered the response of a white noise excited elasto-plastic oscillator using a stochastic variational inequality formulation. The results in \cite{BenTuri1} provide a framework to assess the accuracy of calculations made in the literature (see e.g., \cite{Feau1,Feau2,KarSchar}, and the references therein). In this paper, instead of considering white noise input signal whose power spectral density (PSD) is constant, we consider an excitation with a non-constant PSD which could be a more realistic framework.  We consider the excitation as the velocity of a ``reflected" Ornstein-Uhlenbeck process. Therefore, comparing with the elasto plastic oscillator excited by white noise, a third process occurs in the variational inequality. 
Consider $w(t)$ and $\tilde{w}(t)$, two independent Wiener processes and $x(t)$ an Ornstein-Ulhenbeck reflected process
\begin{equation}\label{chap6:excitation}
\d {x}(t) =-\alpha x(t) \d t + \d w(t) + 1_{\lbrace x(t)=-L \rbrace} \d \xi_t^1 - 1_{\lbrace x(t)=L \rbrace}\d \xi_t^2. 
\end{equation}
We have $-L \leq x(t) \leq L$. In this model, the stochastic excitation is given by 
\begin{equation}\nonumber
-\beta x(t) \d t +  \d \tilde{w}(t).
\end{equation}
The stochastic variational inequality model is given by:
 \begin{equation}
\left\{
\begin{array}{lll}
\d x(t) =-\alpha x(t) \d t + \d w(t) + 1_{\lbrace x(t)=-L \rbrace} \d \xi_t^1 - 1_{\lbrace x(t)=L \rbrace}\d \xi_t^2,\\[2mm]
\d y(t) = -( \beta x(t) + c_0y(t) + kz(t))\d t + \d \tilde{w}(t),\\[2mm]
(\d z(t)-y(t) \d t)(\zeta - z(t))  \geq   0,\\[2mm]
|\zeta|   \leq  Y,\\[2mm]
|z(t)|  \leq  Y.
\end{array}
\right.
\label{chap6:vi}
\end{equation}
When $\beta \neq 0$, $x(t)$ is involved in the dynamic of $y(t)$ and then this model will be referred as \underline{the 2d case}.  Whereas if $\beta = 0$, $x(t)$ is not involved in the dynamic of $y(t)$ and then $(y(t),z(t))$ satisfy the elasto-plastic oscillator problem of \cite{BenTuri1} which will be referred as \underline{the 1d case}.\\
\begin{notation}
Introduce the operators
\begin{eqnarray*}&&
\cr&&
Au :=   \dfrac{1}{2} u_{yy}  + \dfrac{1}{2} u_{xx}  - \alpha x u_{x} - (\beta x + c_0y+kz)  u_{y} + y u_z,\\[2mm]
\cr&&
B_+u :=   \dfrac{1}{2} u_{yy} +  \dfrac{1}{2} u_{xx}  - \alpha x u_{x} - (\beta x+c_0y + kY)  u_{y},\\[2mm]
\cr&&
B_-u :=   \dfrac{1}{2} u_{yy} +  \dfrac{1}{2} u_{xx}  - \alpha x u_{x} - (\beta x+c_0y - kY)  u_{y}.\\[2mm]
\end{eqnarray*}
\end{notation}
The infinitesimal generator of the process $(x(t),y(t),z(t))$, denoted by $\Lambda$ is given by:
\begin{equation}\nonumber
\Lambda : \begin{array}{ccl} \phi & \mapsto & \left\{
    \begin{array}{lll}
 A   \phi  & & \mbox{ if } z \in ]-Y,Y[,\\
 B_{\pm}  \phi & & \mbox{ if } z=\pm Y,\pm  y>0.
    \end{array}
\right.
 \end{array}
\label{chap6:ig}
\end{equation}

\begin{notation}
\[
\mathcal{O} :=  (-L,L) \times \mathbb{R} \times (-Y,Y) ; \quad \mathcal{O}^+ := (-L,L) \times (0, +\infty) \times \lbrace Y \rbrace ; \quad \mathcal{O}^- :=  (-L,L) \times (-\infty,0) \times \lbrace -Y \rbrace.
\]
\end{notation}

As the main result of the paper we prove the following:
\begin{theorem}
There exists one and only one probability measure $\nu$ on $\mathcal{O} \cup \mathcal{O}^- \cup \mathcal{O}^+$ satisfying
\[
\int_{\mathcal{O}} A \phi \d \nu +  \int_{\mathcal{O}^-}  B_- \phi \d \nu + \int_{\mathcal{O}^+}  B_+ \phi \d \nu = 0, \quad \forall \phi \quad \textup{smooth}.
\]
Moreover, $\nu$ has a probability density function $m$ such that
\[
\int_{\mathcal{O}} m(x,y,z) \d x \d y \d z+\int_{\mathcal{O}^+} m(x,y,Y) \d x \d y+\int_{\mathcal{O}^-} m(x,y,-Y)\d x \d y = 1,
\]
where
\begin{itemize}
\item
$\{m(x,y,z), \quad (x,y,z) \in \mathcal{O}\}$ is the elastic component,
\item
$\{m(x,y,Y), \quad (x,y) \in \mathcal{O}^+\}$ is the positive plastic component,
\item
and $\{m(x,y,-Y), \quad (x,y) \in \mathcal{O}^-\}$ is the negative plastic component.
\end{itemize}
In addition, $m$ satisfies in the sense of distributions the following equation in $\mathcal{O}$,
\[
 \alpha \frac{\partial}{\partial x}[xm] + \frac{\partial}{\partial y}[(\beta x + c_0 y + kz)m] -y \frac{\partial m }{\partial z} + \frac{1}{2}\frac{\partial^2 m}{\partial x^2} + \frac{1}{2}\frac{\partial^2 m}{\partial y^2} = 0 \quad \mbox{ in } \mathcal{O}, 
\]
and on the boundary
\begin{eqnarray*}
ym+ \frac{\partial}{ \partial x}[ xm ] + \frac{\partial}{ \partial y}[ (\beta x + c_0y+kY) m ] + \dfrac{1}{2} \dfrac{\partial^2 m}{\partial x^2 } + \dfrac{1}{2} \dfrac{\partial^2 m}{\partial y^2 }  = 0 , \quad  \mbox{ in } \mathcal{O}^+\\[3mm]
-ym + \frac{\partial}{ \partial x}[ xm ] + \frac{\partial}{ \partial y}[ (\beta x + c_0y-kY) m ] + \dfrac{1}{2} \dfrac{\partial^2 m}{\partial x^2 } + \dfrac{1}{2} \dfrac{\partial^2 m}{\partial y^2 }  =  0, \quad  \mbox{ in }\mathcal{O}^-\\[3mm]
 m  =  0, \quad  \mbox{ in } (-L,L) \times (-\infty,0) \times \{ Y \} \cup (-L,L) \times (0,\infty) \times \{ -Y \}.
\end{eqnarray*}

\label{chap6:thm}
\end{theorem}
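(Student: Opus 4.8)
\section*{Proof proposal}

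The plan is to follow Khasminskii's cycle method, as adapted to elasto-plastic oscillators in \cite{BenTuri1}, now carried out in the enlarged state space. First I would establish that the Markov process $(x(t),y(t),z(t))$ is positive recurrent. Since $x$ is a reflected Ornstein--Uhlenbeck process on the compact interval $[-L,L]$ driven by a non-degenerate noise, it is uniformly ergodic, and its contribution $-\beta x$ to the drift of $y$ is \emph{bounded}; this is the sense in which the passage from the 1d to the 2d case is ``only'' an increase of dimension rather than a structurally harder drift. The dissipation $-c_0y\,\d t$ in the $y$-equation, together with the boundedness of $z$ (recall $|z|\le Y$), suggests a Lyapunov function of the form $V(x,y,z)=\tfrac12 y^2+(\text{an elastic-potential term in }z)+(\text{a lower-order cross term in }x,y)$ for which $\Lambda V\le -\kappa V+C$ outside a compact set. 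I would use $V$ to prove that the expected time for the process to return to a fixed compact neighbourhood of the origin is finite, and more precisely that a full elastic--plastic ``cycle'' — an excursion that leaves the interior $\mathcal{O}$, visits $\mathcal{O}^+$ or $\mathcal{O}^-$, and returns — has finite expected length.

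Given a finite expected cycle length, the invariant probability $\nu$ is obtained as the normalized occupation measure over one cycle; one then checks that $\int\Lambda\phi\,\d\nu=0$ for every smooth $\phi$, which is precisely the displayed weak identity with $A$ on $\mathcal{O}$ and $B_\pm$ on $\mathcal{O}^\pm$. Uniqueness follows from irreducibility: using the non-degeneracy of the $(x,y)$-noise and the transport term $y\,u_z$, which is exactly what moves $z$, one shows that the process started from any point can, with positive probability, reach any prescribed open subset of $\mathcal{O}\cup\mathcal{O}^+\cup\mathcal{O}^-$; an irreducible, positively recurrent Markov process has a unique invariant measure. The existence of a density $m$ is then a regularity statement: in the interior $\mathcal{O}$ the operator $A$ diffuses only in $(x,y)$, but the bracket of the drift field $-\alpha xu_x-(\beta x+c_0y+kz)u_y+yu_z$ with $\partial_y$ produces the missing $\partial_z$ direction, so $A$ satisfies H\"ormander's condition and is hypoelliptic, forcing $\nu|_{\mathcal{O}}$ to have a smooth density; on the two-dimensional slices $\mathcal{O}^\pm$ the operators $B_\pm$ are uniformly elliptic in $(x,y)$, so the corresponding components are smooth by classical elliptic regularity.

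Finally I would convert the weak identity into the stated strong form. Integrating $\int_{\mathcal{O}}A\phi\,m\,\d x\,\d y\,\d z$ by parts produces the adjoint (forward Fokker--Planck) operator applied to $m$ in $\mathcal{O}$, namely $\alpha\partial_x(xm)+\partial_y[(\beta x+c_0y+kz)m]-y\,m_z+\tfrac12 m_{xx}+\tfrac12 m_{yy}=0$, together with boundary terms collected on the faces $\{z=\pm Y\}$. Matching these against the contributions of $\int_{\mathcal{O}^\pm}B_\pm\phi\,m$ — the integration by parts of $B_\pm$ on the two-dimensional slices — yields the two stated equations on $\mathcal{O}^+$ and $\mathcal{O}^-$, the extra term $\pm ym$ being exactly the flux of $z$ through the face, while the fact that the process cannot occupy $\{z=Y,\ y<0\}$ or $\{z=-Y,\ y>0\}$ (it leaves instantly) forces $m=0$ there. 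The main obstacle, in my view, is concentrated in the first two paragraphs: constructing a Lyapunov function that genuinely controls the cycle length in the presence of the third, bounded-but-nontrivial, driving process $x$, and then justifying the hypoelliptic/elliptic regularity together with the flux matching \emph{across} the interface where the three-dimensional region $\mathcal{O}$ meets the two-dimensional manifolds $\mathcal{O}^\pm$ and where the sign of $y$ switches the dynamics — this interface is where the degeneracy of the generator and the nonlocal character of the boundary conditions genuinely bite.
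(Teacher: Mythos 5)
Your proposal follows the probabilistic form of Khasminskii's method (Lyapunov function, cycles, regenerative occupation measure, irreducibility, hypoellipticity), whereas the paper never argues pathwise: its entire content is the analytic substitute, namely the interior and exterior Dirichlet problems with nonlocal boundary conditions, the ergodic operator $\mathbf{P}$ built from them, the operator $\mathbf{T}$ from the nonhomogeneous problems, and the characterization ``Problem \ref{chap6:pb5} is solvable iff $\nu(f)=0$'' from which both the weak identity and uniqueness of $\nu$ are extracted. The difference is not merely stylistic, and your outline leaves out exactly the step that the paper's machinery exists to supply. Your ``normalized occupation measure over one cycle'' is only well defined once the embedded Markov chain on $\bar{\Gamma}_1$ (the chain with transition operator $\mathbf{P}$) is known to possess an invariant probability $\gamma_\star$: the cycles here do not regenerate at a single point, so the regenerative construction needs a stationary law on $\bar{\Gamma}_1$ to start the cycle from. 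Finiteness of the expected cycle length (your Lyapunov estimate, which is fine as far as it goes and is the analogue of the paper's bound $|\xi|\leq\log y+K$ for the exterior nonhomogeneous problem) gives no such thing, and your appeal to ``irreducibility'' of a degenerate, reflected, variational-inequality dynamics is asserted, not proved. The paper's proof of this point is the whole of Section 4: Doob's criterion for $\mathbf{P}$, verified through the local regularity/compactness estimates for the exterior Dirichlet solutions and the maximum principle. Without an argument at this level your construction of $\nu$ does not get off the ground. Note also that the theorem asserts uniqueness among \emph{all} probability measures satisfying the weak identity, which is stronger than uniqueness of the invariant law of the process; the paper reaches it through Theorem \ref{chap6:THM}, and ``irreducible positive recurrent chains have a unique invariant measure'' does not by itself close that gap.

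The regularity part of your plan also has a genuine hole at the interface. Interior H\"ormander hypoellipticity does give a smooth density for $\nu$ restricted to $\mathcal{O}$ (the bracket computation you indicate is correct), but the stated equations on $\mathcal{O}^{\pm}$ contain the flux term $\pm y\,m(x,y,\pm Y)$, i.e. the \emph{trace} of the interior density on the degenerate faces $z=\pm Y$; interior hypoellipticity says nothing about regularity up to these faces, so the flux-matching integration by parts you describe is not justified without boundary estimates. This is precisely why the paper proves the local regularity propositions up to $z=\pm Y$ (Propositions \ref{chap6:prop:reg_ext1}--\ref{chap6:prop:reg_ext2} and their interior analogue) and why the boundary conditions are handled through the sets $K_{\pm Y,\|\phi\|}$ rather than pointwise. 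Likewise ``the process leaves $\{z=Y,\ y<0\}$ instantly, hence $m=0$ there'' is a heuristic, not a proof. So: a legitimate alternative route in outline, but the two places you yourself flag as ``the main obstacle'' — ergodicity of the embedded chain and regularity/flux matching at the interface — are exactly the places where your argument currently consists of assertions, and they are the places the paper spends its roughly forty pages on.
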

The proof will be based on solving a sequence of interior and exterior Dirichlet problems, which are interesting in themselves. We will put in parallel the 1d and 2d cases, in order to facilitate the reader's work.
In the 1d case, the variable $x$ disappears ($\beta=0$), we will still use the notation $A,B_+,B_-$ for the operators defined above without $x$.\\

Let us mention that our study presents a mathematical interest because it generalizes the method proposed by the first author and J. Turi \cite{BenTuri1} in the case of higher dimension. Non-local boundary conditions expressed in the form of differential equations in dimension 1 are replaced by elliptic partial differential equations (PDEs) in dimension 2. In the first case, there are two semi-explicit solutions. Thus, the non-local boundary conditions are reduced to two unknown numbers. In the second case, we do not know explicit formulas for solutions of the both elliptic on the boundary. In this context, these two solutions depend on two unknown functions respectively defined on the set $(-L,L)$.\\

In addition, the choice of the excitation \eqref{chap6:excitation} is also motivated by two technical considerations:
\begin{itemize}
\item
the first is to force $x(t)$ (through the processes $\xi^1,\xi^2$) to evolve in the compact set $[-L,L]$. Thus, as part of our proof, a compactness argument allows to show the ergodic property of the triple $(x(t),y(t),z(t))$. Note that this is not a problem in terms of applications, because if we choose $L$ large enough, then the process $x(t)$ is similar to an Ornstein-Ulhenbeck process.
\item
the second is the uncorrelation of $w(t)$ and $\tilde{w}(t)$. In our approach, based on PDEs associated to the triple $(x(t),y(t),z(t))$, we avoid the appearance of cross-derivative terms in the infinitesimal generator $\Lambda$. In the case where $w(t)$ and $\tilde{w}(t)$ are correlated, these cross-derivative terms yield more technical difficulties.
\end{itemize}
\section{The interior Dirichlet problem}
In this section, we prove existence and uniqueness to the homogeneous interior Dirichlet problem.
\subsection{Some background on the interior Dirichlet problem in the 1d case}
Let us recall the interior Dirichlet problem from \cite{BenTuri1}. Let $\bar{y}_1>0$,
\begin{notation}
\[
D_1:= (-\bar{y}_1,\bar{y}_1) \times (-Y,Y); \quad  D_{1}^+ := (0,\bar{y}_1) \times \lbrace Y \rbrace; \quad  D_{1}^- := (-\bar{y}_1,0) \times \lbrace -Y \rbrace
\]
and
\[
D_1^\epsilon := (-\bar{y}_1+ \epsilon,\bar{y}_1- \epsilon) \times (-Y,Y), \epsilon >0.
\]
\end{notation}
Denote $\bar{\tau}_1:= \inf \{ t > 0,  |y(t)| =  \bar{y}_1 \}$ and consider $\phi \in L^{\infty}(-Y,Y)$. We will use the following notation $\mathbb{E}_{p}(\cdot):= \mathbb{E}\{\cdot\ | (y(0),z(0)) = p\}$. It is shown that $\mathbb{E}_{(y,z)}(\phi(z(\bar{\tau}_1)) )$ solves a nonlocal Dirichlet problem:
Find $\eta \in L^\infty(D_1) \cap \mathcal{C}^0(D_1^\epsilon), \forall \epsilon >0$ such that
\begin{equation}\label{chap6:p1}
A \eta  = 0  \mbox{ in }  D_1, \quad  B_+ \eta =0   \mbox{ in }  D_1^+, \quad B_- \eta =0 \mbox{ in }  D_1^-
\end{equation}
with
\begin{equation*}
\eta(\bar{y}_1,z)  = \phi(z)  , \quad \eta(-\bar{y}_1,z)   =  0 , \quad  \mbox{ if }  z \in (-Y,Y). 
\end{equation*}
Since  $\eta(\bar{y}_1,Y) = \phi(Y)$ and $\eta(-\bar{y}_1,-Y) = 0$, there are semi-explicit solutions by solving ordinary differential equations for $\eta$ on the boundary at $z=Y$, and $z=-Y$ respectively,
\begin{equation}\nonumber
\eta(y,Y) = \eta_Y I(y,\bar{y}_1) + \phi(Y) I(0,y), \quad 0 < y \leq \bar{y}_1 ; \quad  \eta(y,-Y) = \eta_{-Y} I(-\bar{y}_1,y), \quad -\bar{y}_1 \leq y < 0,
\end{equation}
where $\eta_Y$ and $\eta_{-Y}$ are constants, and 
\[
I(a,b):= \frac{\int_a^b \exp(c_0 \lambda^2 + 2 k Y \lambda) \d \lambda}{\int_0^{\bar{y}_1} \exp(c_0 \lambda^2 + 2 k Y \lambda) \d \lambda}.
\]
The nonlocal condition is restricted to the value of these two constants. Based on these semi-explicit expressions a subset $K$ of $H^1(D_1)$ is defined for proving existence of the solution to (\ref{chap6:p1}) , by
\[
K:=\left\{
\begin{array}{l}
v \in H^1(D_1),\\[2mm]
v(-\bar{y}_1,z) = \phi(z) , \quad v(-\bar{y}_1,z)  =  0,\\[2mm]
v(y,Y)   =  v_Y I(y,\bar{y}_1) + \phi(Y) I(0,y), \quad 0 \leq y \leq \bar{y}_1\\[2mm]
v(y,-Y)  =  v_{-Y} I(-\bar{y}_1,y), \quad -\bar{y}_1 \leq y \leq 0\\[2mm]
v_Y , v_{-Y} \quad \mbox{are constant with} \quad  |v_{\pm Y} | \leq \| \phi \|_{L^\infty}.  
\end{array}
\right.
\]
The set $K$ is convex and not empty if $\phi(z) \in H^1(-Y,Y)$. We take $v(y,z) = \phi(z)I(0,y) \mathbf{1}_{\{y>0\}}$.
\subsection{The interior Dirichlet problem in the 2d case}
\begin{notation}
\[
\Delta_1 := (-L,L)  \times (-\bar{y}_1,\bar{y}_1) \times (-Y,Y),
\]
\[
\Delta_1^+ := (-L,L)  \times (0,\bar{y}_1) \times \lbrace Y \rbrace, \quad \Delta_1^- := (-L,L)  \times (-\bar{y}_1,0) \times \lbrace -Y \rbrace
\]
and
\[
 \Delta_1^\epsilon := (-L,L)  \times (-\bar{y}_1 + \epsilon,\bar{y}_1-\epsilon) \times (-Y,Y), \forall \epsilon >0.
\]
\end{notation}
Denote $\bar{\tau}_1:= \inf \{ t > 0,  |y(t)| =  \bar{y}_1 \}$ and consider $\phi \in L^{\infty}((-L,L) \times (-Y,Y))$. Similarly as before we use the notation $\mathbb{E}_{p}(\cdot):= \mathbb{E}\{\cdot\ | (x(0),y(0),z(0)) = p\}$. We want to define $\mathbb{E}_{(x,y,z)}(\phi(x(\bar{\tau}_1), z(\bar{\tau}_1)))$ as the solution of the interior Dirichlet problem stated below.
\begin{problem}\label{chap6:pb1}
Find $\eta \in L^\infty(\Delta_1) \cap \mathcal{C}^0(\Delta_1^\epsilon), \forall \epsilon >0$ such that
\begin{equation}\nonumber
A \eta  = 0  \mbox{ in }  \Delta_1, \quad  B_+ \eta = 0   \mbox{ in }  \Delta_1^+, \quad B_- \eta = 0 \mbox{ in }  \Delta_1^-
\end{equation}
and
\begin{eqnarray*}
\begin{array}{rclcl}
\eta_x(\pm L,y,z) & =&  0 & \mbox{ in } & (y,z) \in (-\bar{y}_1,\bar{y}_1) \times (-Y,Y),\\ 
\eta(x,\bar{y}_1,z) & = & \phi(x,z) & \mbox{ in } &  (x,z) \in (-L,L) \times (-Y,Y),\\ 
\eta(x,-\bar{y}_1,z)&  = & 0 & \mbox{ in } &  (x,z) \in (-L,L) \times (-Y,Y). 
\end{array} 
\end{eqnarray*}
\end{problem}
This is formal. We should consider the case of $\phi$ smooth first and precise the functional space, then proceed with the regularization.

As in the 1d-case, this problem is a nonlocal problem but the boundary condition are in two dimensions. Thus we need to solve partial differential equations for $\eta$ on the boundary at $z=Y$, and $z=-Y$ respectively. Here we do not have semi-explicit solution, indeed 
$\eta(x,y,Y)$ solves
\begin{equation}\label{chap6:etaplus}
 B_+ \eta =0 \mbox{ on } (-L,L) \times (0, \bar{y}_1) \mbox{ with } \eta_x(\pm L,y,Y) = 0, \quad \eta(x,\bar{y}_1,Y) = \phi(x,z)
 \end{equation}
with $\eta(x,0,Y)=\eta_Y(x)$,
and $\eta(x,y,-Y)$ solves 
\begin{equation}\label{chap6:etamoins}
B_- \eta =0 \mbox{ on } (-L,L) \times (-\bar{y}_1, 0) \mbox{ with } \eta_x(\pm L,y,-Y) = 0, \quad \eta(x,-\bar{y}_1,-Y) = 0
\end{equation}
with $\eta(x,0,-Y)=\eta_{-Y}(x)$ where $\eta_Y(x)$ and $\eta_Y(x)$ are unknown function with $\| \eta_{\pm Y} \|_{L^\infty} \leq \| \phi \|_{L^\infty}$.  Next, we give a convenient formulation of the boundary condition (\ref{chap6:etaplus})-(\ref{chap6:etamoins}).

\subsubsection{Boundary conditions}
In order to reformulate Problem \ref{chap6:pb1}, we consider first the equation (\ref{chap6:etaplus}) on the boundary $\Delta_1^+$.
Define $\beta^+(x,y)$ the solution of the mixed Dirichlet-Neuman problem
\begin{equation}\label{chap6:betaplus}
\left\{
\begin{array}{rrll}
B_+ \beta^+               & = & 0, &  \mbox{ in } (-L,L) \times (0,\bar{y}_1),\\[3mm]
\beta_x^+( \pm L,y)  & = & 0, &  \mbox{ in } (0,\bar{y}_1),\\[3mm]
\beta^+(x,\bar{y}_1) & = & \phi(x,Y), &  \mbox{ in } (-L,L),\\[3mm]
\beta^+(x,0)               & = & 0, &  \mbox{ in } (-L,L).
\end{array}
\right. 
\end{equation}

\begin{proposition}\label{chap6:prop:betaplus}
If $\phi(x,Y) \in H^1(-L,L)$ then there exists a unique solution to the equation \eqref{chap6:betaplus}
\[
\beta^+ \in H^1((-L,L) \times (0, \bar{y}_1))  \quad \mbox{satisfying} \quad \| \beta^+ \|_{L^\infty} \leq \| \phi \|_{L^\infty}. 
\]
\end{proposition}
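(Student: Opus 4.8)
The plan is to set up \eqref{chap6:betaplus} as a variational problem on a suitable affine subspace of $H^1((-L,L)\times(0,\bar y_1))$ and apply Lax--Milgram, then establish the $L^\infty$ bound by a maximum-principle / truncation argument. First I would note that the operator $B_+$, written out, is $\tfrac12\partial_{yy}+\tfrac12\partial_{xx}-\alpha x\,\partial_x-(\beta x+c_0y+kY)\partial_y$; since the Dirichlet data $\phi(x,Y)\in H^1(-L,L)$ is imposed only on the portion $\{y=\bar y_1\}$ of the boundary together with the homogeneous condition $\beta^+(x,0)=0$, and a homogeneous Neumann condition on $x=\pm L$, I would introduce a fixed lift $g\in H^1$ of the Dirichlet data (e.g.\ $g(x,y)=\phi(x,Y)\,y/\bar y_1$, which is in $H^1$ precisely because $\phi(x,Y)\in H^1(-L,L)$) and look for $\beta^+=g+w$ with $w$ in the closed subspace $V:=\{v\in H^1((-L,L)\times(0,\bar y_1)):v=0\text{ on }y=0\text{ and }y=\bar y_1\}$. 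The bilinear form is obtained by multiplying $B_+\beta^+=0$ by a test function and integrating by parts; the second-order part gives $\tfrac12\int(\nabla\beta^+\cdot\nabla v)$, the Neumann condition on $x=\pm L$ kills the lateral boundary terms, and the first-order drift terms are lower-order perturbations.

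The second step is coercivity. The form $a(w,v)=\tfrac12\int\nabla w\cdot\nabla v+\int(\alpha x\,w_x+(\beta x+c_0y+kY)w_y)v$ is not symmetric and the drift coefficients, though bounded on the bounded domain $(-L,L)\times(0,\bar y_1)$, are not small, so plain coercivity on $V$ need not hold. The standard fix is to absorb the drift: integrate the $v\,w_x$ and $v\,w_y$ terms by parts once more to symmetrize, or — cleaner — observe that on $V$ the Poincar\'e inequality holds (since $v$ vanishes on $y=0$), and then either (a) pass to the operator with a large shift $B_+-\mu$ for $\mu$ big enough that $a(w,w)+\mu\|w\|_{L^2}^2\ge c\|w\|_{H^1}^2$, solve that, and remove the shift via Fredholm alternative using that the homogeneous problem has only the zero solution by the maximum principle, or (b) use the Lax--Milgram variant for non-coercive forms satisfying an inf--sup condition, which again follows once uniqueness is known. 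Either route reduces existence to uniqueness of the homogeneous problem, and uniqueness follows from the weak maximum principle for the uniformly elliptic operator $B_+$ (no zeroth-order term, so constants are solutions and the comparison principle applies): a solution with zero boundary data on $\{y=0\}\cup\{y=\bar y_1\}$ and zero conormal derivative on $x=\pm L$ must vanish.

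The third step is the $L^\infty$ estimate $\|\beta^+\|_{L^\infty}\le\|\phi\|_{L^\infty}$. Here I would use the probabilistic representation suggested by the context: $\beta^+(x,y)=\mathbb E_{(x,y)}[\phi(x(\sigma),Y)\,\mathbf 1_{\{y(\sigma)=\bar y_1\}}]$ where $\sigma$ is the exit time of the diffusion with generator $B_+$ from $(-L,L)\times(0,\bar y_1)$ (reflected at $x=\pm L$, absorbed at $y\in\{0,\bar y_1\}$); since $|\phi|\le\|\phi\|_{L^\infty}$ and the indicator is at most $1$, the bound is immediate, and this representation also cleanly handles the regularization from smooth $\phi$ to $\phi(x,Y)\in H^1$. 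Alternatively, purely PDE-wise, test the equation with $(w-\|\phi\|_{L^\infty})^+$ and $(w+\|\phi\|_{L^\infty})^+$ (noting these lie in $V$ because the boundary data is bounded by $\|\phi\|_{L^\infty}$) to get the one-sided bounds — this is the standard truncation proof of the maximum principle in $H^1$.

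I expect the main obstacle to be the coercivity/existence step: because the drift terms in $B_+$ are genuinely present (not small) and the boundary conditions are mixed Dirichlet--Neumann on a rectangle with corners, one has to be careful to route through the shifted operator plus Fredholm alternative (or an inf--sup argument) rather than claiming direct coercivity, and to verify that the corner geometry does not obstruct the $H^1$ trace theory needed to make sense of the boundary conditions. The maximum-principle and $L^\infty$ parts are routine once the linear solvability is in hand.
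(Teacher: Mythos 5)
Your overall strategy (variational formulation on the rectangle, coercivity only after a zero-order shift, reduction of existence to uniqueness, $L^\infty$ bound by comparison with constants) is in the same spirit as the paper, but the two places where you diverge are exactly the places where the paper spends its effort. For existence the paper does not invoke Fredholm or an inf--sup condition: it keeps the coercive shifted form $b_Y(\cdot,\cdot)+\lambda(\cdot,\cdot)$, defines the map $T_\lambda$ by the shifted variational problem on the convex set $K_Y$, proves $\|T_\lambda z\|_{L^\infty}\le\|\phi\|_{L^\infty}$ by truncation \emph{for the shifted problem} (where the $\lambda$-term absorbs everything), and then iterates $\xi_\lambda^{n+1}=T_\lambda(\xi_\lambda^n)$, passing to a weak $H^1$ limit in which the $\lambda$-terms cancel; this simultaneously produces the solution of \eqref{chap6:betaplus} and the bound $\|\beta^+\|_{L^\infty}\le\|\phi\|_{L^\infty}$. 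For uniqueness the paper does not use a weak maximum principle for $H^1$ solutions at all: it differentiates in $x$, observes that $\chi:=\xi_x$ solves a Dirichlet problem (the Neumann condition turns into $\chi=0$ on $x=\pm L$), deduces $\chi\in H^1_0$, hence $\xi\in H^2$ and continuous up to the boundary, and only then applies the classical maximum principle. Your Fredholm route and the paper's iteration both work in principle, and yours is arguably more economical on the existence side; the paper's device has the advantage that the $L^\infty$ bound comes for free along the construction.

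The genuine soft spot in your plan is the claim that the $L^\infty$ bound and uniqueness follow from ``the standard truncation proof of the maximum principle in $H^1$'' applied to the unshifted problem. Testing with $v=(u-\|\phi\|_{L^\infty})^+$ gives
\begin{equation*}
\frac12\int|\nabla v|^2=-\int\bigl(\alpha x\,v_x+(\beta x+c_0y+kY)v_y\bigr)v
=\frac{\alpha+c_0}{2}\int v^2-\frac{\alpha L}{2}\int_{\{x=\pm L\}}v^2,
\end{equation*}
and since $\operatorname{div}$ of the drift is $\alpha+c_0>0$ this does not force $v=0$ unless $(\alpha+c_0)$ times the Poincar\'e constant of $(0,\bar y_1)$ is less than one, i.e.\ unless $\bar y_1$ or the coefficients are small. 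So the naive truncation fails in general; you need either the De Giorgi--Stampacchia refinement of the weak maximum principle for bounded drift (and you must check it in the mixed Dirichlet--Neumann setting), or the paper's mechanism of proving the bound for the coercive shifted problem and propagating it through the iteration, or the paper's regularity bootstrap to reach a continuous solution and then the classical maximum principle. The same remark applies to your uniqueness step. Finally, the probabilistic representation you suggest as an alternative is not available off the shelf here --- constructing and justifying such representations is precisely what this sequence of Dirichlet problems is for --- so using it to prove the $L^\infty$ bound would be circular in the logic of the paper.
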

\begin{proof}
Define $D_Y^+ := (-L,L) \times (0,\bar{y}_1)$ and consider on $H^1(D_Y^+)$ the bilinear form
\[
b_Y(\xi,\chi)= \frac{1}{2} \int_{D_Y^+} \big{(}  \xi_x \chi_x + \xi_y \chi_y \big{)} \d x \d y + \int_{D_Y^+} \big{(} \alpha x \xi_x + (c_0y+kY+\beta x) \xi_y \big{)} \chi \d x \d y. 
\]
For $\lambda$ sufficiently large $b_Y(\xi,\chi) + \lambda (\xi,\chi)$ is coercive. Now, define the convex set
\[
K_Y = \lbrace \xi \in H^1(D_Y^+), \xi(x,\bar{y}_1) = \phi(x,Y) \mbox{ and } \xi(x,0) = 0 \rbrace
\] 
which is not empty since $\phi(x,Y) \in H^{\frac{1}{2}}(-L,L)$. Take $z \in K_Y$ with $\| z \|_{L^\infty} \leq \| \phi \|_{L^\infty}$, we define $\xi_\lambda$ to be the unique solution of 
\begin{equation}
b_Y(\xi_\lambda, \chi-\xi_\lambda) + \lambda (\xi_\lambda, \chi - \xi_\lambda) \geq \lambda (z,\chi-\xi_\lambda), \qquad \xi_\lambda \in K_Y, \forall \chi \in K_Y
\label{chap6:p4}
\end{equation}
We have defined a map $T_\lambda(z)=\xi_\lambda$ from $K_Y \to K_Y$. Let us check that 
\begin{equation}
\| \xi_\lambda \|_{L^\infty} \leq \| \phi \|_{L^\infty} = C.
\label{chap6:p5}
\end{equation}
Indeed, in (\ref{chap6:p4}) we take $\chi = \xi_\lambda - (\xi_\lambda-C)^+ \in K_Y$. Hence
\[
-b_Y(\xi_\lambda, (\xi_\lambda - C)^+) - \lambda (\xi_\lambda, (\xi_\lambda-C)^+) \geq \lambda (z,(\xi_\lambda - C)^+)\\
\]
and
\[
b_Y((\xi_\lambda - C)^+, (\xi_\lambda - C)^+) + \lambda |(\xi_\lambda - C)^+|_{L^2} \leq -\lambda (z+C,(\xi_\lambda - C)^+).
\]
Since $z+C \geq 0$ it follows that $(\xi_\lambda - C)^+=0$, hence $\xi_\lambda \leq C$. Similarly, we check that $(-\xi_\lambda -C)^+=0$, hence we have (\ref{chap6:p5}). Consider then the sequence $\xi_\lambda^n$ defined by 
\begin{equation}
b_Y(\xi_\lambda^{n+1}, \chi-\xi_\lambda^{n+1}) + \lambda (\xi_\lambda^{n+1}, \chi- \xi_\lambda^{n+1}) \geq \lambda (\xi_\lambda^{n}, \chi-\xi_\lambda^{n+1})
\label{chap6:p6}
\end{equation}
with 
\[ 
\xi_\lambda^{0} \in K_Y, \| \xi_\lambda^0 \|_{L^\infty} \leq \| \phi \|_{L^\infty}.
\]
We can take $\xi_\lambda^{0}(x,y) = \frac{y}{\bar{y}_1} \phi(x,Y)$. From (\ref{chap6:p5}), we have $\| \xi_\lambda^n \|_{L^\infty(D_Y^+)} \leq C$ and from (\ref{chap6:p6}), we have $\| \xi_\lambda^n \|_{H^1(D_Y^+)} \leq C'$.
Then, we can consider a subsequence, also denoted by $\xi_\lambda^n$ such that 
\[
\xi_\lambda^n \to \xi \qquad \mbox{ in } H^1(D_Y^+) \mbox{ weakly and in } L^\infty(D_Y^+) \mbox{ weakly } \star
\]
also,
\[
\xi_\lambda^n \to \xi \qquad \mbox{ in } L^2(D_Y^+) \mbox{ strongly. }
\]
From (\ref{chap6:p6}) we obtain
\begin{equation}\nonumber
b_Y(\xi, \chi - \xi) \geq 0, \qquad \xi \in K_Y, \forall \chi \in K_Y.
\end{equation}
We conclude easily that $\xi$ is a solution of (\ref{chap6:betaplus}) also $\| \xi \|_{L^\infty} \leq \| \phi \|_{L^\infty}$.
Now, in order to prove uniqueness, we must prove that a solution $\xi \in H^1(D_Y^+)$ satisfying
\begin{equation}\label{chap6:p7}
\begin{array}{c}
B_+ \xi = 0 \mbox{ in } (-L,L) \times (0,\bar{y}_1)\\[2mm]
\xi_x(-L,y) = \xi_x(L,y)=0\\[2mm]
\xi(x,\bar{y}_1) = \xi(x,0)=0
\end{array}
\end{equation}
is identically zero.\\
Consider $\chi:=\xi_x$ then we have 
\begin{equation}\nonumber
\begin{array}{c}
B_+ \chi + \alpha \chi + \beta \xi_y = 0 \mbox{ in } D_Y^+,\\[2mm]
\chi(-L,y) = \chi(L,y)=0,\\[2mm]
\chi(x,\bar{y}_1) = \chi(x,0)=0,
\end{array}
\end{equation}
hence $\chi \in H_0^1(D_Y^+)$. This implies 
\[
\xi_{xx} \in L^2(D_Y^+), \qquad \xi_{yx} \in L^2(D_Y^+).
\] 
From equation (\ref{chap6:p7}) we deduce $\xi_{yy} \in L^2(D_Y^+)$. Hence $\xi \in H^2(D_Y^+)$. In particular, $\xi$ is continuous on $\bar{D_Y^+}$. We have $\| \xi \|_{L^\infty} = 0$, so $\xi =0$.
\end{proof}

Now, consider the following convex sets,
\begin{equation}
K_{Y,M} := \lbrace \chi^+ \in H^1((-L,L) \times (0,\bar{y}_1)) \mbox{ satisfying } (\ref{chap6:pKYM}) ; \quad  \chi^+(x,\bar{y}_1)=0 ; \quad \| \chi^+ \|_{L^\infty} \leq M \rbrace\\[2mm]
\label{chap6:KYM}
\end{equation}
where
\begin{equation}
\begin{array}{l}
\int_{-L}^{L} \int_{0}^{\bar{y}_1} \lbrace \frac{1}{2}(\chi_x^+ \psi_x + \chi_y^+ \psi_y) + (\alpha x \chi_x^+ + (\beta x + c_0 y + kY) \chi_y^+ )\psi \rbrace \d x \d y=0,\\[3mm]
 \forall \psi \in H^1((-L,L)\times(0,\bar{y}_1)) \mbox{ with } \psi(x,0)=\psi(x,\bar{y}_1)=0.\\[3mm]
\end{array}
\label{chap6:pKYM}
\end{equation}
and
\begin{equation}
K_{-Y,M} := \lbrace \chi^- \in H^1((-L,L) \times (-\bar{y}_1,0)) \mbox{ satisfying } (\ref{chap6:pKmYM})  ; \quad  \chi^-(x,-\bar{y}_1)=0 ; \quad \| \chi^- \|_{L^\infty} \leq M \rbrace\\[3mm]
\label{chap6:KmYM}
\end{equation}
where
\begin{equation}
\begin{array}{l}
\int_{-L}^{L} \int_{-\bar{y}_1}^{0} \lbrace \frac{1}{2}(\chi_x^- \psi_x + \chi_y^- \psi_y) + (\alpha x \chi_x^- + (\beta x + c_0 y - kY) \chi_y^- )\psi \rbrace \d x \d y=0,\\[3mm]
 \forall \psi \in H^1((-L,L)\times(-\bar{y}_1,0)) \mbox{ with } \psi(x,0)=\psi(x,-\bar{y}_1)=0.\\[3mm]
\end{array}
\label{chap6:pKmYM}
\end{equation}
These sets are not empty since they contain $0$.

\begin{remark}
\begin{itemize}
\item
\label{chap6:rem:r1}
$\forall \pi(y)$ function of $y$ such that $\pi(0)=0$ ,
\[
  \lbrace \chi  \pi  ; \quad \chi \in K_{Y,M} \rbrace \quad \mbox{ and } \quad 
  \lbrace \chi  \pi  ; \quad \chi \in K_{-Y,M} \rbrace
  \quad \mbox{ are } H^1-\mbox{bounded}.
\]
\item
If $\chi \in K_{Y,\| \phi \|_{L^\infty}}$, denoting $\omega :=\beta^+ + \chi$, we have
\[
B_+ \omega = 0 \qquad \mbox{ and } \qquad \max (\vert \omega(x,0) \vert, \vert \omega(x,\bar{y}_1)\vert ) \leq \| \phi \|_{L^\infty}, 
\]
so a maximum principle implies $\| \omega \|_{L^\infty} \leq \| \phi \|_{L^\infty}$.
\end{itemize}
\end{remark}

Using the sets $K_{Y,\| \phi \|}$ and $K_{-Y,\| \phi \|}$, the following result gives a convenient formulation of the boundary conditions in Problem \ref{chap6:pb1}.

\begin{proposition}
The Problem \ref{chap6:pb1} can be reformulated in the following way: find $\eta \in L^\infty(\Delta_1) \cap \mathcal{C}^0(\Delta_1^\epsilon), \forall \epsilon >0$ such that
\begin{equation}\nonumber
A \eta = 0  \mbox{ in } \Delta_1, \quad \eta(x,y,Y) - \beta^+(x,y) \in K_{Y,\| \phi \|}, \quad \eta(x,y,-Y)  \in K_{-Y,\| \phi \|}
\end{equation}
and
\begin{eqnarray*}
\begin{array}{rclcl}
\eta_x(\pm L,y,z)     &  =  &  0,             &   \mbox{ in }  &  (y,z) \in (-\bar{y}_1,\bar{y}_1) \times (-Y,Y),\\ 
\eta(x,\bar{y}_1,z)   &  =  & \phi(x,z),  &   \mbox{ in }  &  (x,z) \in (-L,L) \times (-Y,Y),\\ 
\eta(x,-\bar{y}_1,z)  &  =  & 0,              &   \mbox{ in }  &  (x,z) \in (-L,L) \times (-Y,Y). 
\end{array} 
\end{eqnarray*}
\label{chap6:formulation2}
\end{proposition}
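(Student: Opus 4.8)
The plan is to translate the two boundary conditions carried by $\Delta_1^+$ and $\Delta_1^-$ into the membership statements $\eta(\cdot,\cdot,Y)-\beta^+\in K_{Y,\|\phi\|}$ and $\eta(\cdot,\cdot,-Y)\in K_{-Y,\|\phi\|}$; the operator $A$, the lateral Neumann conditions $\eta_x(\pm L,y,z)=0$ for $z\in(-Y,Y)$, and the Dirichlet conditions on $\{y=\pm\bar y_1\}$ are identical in both formulations and require nothing. I would treat the $z=Y$ face in detail and then copy the argument, with $\beta^-\equiv 0$, on the $z=-Y$ face.

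Write $\eta^+(x,y):=\eta(x,y,Y)$. Taking traces on $\{z=Y\}$ in Problem~\ref{chap6:pb1}, the condition ``$B_+\eta=0$ in $\Delta_1^+$'' together with the boundary data recorded in \eqref{chap6:etaplus} is precisely: $B_+\eta^+=0$ in $(-L,L)\times(0,\bar y_1)$, $\eta^+_x(\pm L,y)=0$, $\eta^+(x,\bar y_1)=\phi(x,Y)$ and $\eta^+(x,0)=\eta_Y(x)$ free with $\|\eta_Y\|_{L^\infty}\le\|\phi\|_{L^\infty}$. Setting $\chi^+:=\eta^+-\beta^+$ and using linearity of $B_+$ with the boundary data of $\beta^+$ from \eqref{chap6:betaplus}, $\chi^+$ is $B_+$-harmonic, $\chi^+_x(\pm L,\cdot)=0$, $\chi^+(x,\bar y_1)=0$ and $\chi^+(x,0)=\eta_Y(x)$. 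Two elementary facts then close the forward implication. First, ``$B_+\chi^+=0$ in the interior together with $\chi^+_x(\pm L,\cdot)=0$'' is, by integration by parts on $(-L,L)\times(0,\bar y_1)$ against test functions vanishing at $y=0,\bar y_1$ but free at $x=\pm L$, equivalent to the weak identity \eqref{chap6:pKYM} --- the lateral Neumann condition being exactly the natural boundary condition of \eqref{chap6:pKYM}. Second, $\chi^+$ is $B_+$-harmonic with $\chi^+(\cdot,\bar y_1)=0$ and $|\chi^+(\cdot,0)|=|\eta_Y|\le\|\phi\|_{L^\infty}$, so the maximum principle for the uniformly elliptic operator $B_+$ on the rectangle with mixed Dirichlet--Neumann data (Hopf's lemma excluding extrema on the open Neumann sides $\{x=\pm L\}$) gives $\|\chi^+\|_{L^\infty}\le\|\phi\|_{L^\infty}$. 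Hence $\chi^+\in K_{Y,\|\phi\|}$. Conversely, if $\chi^+:=\eta^+-\beta^+\in K_{Y,\|\phi\|}$, then $\eta^+=\beta^++\chi^+$ is again $B_+$-harmonic, satisfies $\eta^+_x(\pm L,\cdot)=0$ by \eqref{chap6:betaplus} and the natural boundary condition of \eqref{chap6:pKYM}, has $\eta^+(x,\bar y_1)=\phi(x,Y)$, obeys $\|\eta^+\|_{L^\infty}\le\|\phi\|_{L^\infty}$ by the second item of the Remark above, and has a free trace $\eta^+(x,0)=\chi^+(x,0)$ of $L^\infty$ norm at most $\|\phi\|_{L^\infty}$; this is exactly \eqref{chap6:etaplus}.

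The $z=-Y$ face is handled identically, with one simplification: the Dirichlet datum at $y=-\bar y_1$ in \eqref{chap6:etamoins} is $0$, so the analogue of $\beta^+$ vanishes and one obtains directly $\eta(\cdot,\cdot,-Y)\in K_{-Y,\|\phi\|}$ via \eqref{chap6:pKmYM}. Compatibility with $\eta\in L^\infty(\Delta_1)\cap\mathcal{C}^0(\Delta_1^\epsilon)$ is immediate, the $L^\infty$ bounds on the faces coming from the Remark and continuity on $\Delta_1^\epsilon$ being an interior property. As flagged before the statement, the argument should be run first for smooth $\phi$ --- so that $\eta^\pm$ lie in $H^1$ of the planar domain and all traces and integrations by parts are licit --- and then extended to $\phi\in L^\infty$ by the approximation used for $\beta^+$ in Proposition~\ref{chap6:prop:betaplus}.

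The hard part is not this algebra but making the weak--strong passage rigorous with a \emph{natural} (Neumann) boundary condition on $\{x=\pm L\}$: extracting $\eta^+_x(\pm L,\cdot)=0$ from \eqref{chap6:pKYM} needs $H^2$-type regularity of $\eta^+$ up to the lateral boundary away from the corners, i.e.\ elliptic regularity for $B_+$ with mixed boundary data, and one must also verify that the trace on $\{z=\pm Y\}$ of the merely $L^\infty$-and-interior-continuous solution $\eta$ lands in $H^1$ of the corresponding two-dimensional domain. The maximum principle near the four corners of the rectangle likewise requires the usual care.
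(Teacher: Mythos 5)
Your proposal is correct and follows essentially the same route as the paper: decompose $\eta(\cdot,\cdot,Y)=\beta^{+}+\chi^{+}$ (and use the zero particular solution on the $z=-Y$ face), identify the strong mixed problem with Neumann sides $\{x=\pm L\}$ with the weak formulations \eqref{chap6:pKYM}--\eqref{chap6:pKmYM}, and enforce the $L^{\infty}$ bound so that $\chi^{\pm}\in K_{\pm Y,\|\phi\|}$. Your extra touches (maximum principle for the forward bound on $\chi^{+}$, the explicit converse, and the regularity caveats for the weak--strong passage) only elaborate what the paper states more tersely, in particular its Remark on $\omega=\beta^{+}+\chi$.
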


\begin{proof}
First, we can obtain the generic solution of (\ref{chap6:etaplus}) by considering any function $\chi^+$ which satisfies 
\begin{equation*}
\chi^+ \in H^1((-L,L) \times (0,\bar{y}_1)) 
\end{equation*}
and
\begin{equation}\label{chap6:p8} 
B_+ \chi^+  =  0 , \quad \chi_x^+(\pm L,y)  = 0 , \quad \chi^+(x,\bar{y}_1)  =  0.
\end{equation}
Note that we have not defined the value of $\chi^+$ for $y=0$, hence $\chi^+$ is certainly not unique. 
We add the condition that $\chi^+$ is bounded by $\| \phi \|_{L^\infty}$. We define in a similar way the function $\chi^-$ such that 
\begin{equation*}
\chi^- \in H^1((-L,L) \times (-\bar{y}_1,0))
\end{equation*}
and 
\begin{equation}\label{chap6:p14}
B_- \chi^- =  0 , \quad  \chi_x^-( \pm L,y) = 0 , \quad \chi^-(x,-\bar{y}_1) = 0. 
\end{equation}
Then, interpreting (\ref{chap6:p8}) as (\ref{chap6:pKYM}) and (\ref{chap6:p14}) as (\ref{chap6:pKmYM}) repectively, we obtain $\chi^+  \in K_{Y,\| \phi \|}$ and $\chi^-  \in K_{-Y,\| \phi \|}$.
Hence, the set of solutions of (\ref{chap6:etaplus}) and (\ref{chap6:etamoins})  can be written as follows
\begin{equation}\nonumber
\eta(x,y,Y)-\beta^+(x,y) \in K_{Y,\| \phi \|}, y >0 ; \quad \eta(x,y,-Y)  \in K_{-Y,\| \phi \|}, \quad  y<0.
\end{equation} 
\end{proof}

\subsubsection{Approximation (part 1)}
We study Problem \ref{chap6:pb1} by a regularization method in the next proposition. Define $A^\epsilon := A + \frac{\epsilon}{2} \frac{\partial^2}{\partial z^2}$.  

\begin{proposition}\label{chap6:prop:p26}
The following problem: find $\eta^\epsilon \in L^{\infty}(\Delta_1) \cap H^1(\Delta_1)$ such that $\| \eta^\epsilon \|_{L^\infty} \leq \| \phi \|_{L^{\infty}}$,
\begin{equation}\label{chap6:EPSILON}
A^\epsilon \eta^\epsilon  = 0  \mbox{ in }   \Delta_1, \quad  \eta^\epsilon(x,y,Y) - \beta_{+}(x,y) \in K_{Y,\| \phi \|}, \quad \eta^\epsilon(x,y,-Y) \in K_{-Y,\| \phi \| }
\end{equation}
and
\begin{eqnarray*}
\begin{array}{lclcl}
\eta_x^\epsilon(\pm L,y,z)     & = & 0,                   & \mbox{ in } & (y,z) \in (-\bar{y}_1,\bar{y}_1) \times (-Y,Y),\\ 
\eta_z^\epsilon(x,y,\pm Y)     & =&  0,                  & \mbox{ in } & (x, \mp y) \in (-L,L) \times (0,\bar{y}_1),\\ 
\eta^\epsilon(x,\bar{y}_1,z)   & = & \phi(x,z),      & \mbox{ in } &  (x,z) \in (-L,L) \times (-Y,Y),\\ 
\eta^\epsilon(x,-\bar{y}_1,z)  & = & 0,                  & \mbox{ in } &  (x,z) \in (-L,L) \times (-Y,Y), 
\end{array} 
\end{eqnarray*} 
has a unique solution.
\end{proposition}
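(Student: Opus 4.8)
The plan is to adapt, to this higher-dimensional and now \emph{uniformly elliptic} setting, the fixed-point-plus-truncation scheme already used for Proposition~\ref{chap6:prop:betaplus}. Since $A^\epsilon=A+\tfrac{\epsilon}{2}\partial_{zz}$ is uniformly elliptic on the bounded box $\Delta_1$, I first attach to it the bilinear form
\[
a^\epsilon(u,v):=\tfrac12\int_{\Delta_1}\big(u_xv_x+u_yv_y\big)+\tfrac{\epsilon}{2}\int_{\Delta_1}u_zv_z+\int_{\Delta_1}\big(\alpha x\,u_x+(\beta x+c_0y+kz)u_y-y\,u_z\big)v,
\]
whose natural (unconstrained) boundary conditions are precisely the homogeneous Neumann conditions $u_x=0$ on $\{x=\pm L\}$ and $u_z=0$ on $\{z=\pm Y\}$ appearing in the statement. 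Because $|x|\le L$, $|z|\le Y$ and $|y|\le\bar y_1$, the zero-order-free first-order terms have bounded coefficients, so there is $\lambda_0$ with $a^\epsilon(u,u)+\lambda|u|_{L^2(\Delta_1)}^2\ge\delta\|u\|_{H^1(\Delta_1)}^2$ for all $\lambda\ge\lambda_0$.

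Next I encode the Dirichlet data and the non-local conditions through a convex set. Fix a smooth $H^1(\Delta_1)$ lift $R$ of the boundary data, with $R(x,\bar y_1,z)=\phi(x,z)$, $R(x,-\bar y_1,z)=0$, $R|_{\Sigma_+}=\beta^+$, $R|_{\Sigma_-}=0$ and $\|R\|_{L^\infty}\le\|\phi\|_{L^\infty}$ (possible since $\phi$ is smooth, $\beta^+\in H^1$ by Proposition~\ref{chap6:prop:betaplus}, and the data are compatible on the edges, e.g. $\beta^+(x,\bar y_1)=\phi(x,Y)$); here $\Sigma_+:=(-L,L)\times(0,\bar y_1)$, $\Sigma_-:=(-L,L)\times(-\bar y_1,0)$. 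Then I look for $\chi:=\eta^\epsilon-R$ in
\[
\mathcal K:=\big\{v\in H^1(\Delta_1):\ v=0\ \text{on}\ \{y=\pm\bar y_1\},\ \ v|_{\Sigma_+}\in K_{Y,\|\phi\|},\ \ v|_{\Sigma_-}\in K_{-Y,\|\phi\|},\ \ \|v+R\|_{L^\infty}\le\|\phi\|_{L^\infty}\big\},
\]
which is convex, contains $0$ (recall $K_{\pm Y,\|\phi\|}\ni0$, and on $\Sigma_\pm$ the $L^\infty$ bound is automatically consistent with $K_{\pm Y,\|\phi\|}$ by the maximum-principle remark on $\omega=\beta^++\chi$), and — this is the delicate closedness point — is closed in $H^1(\Delta_1)$ once one upgrades the $H^{1/2}$ trace convergence on $\Sigma_\pm$ to $H^1$ using the weak equations \eqref{chap6:pKYM}--\eqref{chap6:pKmYM}, the $L^\infty$ bound, and the $H^1$-boundedness noted in the Remark after \eqref{chap6:pKmYM}. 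For $\lambda\ge\lambda_0$ fixed and $z\in\mathcal K$, Lions--Stampacchia gives a unique $\xi_\lambda=:T_\lambda(z)\in\mathcal K$ with $a^\epsilon(\xi_\lambda,v-\xi_\lambda)+\lambda(\xi_\lambda,v-\xi_\lambda)\ge\lambda(z,v-\xi_\lambda)+\langle F,v-\xi_\lambda\rangle$ for all $v\in\mathcal K$, where $F:=-a^\epsilon(R,\cdot)$; this defines $T_\lambda:\mathcal K\to\mathcal K$, and no separate truncation step is needed since the $L^\infty$ bound is built into $\mathcal K$.

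Then I run the monotone iteration $\xi_\lambda^{n+1}=T_\lambda(\xi_\lambda^n)$ from $\xi_\lambda^0=0\in\mathcal K$, exactly as in \eqref{chap6:p6}: testing with a fixed element of $\mathcal K$ and using that the $L^2$-norms of all iterates are controlled by $\|\phi\|_{L^\infty}$ gives a uniform bound $\|\xi_\lambda^n\|_{H^1(\Delta_1)}\le C'$; extracting $\xi_\lambda^n\rightharpoonup\chi$ weakly in $H^1$, weakly-$\star$ in $L^\infty$ and strongly in $L^2$, and using the closedness of $\mathcal K$, the limit satisfies $a^\epsilon(\chi+R,v-\chi)\ge0$ for all $v\in\mathcal K$. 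Since the $L^\infty$ constraint is inactive in the interior (strong maximum principle) while the remaining constraints are affine, this decouples into $A^\epsilon(\chi+R)=0$ in $\Delta_1$, the two Neumann conditions as natural conditions on the half-faces $\{z=\pm Y,\ \pm y<0\}$ where the test functions of $\mathcal K$ are free, and the conditions $\eta^\epsilon|_{\Sigma_+}-\beta^+\in K_{Y,\|\phi\|}$, $\eta^\epsilon|_{\Sigma_-}\in K_{-Y,\|\phi\|}$; thus $\eta^\epsilon:=\chi+R$ solves \eqref{chap6:EPSILON}. For uniqueness, the difference $w$ of two solutions satisfies $A^\epsilon w=0$, $w=0$ on $\{y=\pm\bar y_1\}$, Neumann on $\{x=\pm L\}$ and on the Neumann half-faces, with $w|_{\Sigma_+},w|_{\Sigma_-}$ still of $K_{\pm Y}$-type; a regularity bootstrap as at the end of the proof of Proposition~\ref{chap6:prop:betaplus} ($w_x$ vanishing on the lateral and Dirichlet parts $\Rightarrow w_{xx},w_{xy}\in L^2\Rightarrow w_{yy},w_{zz}\in L^2$) puts $w\in H^2(\Delta_1)\subset C(\overline{\Delta_1})$, and the maximum principle — combined with Hopf's lemma on the Neumann faces and the two-dimensional maximum principle on $\Sigma_\pm$, which pushes any extremum to the edge $\{y=0\}$ and then back into the interior — forces $w\equiv0$.

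The main obstacle, I expect, is the interplay between the non-local boundary constraints and the variational machinery: proving that $\mathcal K$ is closed in $H^1(\Delta_1)$ despite the gap between the $H^{1/2}$ regularity of traces and the $H^1$ nature of membership in $K_{\pm Y,\|\phi\|}$; checking that the variational inequality over $\mathcal K$ genuinely decouples into the interior equation $A^\epsilon\eta^\epsilon=0$ together with the \emph{mixed} boundary conditions — nonlocal on the half-faces $\{z=\pm Y,\ \pm y>0\}$ and Neumann on $\{z=\pm Y,\ \pm y<0\}$ — rather than only a one-sided inequality; and obtaining the $H^1/L^\infty$ a priori estimates uniformly up to the edges $\{y=0,\ z=\pm Y\}$, where the type of boundary condition changes.
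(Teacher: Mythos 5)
Your overall strategy is the same as the paper's: a penalized coercive bilinear form $a^\epsilon(u,v)+\lambda(u,v)$, a closed convex set in $H^1(\Delta_1)$ encoding the Dirichlet data, the nonlocal constraints $K_{\pm Y,\|\phi\|}$ and the sup-norm bound, a map $T_\lambda$ defined through Lions--Stampacchia, recovery of the Neumann conditions as natural conditions, and uniqueness via an $L^\infty$/maximum-principle argument (your uniqueness sketch is in fact more detailed than the paper's one-line remark, and your homogenization by a lift $R$ is only a cosmetic variant of the paper's explicit element \eqref{chap6:p19} of $K$).

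The genuine gap is in the existence step. The paper does \emph{not} iterate $T_\lambda$ at this stage: it uses the uniform $L^\infty$ and $H^1$ bounds to form the set $\bar K$, notes that $\bar K$ is compact in $L^2(\Delta_1)$ and that $T_\lambda$ is $L^2$-continuous and maps $\bar K$ into itself, and applies Schauder's fixed point theorem; at a genuine fixed point $u=T_\lambda u$ the $\lambda$-terms cancel identically and $a(u,v-u)\ge 0$ follows. You instead run the Picard scheme $\xi^{n+1}_\lambda=T_\lambda(\xi^n_\lambda)$ and pass to weak subsequential limits, but nothing guarantees that the limits of $\xi^{n}_\lambda$ and $\xi^{n+1}_\lambda$ coincide: $T_\lambda$ is not a contraction (the natural estimate $a^\epsilon(e,e)+\lambda|e|^2_{L^2}\le\lambda(w_1-w_2,e)$ only gives a Lipschitz constant $\lambda/(\lambda-\lambda_0)>1$ in $L^2$), and the iteration is not monotone — the comparison argument that produces monotone iterates in the exterior problem starts from the constant $\|h\|_{L^\infty}$ and uses a sign, whereas here you start from $0$ with sign-changing data and prove no comparison. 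Passing to the limit in your scheme only yields $a^\epsilon(\chi'+R,v-\chi')+\lambda(\chi'-\chi,v-\chi')\ge 0$ with possibly $\chi\neq\chi'$, which is not the required inequality. The repair is exactly the paper's: keep your uniform bounds and conclude with Schauder (or Tychonoff) rather than with the iteration. Two further points deserve care but are at the same level of looseness as the paper itself: the closedness of the convex set in $H^1(\Delta_1)$ (your upgrade argument only yields membership of the limiting trace in a $\pi$-weighted version of $K_{\pm Y,\|\phi\|}$, since the Remark gives $H^1$-bounds for $\chi\pi$, not for $\chi$), and the decoupling of the variational inequality into $A^\epsilon\eta^\epsilon=0$ plus the Neumann conditions, which requires handling the active set of the sup-norm constraint before the maximum principle for $A^\epsilon$ is available.
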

As in the 1d case, we formulate a variational inequality to prove existence of solutions. In the present context, we consider a convex subset of $H^1(\Delta_1)$ which is adapted to the two dimensional boundary condition by
\begin{equation}\nonumber
K= \left\{ 
\begin{array}{ll}
\psi \in H^1(\Delta_1), \quad \| \psi \|_{\infty} \leq \| \phi \|_{\infty}, \\
\psi(x,\bar{y}_1,z)  =   \phi(x,z), \quad \mbox{ for }  (x,z) \in (-L,L) \times (-Y,Y),\\
\psi(x,-\bar{y}_1,z)  =  0,  \quad \mbox{ for } (x,z) \in (-L,L) \times (-Y,Y),\\
\psi(.,.,Y) -\beta^+(x,y) \in K_{Y,\| \phi \|_{L^\infty}},\\
\psi(.,.,-Y) \in K_{-Y,\| \phi \|_{L^\infty}}.\\
\end{array}
\right.
\end{equation}
\begin{proposition}
The set $K$ is a closed non-empty subset of $H^1(\Delta_1)$.
\end{proposition}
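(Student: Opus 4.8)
The plan is to verify the three defining properties of $K$ in turn: non-emptiness, convexity (implicit in "convex subset"), and closedness in $H^1(\Delta_1)$.

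\emph{Non-emptiness.} First I would exhibit an explicit element. Following the 1d case, where $v(y,z) = \phi(z) I(0,y)\mathbf{1}_{\{y>0\}}$ was used, I would take something like $\psi_0(x,y,z) = \beta^+(x,y)\,\frac{(z+Y)}{2Y}\mathbf{1}_{\{y>0\}}$ adjusted so that the traces at $y = \pm\bar y_1$ match $\phi(x,z)$ and $0$. Concretely one needs $\psi_0(x,\bar y_1,z) = \phi(x,z)$, $\psi_0(x,-\bar y_1,z)=0$, $\psi_0(\cdot,\cdot,Y)-\beta^+ \in K_{Y,\|\phi\|}$ and $\psi_0(\cdot,\cdot,-Y)\in K_{-Y,\|\phi\|}$. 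A convenient choice is to set $\psi_0(x,y,z)$ equal to the harmonic-type interpolation in $z$ between $\beta^+(x,y)$ at $z=Y$ (for $y>0$), $\beta^-(x,y)$ at $z=-Y$ (for $y<0$, where $\beta^-$ solves the analogue of \eqref{chap6:betaplus} with $B_-$ and boundary datum $0$, hence $\beta^-\equiv 0$), and with $y$-dependent matching to $\phi(x,\cdot)$ at $y=\bar y_1$; since $\beta^+(x,\bar y_1)=\phi(x,Y)$ by \eqref{chap6:betaplus}, the traces are compatible. One then checks $\psi_0 \in H^1(\Delta_1)$ using $\beta^+\in H^1$ from Proposition~\ref{chap6:prop:betaplus} and the assumption $\phi \in H^1$, that $\|\psi_0\|_\infty \le \|\phi\|_\infty$ via Proposition~\ref{chap6:prop:betaplus} and Remark, and that $\psi_0 - \beta^+$ restricted to $z = Y$ satisfies the weak equation \eqref{chap6:pKYM} (it vanishes, so this is trivial), and similarly at $z=-Y$. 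Thus $K\neq\emptyset$.

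\emph{Closedness.} Let $\psi_n \in K$ with $\psi_n \to \psi$ in $H^1(\Delta_1)$. I would argue property by property. The bound $\|\psi_n\|_\infty \le \|\phi\|_\infty$ passes to the limit: extract a subsequence converging a.e., or note that the $L^\infty$ ball is weakly-$\star$ closed and strong $H^1$ convergence implies $L^2$ convergence hence a.e. along a subsequence, giving $\|\psi\|_\infty \le \|\phi\|_\infty$. The affine trace conditions $\psi(x,\pm\bar y_1,z) = \phi$ or $0$ pass to the limit by continuity of the trace operator $H^1(\Delta_1)\to L^2$ of the faces $\{y=\pm\bar y_1\}$. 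For the membership $\psi(\cdot,\cdot,Y)-\beta^+ \in K_{Y,\|\phi\|}$: the trace operator onto the face $\{z=Y\}$ is continuous from $H^1(\Delta_1)$ into $H^{1/2}$ of that face, so $\psi_n(\cdot,\cdot,Y)\to \psi(\cdot,\cdot,Y)$ there, in particular in $H^1$ weakly after noting $K_{Y,M}$ itself is defined inside $H^1((-L,L)\times(0,\bar y_1))$; the linear weak equation \eqref{chap6:pKYM}, the boundary value $\chi^+(x,\bar y_1)=0$, and the $L^\infty$ bound are all preserved under this convergence (the first two are closed linear conditions, the last is the weakly-$\star$ closed ball argument again). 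Hence $\psi(\cdot,\cdot,Y)-\beta^+\in K_{Y,\|\phi\|}$, and symmetrically for the $-Y$ face. Therefore $\psi \in K$.

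\emph{Convexity} is immediate: all defining conditions are either affine equalities (traces), membership in the convex sets $K_{\pm Y,\|\phi\|}$ shifted by a fixed function, or the convex constraint $\|\cdot\|_\infty \le \|\phi\|_\infty$; a convex combination of two elements satisfies each.

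The main obstacle I anticipate is the regularity bookkeeping on the \emph{edges} of $\Delta_1$ — where the face $\{z=Y\}$ meets $\{y=0\}$ or $\{y=\bar y_1\}$ — when constructing the explicit element $\psi_0$ and when invoking trace continuity onto the two-dimensional faces. One must be careful that the trace of an $H^1(\Delta_1)$ function onto $\{z=Y,\,y>0\}$ genuinely lands in the space where $K_{Y,\|\phi\|}$ lives (an $H^1$ function of $(x,y)$ with the prescribed vanishing at $y=\bar y_1$ and the prescribed weak PDE), and that the compatibility $\beta^+(x,\bar y_1)=\phi(x,Y)$ from \eqref{chap6:betaplus} makes the corner conditions consistent so that $\psi_0$ is actually in $H^1$ and not merely piecewise $H^1$. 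The rest is routine functional analysis.
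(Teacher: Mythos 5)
Your argument is correct and is essentially the paper's proof: closedness from continuity of the trace operator, and non-emptiness by an explicit interpolant built from $\beta^+$ and $\phi$ whose consistency at the edges rests on the compatibility $\beta^+(x,\bar y_1)=\phi(x,Y)$ from \eqref{chap6:betaplus}; the paper just writes the element out explicitly, namely $\psi(x,y,z)=\frac{y}{\bar y_1}\bigl[\phi(x,z)-\frac{z}{2Y}\phi(x,Y)-\frac{1}{2}\phi(x,Y)\bigr]\mathbf{1}_{\{y\ge 0\}}+\bigl(\frac{z}{2Y}+\frac{1}{2}\bigr)\beta^+(x,y)\mathbf{1}_{\{y\ge 0\}}$, whose traces are $\phi(x,z)$ at $y=\bar y_1$, $0$ at $y=-\bar y_1$, $\beta^+$ at $z=Y$ ($y>0$) and $0$ at $z=-Y$ ($y<0$), so that the shifted trace conditions hold trivially with the element $0$ of $K_{\pm Y,\|\phi\|}$. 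Your version leaves the formula implicit but identifies all the same checks, so it follows the same route.
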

\begin{proof}
The fact that $K$ is closed follows from the continuity of the trace operator. Now, pick the function,
\begin{equation}
\psi(x,y,z)= \frac{y}{\bar{y}_1} \big{[} \phi(x,z) - \frac{z}{2 Y}\phi(x,Y) - \frac{1}{2} \phi(x,Y)  \big{]} 1_{\lbrace y \geq 0 \rbrace} + (\frac{z}{2 Y} + \frac{1}{2}) \beta_{+}(x,y)1_{\lbrace y \geq 0 \rbrace}. 
\label{chap6:p19}
\end{equation}
We have 
\begin{eqnarray*}
\psi(x,\bar{y}_1,z) & = & \phi (x,z),\\
\psi(x,-\bar{y}_1,z) & = & 0,\\
\psi(x,y,Y) & = & \beta_+(x,y), \quad y > 0,\\
\psi(x,y,-Y) & = & 0, \quad y < 0.
\end{eqnarray*}
So, if $\phi(x,z) \in H^1(\partial \Delta_1)$, $\phi(x,Y) \in H^1(-L,L)$, then the function $\psi$ defined by (\ref{chap6:p14}) belongs to $K$.
\end{proof}
%

\begin{remark}
If $u \in K$ and $w \in H^1(\Delta_1)$ with $w(x,\pm \bar{y}_1,z) = 0$ and $w(x,y,\pm Y) = 0$, for $0< \pm y < \bar{y}_1$ then $u+w \in K$. 
\end{remark}

Consider the bilinear form

\begin{eqnarray*}
a(u,v) & = &  \frac{1}{2} \int_{\Delta_1} \lbrace \epsilon u_z v_z + u_yv_y + u_xv_x \rbrace \d x \d y \d z\\[2mm]
& & + \int_{\Delta_1} (\beta x+c_0y+kz)u_y v \d x \d y \d z - \int_{\Delta_1} y u_z v   \d x \d y \d z  +  \int_{\Delta_1} \alpha x u_x v   \d x \d y \d z.
\end{eqnarray*}

Equation \eqref{chap6:EPSILON} is formulated as follows $a(u,v-u) \geq 0, \quad \forall v \in K, u \in K$.

\begin{proof}[Proof of Proposition \ref{chap6:prop:p26}]
First, existence is proved by variational argument.
For $\lambda$ sufficiently large $a(u,v) + \lambda (u,v)$ is coercive on $H^1(\Delta_1)$ and for $f \in L^2(\Delta_1)$ we can solve the variational inequality
\[
a(u,v-u) + \lambda (u,v-u)\geq (f,v-u),  \qquad \forall v \in K, u \in K.
 \]
We define the map $u = T_\lambda w$ where $u$ is the unique solution of   
\[
a(u,v-u) + \lambda (u,v-u)\geq \lambda (w,v-u),  \qquad \forall v \in K, u \in K.
 \]
 The following lemma shows that $T_\lambda$ is a contracting map. (see proof in Appendix)
\begin{lemma}\label{chap6:lem:l1}
If $\| w \|_{L^\infty} \leq \| \phi \|_{L^\infty}$ then $\| u \|_{L^\infty} \leq \| \phi \|_{L^\infty}$.
\end{lemma}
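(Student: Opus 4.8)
\emph{Approach.} Set $C:=\|\phi\|_{L^\infty}$. This is a maximum–principle bound for the obstacle–type problem defining $T_\lambda$, and I would prove it exactly as the $L^\infty$–estimate \eqref{chap6:p5} inside the proof of Proposition~\ref{chap6:prop:betaplus}, by a Stampacchia truncation. I would test the variational inequality $a(u,v-u)+\lambda(u,v-u)\ge\lambda(w,v-u)$ against the clipped competitors
\[
v_+:=u-(u-C)^+\qquad\text{and}\qquad v_-:=u+(-u-C)^+ ,
\]
that is, $v_+=\min(u,C)$ and $v_-=\max(u,-C)$. Writing $g:=(u-C)^+\ge 0$, the choice $v=v_+$ gives $a(u,g)+\lambda(u,g)\le\lambda(w,g)$; writing $\theta:=(-u-C)^+\ge 0$, the choice $v=v_-$ gives $a(u,\theta)+\lambda(u,\theta)\ge\lambda(w,\theta)$.

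\emph{The admissibility check.} The only real work is to verify $v_\pm\in K$. On the faces $y=\pm\bar{y}_1$ the traces of $u$ are $\phi$ (with $|\phi|\le C$) and $0$, so $g$ and $\theta$ vanish there and $v_\pm$ keep the prescribed Dirichlet data. On $\Delta_1^+$ we have $u(\cdot,\cdot,Y)=\beta^++\chi^+$ with $\chi^+\in K_{Y,\|\phi\|_{L^\infty}}$, so by the maximum–principle part of the Remark following the definition of $K_{\pm Y,M}$ one gets $\|u(\cdot,\cdot,Y)\|_{L^\infty}\le C$; hence the truncations are inactive on $\Delta_1^+$ and the trace of $v_\pm$ there equals that of $u$, which still lies in $\beta^++K_{Y,\|\phi\|_{L^\infty}}$. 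The same reasoning on $\Delta_1^-$ (where $u(\cdot,\cdot,-Y)=\chi^-\in K_{-Y,\|\phi\|_{L^\infty}}$ is bounded by $C$) shows the truncations are inactive there too. Finally $\|v_\pm\|_{L^\infty}\le C$ since $v_+=\min(u,C)$, $v_-=\max(u,-C)$ and $u\in K$ is already bounded by $C$. Thus $v_\pm\in K$.

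\emph{Conclusion.} Because $a(\cdot,\cdot)$ is linear in its first argument and every term of it differentiates that argument, $a(C,\cdot)\equiv 0$; and $\nabla g$ is carried by $\{u>C\}$, where $u=C+g$, so $a(u,g)=a(g,g)$ and $(u,g)=(g,g)+C\!\int g$, while $(w,g)\le C\!\int g$ because $w\le C$ and $g\ge 0$. Substituting into $a(u,g)+\lambda(u,g)\le\lambda(w,g)$ leaves $a(g,g)+\lambda\|g\|_{L^2}^2\le 0$; for $\lambda$ large $a+\lambda(\cdot,\cdot)$ is coercive on $H^1(\Delta_1)$, hence $g=0$, i.e.\ $u\le C$. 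The mirror computation with $\theta$ (now $u=-C-\theta$ where $\theta>0$, so $a(u,\theta)=-a(\theta,\theta)$, $(u,\theta)=-(\theta,\theta)-C\!\int\theta$, and $(w,\theta)\ge -C\!\int\theta$ since $w\ge -C$) gives $a(\theta,\theta)+\lambda\|\theta\|_{L^2}^2\le 0$, so $\theta=0$ and $u\ge -C$. Therefore $\|u\|_{L^\infty}\le\|\phi\|_{L^\infty}$. \emph{The main obstacle} is precisely the admissibility of $v_\pm$ on the lateral faces $\Delta_1^\pm$: it rests on the elliptic maximum principle for $B_\pm$ recorded in the Remark, which forces the truncation to switch off on the boundary and hence not to disturb the linear constraint sets $K_{\pm Y,\|\phi\|_{L^\infty}}$; everything else is the routine coercivity/truncation computation already used in the $1$d case.
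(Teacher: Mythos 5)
Your proposal is correct and follows essentially the same route as the paper: the paper also proves the bound by Stampacchia truncation, testing the variational inequality with $v=u-(u-\gamma)^+$ and $v=u+(u+\gamma)^-$ (with $\gamma=\|\phi\|_{L^\infty}$), justifying admissibility by the vanishing of the truncations on the faces $y=\pm\bar y_1$ and on $z=\pm Y$ for $0<\pm y<\bar y_1$ (via the maximum-principle remark on $K_{\pm Y,\|\phi\|}$ and the remark that $u+w\in K$ for such $w$), and then concluding $( u-\gamma)^+=(u+\gamma)^-=0$ from the same algebraic identity and the coercivity of $a+\lambda(\cdot,\cdot)$. Nothing essential differs; your admissibility check is simply spelled out in more detail than in the paper.
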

Moreover, taking $v=u_0 \in K$ we deduce
\[
a(u,u) + \lambda |u|_{L^2}^2 \leq a(u,u_0) + \lambda (u,u_0) + \lambda  \gamma |u_0 - u |_{L^1}.
\]
That implies 
\[
\|u\|_{H^1(\Delta_1)} \leq M,  
\]
for a constant $M$ which depends only of $\lambda, \epsilon$, $H^1$ norm of $u_0$ and $\gamma$.\\  
Now, we define
\[
 \bar{K}= \lbrace w \in K : \| w \|_{L^\infty} \leq  \| \phi \|_{L^\infty}, \| w \|_{H^1} \leq  M  \rbrace.
\] 
We have $T_\lambda:L^2(\Delta_1) \to L^2(\Delta_1)$ is continuous, maps $\bar{K}$ into itself and $\bar{K}$ is a compact subset of $L^2$.
Then Schauder's theorem implies $T_\lambda$ has a fixed point $u \in \bar{K}$ which satisfies
\[
a(u,v-u) \geq 0, \qquad \forall v \in K.
\]
Let us check that $u$ is solution of (\ref{chap6:EPSILON}). As $u \in K$, we have
\[
\begin{array}{rclcl}
B_{+}u & = &0 & \mbox{ in } & \Delta^{+}\\
B_- u & = & 0 & \mbox{ in }& \Delta^{-}\\
u(x,\bar{y}_1,z)& = & \phi(x,z)& \mbox{ in }  & (-L,L) \times (-Y,Y)\\
u(x,-\bar{y}_1,z)& = & 0 & \mbox{ in } & (-L,L) \times (-Y,Y)\\
\end{array}
\]
Moreover,
\[
\forall v \in \mathcal{H}:=\left \{  v \in H^1(\Delta_1) \mbox{ such that } 
\left\{
 \begin{array}{l} 
v(x,\pm \bar{y}_1,z) = 0\\
v(x,y,\pm Y) = 0, \quad 0< \pm y < \bar{y}_1
\end{array}
\right. 
\right \},
\]
we have $u+v \in K \mbox{ and }  a(u,v) \geq 0$. Then, 
\[
A^\epsilon u = 0 \mbox{ in the sense } \mathcal{H}'
\]
and integration by parts gives
\begin{eqnarray*}
& & \frac{\epsilon}{2} \int_{-L}^L \int_{-\bar{y}_1}^0 u_z(x,y,Y) v(x,y,Y) \d x \d y - \frac{\epsilon}{2}  \int_{-L}^L \int_0^{\bar{y}_1} u_z(x,y,-Y) v(x,y,-Y) \d x \d y\\ 
& & + \int_{-\bar{y}_1}^{\bar{y}_1} \int_{-Y}^{Y} u_x(L,y,z) v(L,y,z) \d y \d z -  \int_{-\bar{y}_1}^{\bar{y}_1} \int_{-Y}^{Y}  u_x(-L,y,z) v(-L,y,z) \d y \d z =0.\\
\end{eqnarray*}
Uniqueness of the solution to problem \eqref{chap6:EPSILON} comes from $\| u \|_{L^\infty} \leq \| \phi \|_{L^\infty}$.
\end{proof}
\subsubsection{Approximation (part 2)}
When $\phi$ is smooth, we can exhibit a solution to the problem (\ref{chap6:EPSILON_LIMIT}) by extracting a converging subsequence of $\eta^\epsilon$.
Let $\theta$ a smooth fonction such that $\theta(\pm \bar{y}_1)=0$. Denote $\pi(y):=y^p \theta(y)^q$ for some $p,q$.

\begin{proposition}\label{chap6:prop:etaepsilon}
The following problem: find $\eta \in L^{\infty}(\Delta_1)$ such that $\eta \pi \in H^1(\Delta_1)$,
\begin{equation}\label{chap6:EPSILON_LIMIT}
A \eta  = 0  \mbox{ in }   \Delta_1, \quad  \eta(x,y,Y) - \beta_{+}(x,y) \in K_{Y,\| \phi \|}, \quad \eta(x,y,-Y) \in K_{-Y,\| \phi \| }
\end{equation}
and
\begin{eqnarray*}
\begin{array}{lclcl}
\eta_x(\pm L,y,z) & = & 0 & \mbox{ in } & (y,z) \in (-\bar{y}_1,\bar{y}_1) \times (-Y,Y),\\ 
\eta(x,\bar{y}_1,z) & = & \phi(x,z) & \mbox{ in } &  (x,z) \in (-L,L) \times (-Y,Y),\\ 
\eta(x,-\bar{y}_1,z) & =&  0 & \mbox{ in } &  (x,z) \in (-L,L) \times (-Y,Y), 
\end{array} 
\end{eqnarray*} 
has a unique solution.
\end{proposition}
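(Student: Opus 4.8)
\emph{Strategy.} I would obtain the solution as a limit point of the family $\eta^\epsilon$ furnished by Proposition~\ref{chap6:prop:p26}, so the heart of the proof is a set of a priori bounds on $\eta^\epsilon$ that do not deteriorate as $\epsilon\to 0$. We already have $\|\eta^\epsilon\|_{L^\infty}\le\|\phi\|_{L^\infty}$. To this I would add: (ii) a uniform weighted energy bound $\|\eta^\epsilon\pi\|_{H^1(\Delta_1)}+\sqrt\epsilon\,\|\eta^\epsilon_z\pi\|_{L^2(\Delta_1)}\le C$, with $\pi=y^p\theta^q$ for suitably chosen $p,q$; and (iii) for each $\delta>0$ a uniform bound on $\eta^\epsilon_x$ and $\eta^\epsilon_y$ in $L^2(\{\delta<|y|<\bar{y}_1\})$. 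Granting (ii)--(iii), one extracts a subsequence along which $\eta^\epsilon\pi\rightharpoonup\eta\pi$ in $H^1(\Delta_1)$, $\eta^\epsilon\to\eta$ weakly-$\star$ in $L^\infty$ and, by Rellich together with (iii), strongly in $L^2$ on every $\{\delta<|y|<\bar{y}_1\}$, with $\eta^\epsilon_x,\eta^\epsilon_y\rightharpoonup\eta_x,\eta_y$ there; in particular $\eta\in L^\infty(\Delta_1)$ and $\eta\pi\in H^1(\Delta_1)$. This $\eta$ will be the required solution, and uniqueness follows as in the earlier approximation results from the representation of solutions via the exit problem for $(x(t),y(t),z(t))$ at $\bar\tau_1$ (equivalently, a maximum principle for the degenerate coupled problem), which applied to the difference of two solutions, having vanishing data, forces it to be zero.

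\emph{The weighted estimate.} For fixed $\epsilon>0$, $\eta^\epsilon$ is, by interior elliptic regularity, a classical solution of $A^\epsilon\eta^\epsilon=0$ satisfying the boundary conditions of Proposition~\ref{chap6:prop:p26}, so one may multiply the equation by $\eta^\epsilon\pi^2$ and integrate over $\Delta_1$. The faces $y=\pm\bar{y}_1$ contribute nothing since $\pi$ vanishes there; the Neumann condition $\eta^\epsilon_x=0$ removes the faces $x=\pm L$; on $z=\pm Y$ the condition $\eta^\epsilon_z=0$ holds on the parts $\{\mp y>0\}$, while on the complementary parts the first-order term $y\eta^\epsilon_z$ produces only the surface contribution $\tfrac12\int y(\eta^\epsilon)^2\pi^2$ (no interior term, since $\partial_z(y\pi^2)=0$), bounded by $C\|\phi\|_{L^\infty}^2$, and the second-order term carries the factor $\epsilon$ and, after Cauchy--Schwarz, is dominated by $\epsilon\|\phi\|_{L^\infty}$ times a trace norm of $\eta^\epsilon_z\pi$. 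Moving the nonnegative quantities $\tfrac12\int(|\eta^\epsilon_x|^2+|\eta^\epsilon_y|^2)\pi^2$ and $\tfrac\epsilon2\int|\eta^\epsilon_z|^2\pi^2$ to the left and absorbing, through Young's inequality and the $L^\infty$ bound, the cross terms coming from the drift coefficients and from differentiating $\pi^2$ — this is what dictates taking $p$ (and $q$) large enough relative to the drift — yields (ii). Estimate (iii) follows from the same computation localised away from $y=0$: with a cutoff $\zeta(y)$, $\zeta\equiv1$ on $\{|y|>\delta\}$, supported in $\{|y|>\delta/2\}$, one tests with $\zeta^2(\eta^\epsilon-g)$, where $g$ is a fixed smooth lifting of the boundary data — for instance the function \eqref{chap6:p19}, in which $\beta^+$ is smooth in the interior by elliptic regularity for \eqref{chap6:betaplus} since $\phi$ is smooth; because $\eta^\epsilon-g$ vanishes at $y=\pm\bar{y}_1$ those faces again drop out.

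\emph{Passing to the limit.} The interior equation is immediate: for $v\in\mathcal{C}_c^\infty(\Delta_1)$, writing $\int\eta^\epsilon A^{\ast}v\,+\,\tfrac\epsilon2\int\eta^\epsilon v_{zz}=0$ and letting $\epsilon\to0$, the second term is $\tfrac\epsilon2\,O(1)$ by the $L^\infty$ bound, so $A\eta=0$ in $\mathcal{D}'(\Delta_1)$. For the boundary conditions one tests $a(\eta^\epsilon,v)=0$ against $v\in\mathcal{H}$, integrates by parts to reach the identity displayed in the proof of Proposition~\ref{chap6:prop:p26}, and passes to the limit with the help of (ii)--(iii): the $\epsilon$-terms vanish, leaving the natural conditions $\eta_x=0$ on $x=\pm L$ and $\eta_z=0$ on the free parts of $z=\pm Y$. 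The inclusions on the plastic faces pass because $\chi^{+,\epsilon}:=\eta^\epsilon(\cdot,\cdot,Y)-\beta^+$ lies in the convex, $H^1$-closed (hence weakly closed) set $K_{Y,\|\phi\|}$ and, by (iii) together with the boundedness of $\{\chi\pi:\chi\in K_{Y,M}\}$ in $H^1$ noted earlier, is $H^1$-bounded there, so has a weak limit in $K_{Y,\|\phi\|}$ which, by uniqueness of limits, equals the trace $\eta(\cdot,\cdot,Y)-\beta^+$; likewise $\eta(\cdot,\cdot,-Y)\in K_{-Y,\|\phi\|}$. Finally, by (iii) the traces of $\eta^\epsilon$ on $\{y=\pm\bar{y}_1\}$ are well defined and converge weakly in $L^2$ to those of $\eta$, so $\eta(x,\bar{y}_1,z)=\phi(x,z)$ and $\eta(x,-\bar{y}_1,z)=0$.

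\emph{Main obstacle.} I expect the crux to be the uniform weighted bound (ii), and within it the control of the two surface integrals produced on $z=\pm Y$ by the regularisation $\tfrac\epsilon2\partial_{zz}$ on the portions $\{y>0,\ z=Y\}$ and $\{y<0,\ z=-Y\}$, where $\eta^\epsilon_z$ is not prescribed: showing that these terms are negligible is exactly what forces the weight to carry the factor $y^p$ vanishing at $y=0$ and requires a trace inequality combined with the $L^\infty$ bound (and, if needed, the variational equations \eqref{chap6:pKYM}--\eqref{chap6:pKmYM} satisfied by the plastic traces). A secondary, essentially bookkeeping, difficulty is that ``$\eta\pi\in H^1$'' is a weak notion of solution, so one must check that every boundary condition is read in a sense stable under the available (weighted $H^1$, local $H^1$, weak-$\star$) convergences — variationally on $x=\pm L$ and on $z=\pm Y$, through $K_{\pm Y,\|\phi\|}$ on the plastic faces, and as $L^2$ traces on $y=\pm\bar{y}_1$ — and that the degenerate variable $z$ enters $A\eta=0$ only through the first-order term $y\eta_z$, which is a legitimate distribution once $\eta$ is controlled.
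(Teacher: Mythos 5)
Your overall strategy (pass to the limit in the family $\eta^\epsilon$ of Proposition \ref{chap6:prop:p26}, using $\epsilon$-uniform estimates, then identify the equation, the Neumann condition, the plastic-face inclusions through the weakly closed convex sets $K_{\pm Y,\|\phi\|}$, and the Dirichlet traces at $y=\pm\bar y_1$) is the same as the paper's. But there is a genuine gap at the point you yourself identify as the crux: your estimate (ii) does not follow from the computation you propose. Multiplying $A^\epsilon\eta^\epsilon=0$ by $\eta^\epsilon\pi^2$ and integrating produces the $z$-derivative only through the regularisation term, i.e.\ it yields
\[
\frac{\epsilon}{2}\int_{\Delta_1}(\eta^\epsilon_z\pi)^2+\frac12\int_{\Delta_1}(\eta^\epsilon_x\pi)^2+\frac12\int_{\Delta_1}(\eta^\epsilon_y\pi)^2\le C ,
\]
because the transport term $y\eta^\epsilon_z$ tested against $\eta^\epsilon\pi^2$ integrates out to a surface term, exactly as you note. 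This degenerates as $\epsilon\to0$ and gives no control of $\partial_z(\eta^\epsilon\pi)=\eta^\epsilon_z\pi$ in $L^2$ uniformly in $\epsilon$; yet that bound is indispensable both for the conclusion $\eta\pi\in H^1(\Delta_1)$ demanded by the statement and for the trace identifications in your third paragraph. (If instead you read your (ii) as containing the full $H^1$ norm of $\eta^\epsilon\pi$, then it is simply not what your energy identity delivers.)

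The missing idea is the paper's choice of test function: one tests \eqref{chap6:EPSILON} with $\eta^\epsilon_z\,y^{2p-1}\theta^{2q}$, so that the first-order term $y\eta^\epsilon_z$ itself generates the positive quantity $\int_{\Delta_1}(\eta^\epsilon_z\pi)^2$ on the left, while the second-order terms in $x$ and $y$, after integration by parts in $z$, leave boundary integrals on $z=\pm Y$ of the form $\iint\{(\eta^\epsilon_x)^2+(\eta^\epsilon_y)^2\}\,|y|^{2p-1}\theta^{2q}$ (see \eqref{chap6:uzboundEPSILON}); these are exactly the terms controlled by the weighted $H^1$ bound of Remark \ref{chap6:rem:r1} for traces lying in $K_{\pm Y,\|\phi\|}$ (plus $\beta^+$), and the remaining cross terms are absorbed using Lemma \ref{chap6:lem:l2} and Cauchy--Schwarz. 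This is also why the weight must vanish at $y=0$: it is not the $\epsilon\partial_{zz}$ surface terms that force it, but the fact that the plastic traces are only controlled in weighted $H^1$ near $y=0$. Without this test-function device your scheme stalls at the extraction step, since the sequence $\eta^\epsilon\pi$ is then bounded only in the anisotropic space where the $z$-derivative carries the factor $\sqrt\epsilon$. A secondary imprecision: the traces $\chi^{\pm,\epsilon}$ are only weighted-$H^1$ bounded, so their limits must be identified through the weighted equations (the paper's $\xi^{\pm,\epsilon}=\chi^{\pm,\epsilon}y^2$ argument), not by asserting unweighted $H^1$ boundedness inside $K_{Y,\|\phi\|}$; and the uniqueness claim should be argued from the $L^\infty$ bound/maximum principle rather than from the probabilistic representation, which is what the proposition is meant to justify in the first place.
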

As in the 1d case, the key ingredient of the proof is to bound uniformly the norm of first derivative w.r.t. $z$ using the auxiliary function $\pi$.

\begin{proof} 
From the previous section, we have  $\| \eta^\epsilon \|_{\infty} \leq \| \phi \|_{\infty}$, hence $\eta^\epsilon \to \eta$ in $L^{\infty} \star$. We also have $a(\eta^\epsilon,u_0-\eta^\epsilon) \geq 0$, for some $u_0 \in K$. So, we deduce estimates in the following lemma: (see proof in Appendix)
 
\begin{lemma}\label{chap6:lem:l2}
We have
\begin{equation}\nonumber
\epsilon \int_{\Delta_1} (\eta_z^\epsilon)^2 \d x \d y \d z \leq C ; \quad \int_{\Delta_1} (\eta_x^\epsilon)^2 \d x \d y \d z \leq C   ; \quad \int_{\Delta_1} (\eta_y^\epsilon)^2 \d x \d y \d z \leq C. 
\end{equation}
\end{lemma}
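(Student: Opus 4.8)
The plan is to exploit the variational characterisation $a(\eta^\epsilon,v-\eta^\epsilon)\ge 0,\ \forall v\in K$, tested against the explicit fixed element $u_0\in K$ exhibited in \eqref{chap6:p19}, whose $H^1(\Delta_1)$ and $L^\infty(\Delta_1)$ norms are controlled independently of $\epsilon$. Taking $v=u_0$ gives at once
\[
a(\eta^\epsilon,\eta^\epsilon)\le a(\eta^\epsilon,u_0),
\]
and the entire estimate is read off from the two sides of this inequality. On the left I would split $a(\eta^\epsilon,\eta^\epsilon)$ into its symmetric second-order part
\[
\mathcal{P}(\eta^\epsilon):=\frac12\int_{\Delta_1}\big(\epsilon(\eta_z^\epsilon)^2+(\eta_y^\epsilon)^2+(\eta_x^\epsilon)^2\big)\,\d x\,\d y\,\d z,
\]
which is exactly the quantity to be bounded, and its first-order part $\int_{\Delta_1}(\beta x+c_0y+kz)\eta_y^\epsilon\eta^\epsilon-\int_{\Delta_1}y\,\eta_z^\epsilon\eta^\epsilon+\int_{\Delta_1}\alpha x\,\eta_x^\epsilon\eta^\epsilon$.

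First I would dispose of this first-order part. Each summand has the form $\int_{\Delta_1}g\,(\partial_i\eta^\epsilon)\,\eta^\epsilon=\tfrac12\int_{\Delta_1}g\,\partial_i\big((\eta^\epsilon)^2\big)$ with $g$ one of the smooth coefficients $\alpha x,\ c_0y,\ \beta x,\ kz,\ y$. Integrating by parts in the relevant variable produces a bulk term $-\tfrac12\int_{\Delta_1}(\partial_i g)(\eta^\epsilon)^2$ and a boundary term $\tfrac12\int_{\partial\Delta_1}g\,(\eta^\epsilon)^2\,n_i$. Since $\Delta_1$ is bounded in all three variables and $\|\eta^\epsilon\|_{L^\infty}\le\|\phi\|_{L^\infty}$ — so that the trace of $(\eta^\epsilon)^2$ on $\partial\Delta_1$ is likewise bounded by $\|\phi\|_{L^\infty}^2$ — every such bulk and boundary integral is bounded by a constant $C$ depending only on $\|\phi\|_{L^\infty}$, the coefficients and the geometry, uniformly in $\epsilon$. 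Hence the left-hand side equals $\mathcal{P}(\eta^\epsilon)$ up to an additive quantity bounded by $C$.

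The main obstacle is the right-hand side $a(\eta^\epsilon,u_0)$, and precisely its term $-\int_{\Delta_1}y\,\eta_z^\epsilon\,u_0$: here $\eta_z^\epsilon$ appears \emph{unweighted}, whereas $\mathcal{P}$ controls only $\epsilon(\eta_z^\epsilon)^2$, so a direct Cauchy--Schwarz estimate would cost a factor $\epsilon^{-1}$ and ruin the uniformity. I would resolve this by integrating by parts in $z$ \emph{before} estimating, moving the derivative onto the smooth fixed function $u_0$:
\[
-\int_{\Delta_1}y\,\eta_z^\epsilon\,u_0=\int_{\Delta_1}y\,\eta^\epsilon\,(u_0)_z-\int_{\partial\Delta_1}y\,\eta^\epsilon\,u_0\,n_z,
\]
both terms being bounded by $C$ thanks to $\|\eta^\epsilon\|_{L^\infty}\le\|\phi\|_{L^\infty}$ and $u_0\in H^1\cap L^\infty$. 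The remaining first-order terms of $a(\eta^\epsilon,u_0)$, namely $\int(\beta x+c_0y+kz)\eta_y^\epsilon u_0+\int\alpha x\,\eta_x^\epsilon u_0$, carry the \emph{controlled} derivatives $\eta_x^\epsilon,\eta_y^\epsilon$, so Young's inequality bounds them by $\delta\int_{\Delta_1}\big((\eta_x^\epsilon)^2+(\eta_y^\epsilon)^2\big)+C_\delta$ for any $\delta>0$.

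It remains to treat the principal part of $a(\eta^\epsilon,u_0)$, i.e. $\tfrac12\int(\epsilon\,\eta_z^\epsilon(u_0)_z+\eta_y^\epsilon(u_0)_y+\eta_x^\epsilon(u_0)_x)$. For the $z$-contribution I would use $\tfrac\epsilon2|\int\eta_z^\epsilon(u_0)_z|\le\tfrac\epsilon4\int(\eta_z^\epsilon)^2+\tfrac\epsilon4\int((u_0)_z)^2$, the second summand being $\le C$ for $\epsilon\le1$ and the first being absorbed into $\tfrac\epsilon2\int(\eta_z^\epsilon)^2$ on the left; the $x$- and $y$-contributions are again absorbed by Young's inequality with a small $\delta$. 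Collecting everything and choosing $\delta<\tfrac14$, the inequality $\mathcal{P}(\eta^\epsilon)\le a(\eta^\epsilon,u_0)+C$ becomes
\[
\frac\epsilon4\int_{\Delta_1}(\eta_z^\epsilon)^2+\Big(\tfrac12-2\delta\Big)\int_{\Delta_1}\big((\eta_y^\epsilon)^2+(\eta_x^\epsilon)^2\big)\le C,
\]
with a strictly positive coefficient in front of each term, which yields the three announced bounds with a constant $C$ independent of $\epsilon$. The delicate point throughout is never to pair an unweighted $z$-derivative of $\eta^\epsilon$ against anything via Young's inequality; such terms must first be integrated by parts onto a factor whose $z$-derivative is already under control (either a bounded function of $\eta^\epsilon$, or the fixed $u_0$).
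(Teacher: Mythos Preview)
Your argument is correct and coincides with the paper's proof: both start from $a(\eta^\epsilon,\eta^\epsilon)\le a(\eta^\epsilon,u_0)$, integrate the dangerous terms $\int y\,\eta^\epsilon\eta_z^\epsilon$ and $\int y\,u_0\,\eta_z^\epsilon$ by parts in $z$ (exploiting $\|\eta^\epsilon\|_{L^\infty}\le\|\phi\|_{L^\infty}$ and $u_0\in H^1$), and absorb the remaining terms via Cauchy--Schwarz/Young. The only cosmetic difference is that the paper leaves the first-order terms in $\eta_x^\epsilon,\eta_y^\epsilon$ paired with $\eta^\epsilon$ or $u_0$ and estimates them directly by Cauchy--Schwarz, arriving at a quadratic inequality $X^2\le cX+d$, whereas you integrate those by parts as well and use Young with a small parameter; both routes yield the same conclusion.
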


It is licit to test (\ref{chap6:EPSILON}) with $\eta^\epsilon_z y^{2p-1} \theta^{2q}$. We have
\[
\int_{\Delta_1} \big ( \frac{\epsilon}{2} \eta_{zz}^\epsilon + \frac{1}{2} \eta_{yy}^\epsilon + \frac{1}{2}\eta_{xx}^\epsilon + y \eta_z^\epsilon - \alpha x \eta^\epsilon_x - (\beta x + c_0 y + k z)\eta_y^\epsilon \big ) \eta_z^\epsilon y^{2p-1} \theta^{2q} = 0.
\]
So, we obtain
\begin{equation}\label{chap6:uzboundEPSILON}
 \begin{array}{rcl}
  & &  \int_{\Delta_1} (\eta_z^\epsilon \pi )^2  \leq\\[3mm] 
  &  & \frac{1}{4} \int_0^L \int_0^{\bar{y}_1} \lbrace (\eta_y^\epsilon(x,y,Y))^2 + (\eta_x^\epsilon(x,y,Y))^2 \rbrace y^{2p-1} \theta^{2q} \d x \d y \\[3mm]
  & + & \frac{1}{4} \int_0^L \int_{-\bar{y}_1}^0 \lbrace (\eta_y^\epsilon(x,y,-Y))^2 + (\eta_x^\epsilon(x,y,-Y))^2 \rbrace | y |^{2p-1} \theta^{2q} \d x \d y\\[3mm]
  & + & \frac{1}{2} \int_{\Delta_1}\eta_y^\epsilon \eta_z^\epsilon \big{(} ((2p-1)y^{2p-2} \theta^{2q} + y^{2p-1} 2q \theta' \theta^{2q-1} ) + 2 (\beta x + c_0 y + kz)y^{2p-1} \theta^{2q} \big{)}   \d x \d y \d z\\[3mm]
  & + &  \int_{\Delta_1} \eta_x^\epsilon \eta_z^\epsilon (\alpha x y^{2p-1} \theta^{2q}) \d x \d y \d z.\\[3mm]
\end{array}
\end{equation}
Moreover, Remark \ref{chap6:rem:r1} allows to bound the two integrals on the boundary that yields the following estimate:
\begin{equation}\nonumber
\begin{array}{l}
\| \eta_z^\epsilon \pi \|_{L^2} \leq C_{\pi}.
\end{array}
\end{equation}

Denoting $v^\epsilon:=\eta^\epsilon \pi$, we have $\| v^\epsilon \|_{H^1(\Delta_1)} \leq \tilde{C}_{\pi}$ so we can extract a weakly converging subsequence $v^\epsilon \to v$ in $H^1(\Delta_1)$ and $v=\eta \pi \in H^1(\Delta_1)$. We can check that $\eta$ satisfies the boundary condition of Problem \ref{chap6:pb1}. First, let us check that
\[
\pi(y) A \eta  = 0  \mbox{ in } H^{-1}(\Delta_1) , \quad  \pi(y) \eta_x(\pm L,y,z)   =  0 \mbox{ in } (H^{\frac{1}{2}}((-L,L)\times(-\bar{y}_1,\bar{y}_1)))'.
\]
As $v^\epsilon \in H^2(\Delta_1)$ and $\pi A^\epsilon \eta^\epsilon = 0$, we have
\[
-A^\epsilon v^\epsilon = f(\eta,\eta_y) \quad \mbox{ in strong sense }
\]
with $f(\eta,\eta_y):=  -\frac{1}{2} \lbrace \pi'' \eta^\epsilon + 2 \pi' \eta_y^\epsilon \rbrace + (c_0y+kz+\alpha x) \pi' \eta^\epsilon $.\\[2mm] 
We obtain that $\forall \phi \in H^1(\Delta_1), \phi(x,y,\pm Y) = 0$ and $\phi(x,\pm \bar{y}_1,z)=0$, 
\[
\frac{\epsilon}{2} \int_{\Delta_1} v^\epsilon_z \phi_z + \frac{1}{2} \int_{\Delta_1} v^\epsilon_y \phi_y  +\frac{1}{2} \int_{\Delta_1} v^\epsilon_x \phi_x +
\int_{\Delta_1} \big ( \alpha x v^\epsilon_x  + (\beta x + c_0y + kz) v^\epsilon_y -yv^\epsilon_z \big ) \phi =  \int_{\Delta_1}  f(\eta^\epsilon,\eta_y^\epsilon) \phi.
\]
Now, when $\epsilon$ goes to $0$, we have
\[
 \frac{1}{2} \int_{\Delta_1} v_y \phi_y  +\frac{1}{2} \int_{\Delta_1} v_x \phi_x +
\int_{\Delta_1} \big ( \alpha x v_x  + (\beta x + c_0y + kz) v_y -yv_z\big ) \phi = \int_{\Delta_1}  f(\eta,\eta_y)  \phi.
\]
We deduce we have in $H^{-1}(\Delta_1)$, firstly $-A v = f(\eta,\eta_y)$ which is equivalent to $\pi A \eta = 0$ and secondly that choice of test function implies 
$\pi \eta_x(\pm L,y,z)=0$ in $(H^{\frac{1}{2}}((-L,L)\times(-\bar{y}_1,\bar{y}_1))'$. \\
Then, we check that 
\[
\eta(x,y,Y)-\beta^+(x,y) \in K_{Y,\| \phi \|} ; \quad \eta(x,y,-Y) \in K_{-Y,\| \phi \|}.
\]
We know that $\eta^\epsilon \in H^{1}(\Delta_1)$, its trace is well defined and satisfies
\[
\begin{array}{l}
\gamma(\eta^\epsilon)(x,y,Y)=\chi^{+,\epsilon} + \beta^+ ; \quad \chi^{+,\epsilon} \in K_{Y, \| \phi \|} ; \quad y>0\\[2mm]
\gamma(\eta^\epsilon)(x,y,-Y)=\chi^{-,\epsilon} ; \quad \chi^{-,\epsilon} \in K_{-Y, \| \phi \|} ; \quad y<0\\[2mm]
\end{array}
\]
with
\begin{equation}
\label{chap6:chipmEPSILON}
\| \chi^{\pm,\epsilon} \|_{L^\infty} \leq \| \phi \|_{L^\infty}.
\end{equation}
We also have $\chi^{-,\epsilon},\chi^{+,\epsilon}$ satisfy respectively (\ref{chap6:p11EPSILON}) and (\ref{chap6:p15EPSILON}) 
\begin{equation}
\begin{array}{l}
\int_{-L}^{L} \int_{0}^{\bar{y}_1} \lbrace \frac{1}{2}(\chi_x^{+,\epsilon} \psi_x + \chi_y^{+,\epsilon} \psi_y) + (\alpha x \chi_x^{+,\epsilon} + (\beta x + c_0 y + kY) \chi_y^{+,\epsilon} )\psi \rbrace \d x \d y=0,\\[3mm]
 \forall \psi \in H^1((-L,L)\times(0,\bar{y}_1)) \mbox{ with } \psi(x,0)=\psi(x,\bar{y}_1)=0.\\[3mm]
\end{array}
\label{chap6:p11EPSILON}
\end{equation}
and
\begin{equation}
\begin{array}{l}
\int_{-L}^{L} \int_{-\bar{y}_1}^{0} \lbrace \frac{1}{2}(\chi_x^{-,\epsilon} \psi_x + \chi_y^{-,\epsilon} \psi_y) + (\alpha x \chi_x^{-,\epsilon} + (\beta x + c_0 y - kY) \chi_y^{-,\epsilon} )\psi \rbrace \d x \d y=0,\\[3mm]
 \forall \psi \in H^1((-L,L)\times(-\bar{y}_1,0)) \mbox{ with } \psi(x,0)=\psi(x,-\bar{y}_1)=0.\\[3mm]
\end{array}
\label{chap6:p15EPSILON}
\end{equation}
First, we study convergence of the sequence $\chi^{\pm,\epsilon}$ and we deduce the PDEs satisfied by $\lim_{\epsilon \to 0 } \chi^{\pm,\epsilon}$.\\
In particular (\ref{chap6:chipmEPSILON}) implies  
\[
\begin{array}{c}
\chi^{+,\epsilon} \to \chi^{+} \mbox{ in } L^2((-L,L)\times(0,\bar{y}_1)) \mbox{ weakly, }\\[2mm]
\chi^{-,\epsilon} \to \chi^{-} \mbox{ in } L^2((-L,L)\times(-\bar{y}_1,0)) \mbox{ weakly. }\\[2mm]
\end{array}
\]
And (\ref{chap6:p11EPSILON}) and (\ref{chap6:p15EPSILON}) imply
\[
\begin{array}{c}
\|  \chi^{+,\epsilon} \pi \|_{H^1} \leq C \quad \mbox{ and } \quad  \chi^{+,\epsilon} \pi  \to  \chi^{+}\pi \mbox{ in } H^1 \mbox{ weakly, }\\[2mm]
\|  \chi^{-,\epsilon} \pi \|_{H^1} \leq C \quad \mbox{ and } \quad  \chi^{-,\epsilon} \pi \to  \chi^{-} \pi \mbox{ in } H^1 \mbox{ weakly. }\\[2mm]
\end{array}
\]
Denote $\xi^{\pm, \epsilon}:= \chi^{\pm,\epsilon} y^2$. From (\ref{chap6:p11EPSILON}), we obtain $B_+ \xi^{+,\epsilon} = 0$ in $(-L,L) \times (-\bar{y}_1,\bar{y}_1)$, in $H^{-1}((-L,L) \times (0,\bar{y}_1))$. Since the operator $B_+$ is strictly elliptic then $\xi^{+,\epsilon} \in H^2((-L,L)\times (0, \bar{y}_1))$. We also have $\xi_x^{+,\epsilon}(\pm L, y)=0$ in $(H^{\frac{1}{2}}(0,\bar{y}_1))'$.
As $\xi^{+,\epsilon} \in H^2(\Delta_1)$ and  $\pi B_+ \chi^{+,\epsilon} =0$, we obtain
\[
-B_+ \xi^{+,\epsilon}  = g(\chi^{+,\epsilon},\chi^{+,\epsilon}_y) \quad \mbox{ in a strong sense }
\]
with $g(\chi^{+,\epsilon},\chi^{+,\epsilon}_y):=  -\chi^{+,\epsilon} \lbrace 1 - 2y(\alpha x + c_0y +kY)  \rbrace - 2 y \chi^{+,\epsilon}_y $. We obtain that $\forall \psi \in H^1((-L,L)\times (0,\bar{y}_1)), \psi(x,0) = \psi(x,\bar{y}_1) =  0$, 
\[
 \int_{-L}^{L} \int_{0}^{\bar{y}_1} \frac{1}{2} ( \xi^{+,\epsilon}_x \psi_x + \xi^{+,\epsilon}_y \psi_y)  + \big ( \alpha x  \xi^{+,\epsilon}_x  + (\beta x + c_0y + kY)  \xi^{+,\epsilon}_y -y \xi^{+,\epsilon}_z \big ) \psi =  \int_{-L}^{L} \int_{0}^{\bar{y}_1}  g(\chi^\epsilon,\chi_y^\epsilon) \psi.
\]
Now, when $\epsilon$ goes to $0$, we have
\[
 \int_{-L}^{L} \int_{0}^{\bar{y}_1} \frac{1}{2} ( \xi^{+}_x \psi_x + \xi^{+}_y \psi_y)  + \big ( \alpha x  \xi^{+}_x  + (\beta x + c_0y + kY)  \xi^{+}_y -y \xi^{+}_z \big ) \psi =  \int_{-L}^{L} \int_{0}^{\bar{y}_1}  g(\chi,\chi_y) \psi.
\]
We deduce that in $H^{-1}((-L,L)\times (0, \bar{y}_1))$, we firstly have $-B_+  \xi^{+} = g( \chi^{+},\chi^+_y)$, which is equivalent to $y^2B_+ \chi^+ = 0$ and secondly that choice of test functions implies 
$y^2 \chi^+_x(\pm L,y)=0$ in $(H^{\frac{1}{2}}((0, \bar{y}_1)))'$. 
To summarize, we have
\[
 \xi^{+}   \in H^1((-L,L)\times(0,\bar{y}_1)) 
 \]
 and
 \[
 \left\{
 \begin{array}{rclll}
 -B_+ \xi^{+} & = & g(\chi^{+},\chi_y^{+}) & \mbox{ in } &  (-L,L)\times(0,\bar{y}_1),\\[2mm] 
 \xi^{+}_x(\pm L,y) & = & 0 & \mbox{ in } & (0,\bar{y}_1), \\[2mm]
 \xi^{+}(x,\bar{y}_1) & = & 0 & \mbox{ in } & (-L,L).\\[2mm]
 \end{array}
 \right.
\]
Hence
\[
\pi \chi^{+}   \in H^1((-L,L)\times(0,\bar{y}_1)), \quad \| \chi^{+} \|_{L^\infty} \leq \| \phi \|_{L^\infty}
\]
and
\begin{equation}
 \left\{
 \begin{array}{rclll}
 \pi B_+ \chi^{+}             & = & 0             &  \mbox{ in } &  (-L,L)\times(0,\bar{y}_1), \\[2mm] 
 \pi \chi^{+}_x(\pm L,y) & = & 0  & \mbox{ in } & (0,\bar{y}_1),\\[2mm]
 \chi^{+}(x,\bar{y}_1)    & = & 0     & \mbox{ in } & (-L,L).\\[2mm]
 \end{array}
 \right.
 \label{chap6:xik+}
\end{equation}
Similarly, we have 
\[
\pi \chi^{-}   \in H^1((-L,L)\times(-\bar{y}_1,0)), \quad \| \chi^{-} \|_{L^\infty} \leq \| \phi \|_{L^\infty}
\]
and
\begin{equation}
 \left\{
 \begin{array}{rclll}
 \pi B_+ \chi^{-} & = & 0 & \mbox{ in } &  (-L,L)\times(-\bar{y}_1,0), \\[2mm] 
 \pi \chi^-_x(\pm  L,y)& = & 0 & \mbox{ in } & (-\bar{y}_1,0),\\[2mm]
 \pi \chi^{-}(x,-\bar{y}_1)& = & 0 & \mbox{ in } & (-L,L).\\[2mm]
\end{array}
 \right.
 \label{chap6:xik-}
\end{equation}
First, $\gamma(\pi \eta^\epsilon) \to \pi (\chi^{+} + \beta^+)$ in $H^1((-L,L)\times (0,\bar{y}_1))$ weakly. Secondly, the weak convergence of $\pi \eta^\epsilon \to \pi \eta$ in $H^1(\Delta_1)$ implies the weak convergence of $\gamma(\pi \eta^\epsilon) \to \gamma(\pi \eta)$ in $H^{\frac{1}{2}}( \partial \Delta_1)$. By uniqueness of the limit, we deduce $\gamma(\pi \eta) = \pi (\chi^{+} + \beta^+)$.
Finally, we verify that
\[
\eta(x,\bar{y}_1,z)=\phi(x,z) ; \quad \eta(x,\bar{y}_1,z)=0.
\]
Using Green formula, we obtain
\[
\forall \psi \in H^1(\Delta_1), \quad \int_{\Delta_1} \eta_y^\epsilon \psi + \int_{\Delta_1} \eta^\epsilon \psi_y = \int_{\partial \Delta_1} \phi \psi \vec{n}(y) \d \sigma. 
\] 
Now, we can let $\epsilon$ tend to $0$, we obtain
\[
\forall \psi \in H^1(\Delta_1), \quad \int_{\Delta_1} \eta_y \psi + \int_{\Delta_1} \eta \psi_y = \int_{\partial \Delta_1} \phi \psi \vec{n}(y) \d \sigma. 
\]
\end{proof}
\subsubsection{Approximation (part 3)}
Now, $\phi \in L^\infty(\partial \Delta_1)$, we introduce a sequence of function $ \lbrace \phi^k , k \geq 0 \rbrace \subset H^1(\partial \Delta_1)$ such that $\phi^k \to \phi$ in $L^2(\partial \Delta_1)$. We denote $\eta^k$ the solution of the Problem \ref{chap6:EPSILON_LIMIT} with $\phi^k$ as boundary condition. From the previous section we have $\eta^k \in  L^{\infty}(\Delta_1)$ satisfies $\pi(y)\eta^k \in H^1(\Delta_1)$,
\begin{equation}\nonumber
A \eta^k  = 0  \mbox{ in } \Delta_1, \quad  \eta^k(x,y,Y) - \beta^{k,+}(x,y) \in K_{Y,\| \phi \|}, \quad \eta^k(x,y,-Y) \in K_{-Y,\| \phi \| }
\end{equation}
and
\begin{eqnarray*}
\begin{array}{rclcl}
\eta_x^k(\pm L,y,z) & = & 0 & \mbox{ in } & (y,z) \in (-\bar{y}_1,\bar{y}_1) \times (-Y,Y),\\ 
\eta^k(x,\bar{y}_1,z) & =& \phi^k(x,z) & \mbox{ in } &  (x,z) \in (-L,L) \times (-Y,Y),\\ 
\eta^k(x,-\bar{y}_1,z) & =&  0 & \mbox{ in } &  (x,z) \in (-L,L) \times (-Y,Y). 
\end{array} 
\end{eqnarray*} 
where $\beta^{k,+}(x,y) \in H^1((-L,L) \times (0,\bar{y}_1))$ solves the problem \eqref{chap6:betaplus} with $\phi^k(x,Y)$ as boundary condition.
Moreover  $\| \eta^k \|_{\infty} \leq \| \phi \|_{\infty}$ and $\| \beta^k \|_{\infty} \leq \| \phi \|_{\infty}$. Let us check that the sequence $\beta^k$ has a limit.
\begin{proposition}
\label{chap6:prop:betaplusk}
We have
\begin{align*}
& \beta^{k,+}(x,y) \to \beta^+ \mbox{ in } L^2((-L,L) \times (0,\bar{y}_1))\mbox{ weakly},\\
& \pi \beta^{k,+}(x,y) \to \pi \beta^+ \mbox{ in } H^1((-L,L) \times (0,\bar{y}_1)) \mbox{weakly}
\end{align*}
and the limit $\beta^+$ solves the problem \eqref{chap6:betaplus} with $\phi(x,Y)$ as boundary condition.
\end{proposition}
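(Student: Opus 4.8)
The plan is to run the same three-step argument just used for the sequence $\eta^k$, but now applied to the simpler mixed Dirichlet--Neumann problem \eqref{chap6:betaplus} on the rectangle $D_Y^+ = (-L,L)\times(0,\bar{y}_1)$, exploiting the fact that here the equation is genuinely elliptic (the operator $B_+$ has nondegenerate principal part $\tfrac12(\partial_{xx}+\partial_{yy})$) so no auxiliary weight $\pi$ is needed for the interior estimate; the weight enters only to control the trace/derivative near $y=0$, exactly as in Proposition~\ref{chap6:prop:etaepsilon}. First I would record the two uniform bounds already available: from the maximum-principle argument in the proof of Proposition~\ref{chap6:prop:betaplus} (the $(\xi_\lambda - C)^+$ test-function trick) we get $\|\beta^{k,+}\|_{L^\infty(D_Y^+)}\le \|\phi\|_{L^\infty}$, uniformly in $k$; and testing the variational formulation of \eqref{chap6:betaplus} against $\beta^{k,+} - \beta^{k,+}_0$ for the fixed lifting $\beta^{k,+}_0(x,y) = \tfrac{y}{\bar{y}_1}\phi^k(x,Y)$ (whose $H^1$ norm is bounded because $\phi^k\to\phi$ in $L^2(\partial\Delta_1)$, so $\phi^k(x,Y)$ is bounded in $L^2(-L,L)$ — here one should be slightly careful, see below) gives $\|\pi\beta^{k,+}\|_{H^1(D_Y^+)}\le C$. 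These two bounds let me extract a subsequence with $\beta^{k,+}\to\beta^+$ weakly in $L^2(D_Y^+)$ and weakly-$\star$ in $L^\infty$, and $\pi\beta^{k,+}\to\pi\beta^+$ weakly in $H^1(D_Y^+)$, with $\|\beta^+\|_{L^\infty}\le\|\phi\|_{L^\infty}$.

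Next I would pass to the limit in the weak formulation. Set $\sigma^k := \beta^{k,+}\,y^2$ (mirroring the device $\xi^{\pm,\epsilon} = \chi^{\pm,\epsilon}y^2$ used above): since $B_+$ is strictly elliptic, $B_+\beta^{k,+}=0$ in $H^{-1}$ upgrades to $\sigma^k\in H^2(D_Y^+)$ with $\sigma^k_x(\pm L,y)=0$, and $-B_+\sigma^k = g(\beta^{k,+},\beta^{k,+}_y)$ in the strong sense with $g(\beta^{k,+},\beta^{k,+}_y) = -\beta^{k,+}\{1 - 2y(\alpha x + c_0 y + kY)\} - 2y\,\beta^{k,+}_y$ — exactly the computation performed for $\xi^{+,\epsilon}$. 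Writing this in variational form against test functions $\psi\in H^1(D_Y^+)$ with $\psi(x,0)=\psi(x,\bar{y}_1)=0$, every term is linear in $\sigma^k$ or in $\beta^{k,+},\beta^{k,+}_y$ weighted by $y$ or $y^2$, hence passes to the limit under the weak convergences just obtained; the limit identity says $\pi B_+\beta^+ = 0$ in $H^{-1}(D_Y^+)$ and $\pi\beta^+_x(\pm L,y)=0$ in $(H^{1/2}(0,\bar{y}_1))'$, and near $y=\bar{y}_1$, where $\pi$ does not vanish, this is the genuine equation $B_+\beta^+=0$ together with the Neumann condition. Strict ellipticity of $B_+$ then bootstraps local regularity in any subrectangle $(-L,L)\times(\delta,\bar{y}_1)$, so $\beta^+$ is a classical solution of the PDE and of the Neumann condition away from $y=0$.

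Finally I would recover the two Dirichlet conditions. For $\beta^+(x,\bar{y}_1)=\phi(x,Y)$: by the Green-formula trick used at the end of Proposition~\ref{chap6:prop:etaepsilon}, for all $\psi\in H^1(D_Y^+)$ one has $\int_{D_Y^+}\beta^{k,+}_y\psi + \int_{D_Y^+}\beta^{k,+}\psi_y = \int_{\partial D_Y^+}\phi^k(\cdot,Y)\,\psi\,\vec n(y)\,d\sigma$ wherever the trace is defined; letting $k\to\infty$ (using $\phi^k(\cdot,Y)\to\phi(\cdot,Y)$ in $L^2(-L,L)$) gives the same identity for $\beta^+$, which identifies the trace at $y=\bar{y}_1$ as $\phi(x,Y)$. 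For $\beta^+(x,0)=0$: since $\pi\beta^{k,+}\to\pi\beta^+$ weakly in $H^1(D_Y^+)$, the traces on $\{y=0\}$ converge weakly in $H^{1/2}(-L,L)$; each $\pi\beta^{k,+}$ vanishes there because $\pi(0)=0$, so $\pi\beta^+$ has zero trace at $y=0$, and since $\beta^+\in L^\infty$ is, by the interior regularity above, continuous up to $y=0$ (the boundary condition $\beta^{k,+}(x,0)=0$ already forces this along the approximating sequence), we conclude $\beta^+(x,0)=0$. Uniqueness for \eqref{chap6:betaplus} with datum $\phi(\cdot,Y)$ is Proposition~\ref{chap6:prop:betaplus}, so the whole sequence $\beta^{k,+}$ (not merely a subsequence) converges to this unique $\beta^+$.

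\medskip

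The main obstacle is the integrability of the boundary data $\phi^k(\cdot,Y)$: we only assume $\phi^k\to\phi$ in $L^2(\partial\Delta_1)$, which does \emph{not} by itself make $\{\phi^k(\cdot,Y)\}$ bounded in $H^{1/2}(-L,L)$, yet Proposition~\ref{chap6:prop:betaplus} required $\phi(\cdot,Y)\in H^1(-L,L)$ to even produce $\beta^{k,+}$. Two things need care here. First, one must check that the $\phi^k$ can (and should) be chosen so that each restriction $\phi^k(\cdot,Y)$ lies in $H^1(-L,L)$ — this is harmless since $H^1$ traces are dense — so that $\beta^{k,+}$ is well defined by Proposition~\ref{chap6:prop:betaplus}. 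Second, and more delicately, the $H^1(D_Y^+)$ bound on $\beta^{k,+}_0$ (hence the claimed $H^1$ bound before inserting $\pi$) is \emph{not} uniform in $k$; this is precisely why the statement only asserts $\pi\beta^{k,+}$ bounded in $H^1$ and $\beta^{k,+}$ bounded merely in $L^2$. So the energy estimate must be run in weighted form from the start: test the equation for $\beta^{k,+}$ against $(\beta^{k,+}-\beta^{k,+}_0)\pi^2$-type quantities (or equivalently estimate $\sigma^k=\beta^{k,+}y^2$ directly in $H^1$ using the strong form $-B_+\sigma^k=g$), absorbing the $L^\infty$ bound, so that the only $k$-dependent input is $\|\phi^k(\cdot,Y)\|_{L^2(-L,L)}$, which \emph{is} bounded. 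Getting this bookkeeping right — tracking exactly which norms are uniform in $k$ and which are not, and making sure every limit passage uses only the uniform ones — is the crux of the proof.
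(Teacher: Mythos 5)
Your proposal follows essentially the paper's route: the uniform $L^\infty$ bound, a weighted energy estimate obtained by testing the weak form of \eqref{chap6:betaplus} with a $\pi$-weighted multiple of $\beta^{k,+}$ (admissible because $\pi(0)=\pi(\bar y_1)=0$), extraction of the two weak limits, a limit passage in the variational identity for a multiplied unknown (the paper works with $\gamma^k=\pi\beta^{k,+}$, for which $B_+\gamma^k$ equals lower-order terms as in \eqref{chap6:gammakconv2}; your $\sigma^k=y^2\beta^{k,+}$ plays the same role), and a Green-formula identity to identify the Dirichlet trace. Your closing discussion of which norms are uniform in $k$ (only $\|\phi^k(\cdot,Y)\|_{L^2}$ and the weighted $H^1$ norm, not the plain $H^1$ norm of a lifting) matches the paper's bookkeeping. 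One slip: since $\pi(y)=y^p\theta(y)^q$ with $\theta(\pm\bar y_1)=0$, the weight vanishes at $y=\bar y_1$ as well, so the weighted identity yields the genuine equation and Neumann condition only where $\pi\neq0$, i.e.\ in the open strip $0<y<\bar y_1$; that is all \eqref{chap6:betaplus} asks for, but the phrase ``near $y=\bar y_1$, where $\pi$ does not vanish'' is wrong as stated.

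The genuine gap is your recovery of $\beta^+(x,0)=0$. The step ``each $\pi\beta^{k,+}$ vanishes at $y=0$ because $\pi(0)=0$, hence $\pi\beta^+$ has zero trace there'' is vacuous: with $\pi(0)=0$ this holds for \emph{every} bounded function and carries no information about $\beta^+(x,0)$. The supporting claim that $\beta^+$ is continuous up to $y=0$ does not follow from interior ellipticity (your estimates degenerate exactly at $y=0$), and a boundary value is not transferred by weak $L^2$ convergence of $\beta^{k,+}$. The repair is already contained in your own Green-formula step, and it is exactly how the paper concludes via \eqref{chap6:limy} with test functions $\psi\in H^2\cap H^1_0$: in the identity for $\beta^{k,+}$ the boundary integral carries only the data, namely $\phi^k(\cdot,Y)$ on $y=\bar y_1$ and $0$ on $y=0$ (because $\beta^{k,+}(x,0)=0$), so after letting $k\to\infty$ and comparing with the generic Green formula for $\beta^+$ one reads off \emph{both} $\beta^+(x,\bar y_1)=\phi(x,Y)$ and $\beta^+(x,0)=0$ at once, with no continuity-up-to-the-boundary claim needed. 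With that substitution your argument aligns with the paper's proof.
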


\begin{proof}
We have $\forall \psi \in H^1((-L,L) \times (0,\bar{y}_1))$ with $\psi(x,0)=\psi(x,\bar{y}_1)=0$,
\begin{equation}\label{chap6:betakconv}
 \frac{1}{2} \iint \big{(}  \beta^{k,+}_x \psi_x + \beta^{k,+}_y \psi_y \big{)} + \iint \big{(} \alpha x \beta^{k,+}_x + (c_0y+kY+\beta x) \beta^{k,+}_y \big{)}\psi  =0. 
\end{equation}
In particular, the choice of $\psi = \pi(y) \beta^{k,+}$ with $\pi(0)=\pi(\bar{y}_1)=0$ gives 
\[
 \frac{1}{2} \iint \big{(}  (\pi \beta^{k,+}_x)^2 + (\pi \beta^{k,+}_y)^2 \big{)} + \iint \pi(y) \pi'(y) \beta^{k,+}_y \beta^{k,+} +   \iint \big{(} \alpha x \beta^{k,+}_x + (c_0y+kY+\beta x ) \beta^{k,+}_y \big{)} \pi^2 \beta^{k,+} =0. 
\]
This implies 
\[
\iint (\pi \beta^{k,+}_x)^2 \leq C_{\pi} \quad  \mbox{ and } \quad \iint (\pi \beta^{k,+}_y)^2 \leq C_{\pi}.
\]
We deduce that we can extract a subsequence such that we have
\[
 \beta^{k,+} \to  \beta^{+} \mbox{ in } L^2 \mbox{ weakly } \quad \mbox{ and } \quad   \pi \beta^{k,+} \to  \pi \beta^{+} \mbox{ in } H^1  \mbox{ weakly}.
\]
Now, denote $\gamma^k:= \pi(y) \beta^{k,+}$. We have $B_+ \gamma^k = - \beta^{k,+} (\frac{\pi''}{2}- (\beta x + c_0 y + k Y) \pi' ) - \beta_y^{k,+} \pi'$.
Also $\forall \psi \in H^1((-L,L) \times (0,\bar{y}_1))$ with $\psi(x,0)=\psi(x,\bar{y}_1)=0$,
\begin{align}\label{chap6:gammakconv2}
 \frac{1}{2} \iint \big{(}  \gamma^k_x \psi_x + \gamma^k_y \psi_y \big{)} & + \iint \big{(} \alpha x \gamma^k_x + (c_0y+kY+\beta x) \gamma^k_y \big{)}\psi\\
  & = \iint \lbrace - \beta^{k,+} (\frac{\pi''}{2}- (\beta x + c_0 y + k Y) \pi' ) - \beta_y^{k,+} \pi' \rbrace \psi. \nonumber
\end{align}
When $k$ goes to $+\infty$ in (\ref{chap6:gammakconv2}), we obtain
\[
B_+ \beta^+ = 0 ; \quad \beta_x^+(\pm L, y) = 0.  
\]
Then, from (\ref{chap6:limy}),
\begin{equation}\label{chap6:limy}
\forall \psi \in H^2\cap H_0^1, \quad \iint \beta^{k,+} \psi_{yy} = -\iint \beta_y^{k,+} \psi_y + \int \phi^k \psi_y \vec{n}(x) \d \sigma,  
\end{equation}
we deduce taking limit when $k$ goes to $+\infty$
\[
\forall \psi \in H^2\cap H_0^1, \quad \iint \beta^{+} \psi_{yy} = -\iint \beta_y^{+} \psi_y + \int \phi \psi_y \vec{n}(x) \d \sigma,  
\]
and we obtain the Dirichlet boundary condition
\[
 \beta^{+}(x,\bar{y}_1) = \phi(x,Y) , \quad \beta^{+}(x,0)= 0.
\] 
\end{proof}
\begin{theorem}\label{chap6:prop:etaklimit}
The Problem \ref{chap6:pb1} has a unique solution.
\end{theorem}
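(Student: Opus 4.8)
By Proposition~\ref{chap6:formulation2} it is enough to solve the reformulated problem, with the non-local boundary conditions encoded by the convex sets $K_{\pm Y,\|\phi\|}$. The plan is to recover its solution as the limit, as $k\to\infty$, of the sequence $\eta^k$ built in Approximation (part~3), whose boundary data $\phi^k$ converge to $\phi$ in $L^2(\partial\Delta_1)$. By Proposition~\ref{chap6:prop:etaepsilon} each $\eta^k$ satisfies $\|\eta^k\|_{L^\infty(\Delta_1)}\le\|\phi\|_{L^\infty}$ and $\pi\eta^k\in H^1(\Delta_1)$; redoing the weighted energy estimate of the proof of Proposition~\ref{chap6:prop:etaepsilon} (test the equation with $\eta^k_z\,y^{2p-1}\theta^{2q}$, and absorb the two boundary integrals by Remark~\ref{chap6:rem:r1} together with the uniform bound on $\pi\beta^{k,+}$ from Proposition~\ref{chap6:prop:betaplusk}) gives $\|\pi\eta^k\|_{H^1(\Delta_1)}\le C$ with $C$ independent of $k$. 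Hence, along a subsequence, $\eta^k\to\eta$ weakly-$\star$ in $L^\infty(\Delta_1)$, $\pi\eta^k\rightharpoonup\pi\eta$ in $H^1(\Delta_1)$, and $\pi\eta^k\to\pi\eta$ strongly in $L^2(\Delta_1)$; in particular $\|\eta\|_{L^\infty}\le\|\phi\|_{L^\infty}$ and $\pi\eta\in H^1(\Delta_1)$.

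I would then pass to the limit term by term, exactly as in the proof of Proposition~\ref{chap6:prop:etaepsilon}. With $v^k:=\pi\eta^k$ and the identity $-Av^k=f(\eta^k,\eta^k_y)$, the weak $H^1$ convergence of $v^k$ and the strong $L^2$ convergence of $\eta^k$ give $\pi A\eta=0$ in $H^{-1}(\Delta_1)$ and $\pi\,\eta_x(\pm L,y,z)=0$ in $(H^{1/2}((-L,L)\times(-\bar y_1,\bar y_1)))'$; since $\pi$ vanishes only on $\{y=0\}\cup\{y=\pm\bar y_1\}$ this yields, as in the 1d case, the interior equation $A\eta=0$ in $\Delta_1$ and the Neumann condition $\eta_x(\pm L,y,z)=0$. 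The Dirichlet conditions $\eta(x,\bar y_1,z)=\phi(x,z)$ and $\eta(x,-\bar y_1,z)=0$ follow by passing to the limit in the Green identity $\int_{\Delta_1}\eta^k_y\psi+\int_{\Delta_1}\eta^k\psi_y=\int_{\partial\Delta_1}\phi^k\psi\,\vec n(y)\,\d\sigma$ and using $\phi^k\to\phi$ in $L^2(\partial\Delta_1)$.

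The delicate point is the non-local boundary conditions. The traces satisfy $\gamma(\eta^k)(x,y,Y)=\chi^{k,+}+\beta^{k,+}$ for $y>0$ and $\gamma(\eta^k)(x,y,-Y)=\chi^{k,-}$ for $y<0$, with $\|\chi^{k,\pm}\|_{L^\infty}\le\|\phi\|_{L^\infty}$ and with $\chi^{k,\pm}$ satisfying the weak elliptic identities defining $K_{\pm Y,\|\phi\|}$. Testing those identities with $\pi^2\chi^{k,\pm}$ bounds $\|\pi\chi^{k,\pm}\|_{H^1}$ uniformly in $k$, and the argument of Approximation (part~2) applied to $\xi^{k,\pm}:=\chi^{k,\pm}y^2$ — which solves the strictly elliptic equation $-B_\pm\xi^{k,\pm}=g(\chi^{k,\pm},\chi^{k,\pm}_y)$ and hence lies in $H^2$ — shows, letting $k\to\infty$, that $\pi\chi^{\pm}\in H^1$, $\|\chi^{\pm}\|_{L^\infty}\le\|\phi\|_{L^\infty}$, $\pi B_\pm\chi^{\pm}=0$, $\pi\chi^{\pm}_x(\pm L,\cdot)=0$, $\chi^+(x,\bar y_1)=0$ and $\chi^-(x,-\bar y_1)=0$; i.e.\ $\chi^\pm\in K_{\pm Y,\|\phi\|}$. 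Combining this with Proposition~\ref{chap6:prop:betaplusk} (which gives $\pi\beta^{k,+}\rightharpoonup\pi\beta^+$ in $H^1$ and identifies $\beta^+$) and with the weak $H^{1/2}(\partial\Delta_1)$ convergence of $\gamma(\pi\eta^k)$, uniqueness of the limit forces $\gamma(\pi\eta)(x,y,Y)=\pi(\chi^++\beta^+)$ and $\gamma(\pi\eta)(x,y,-Y)=\pi\chi^-$, that is $\eta(x,y,Y)-\beta^+(x,y)\in K_{Y,\|\phi\|}$ and $\eta(x,y,-Y)\in K_{-Y,\|\phi\|}$. The interior continuity $\eta\in\mathcal{C}^0(\Delta_1^\epsilon)$, $\forall\epsilon>0$, then follows from the hypoellipticity of $A$: the fields $\partial_x$, $\partial_y$ together with the bracket of $\partial_y$ with the drift field, which equals $-c_0\partial_y+\partial_z$, span $\mathbb R^3$ at every point, so H\"ormander's condition holds and $\eta\in\mathcal{C}^\infty(\Delta_1)$.

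Finally, uniqueness. If $\eta_1,\eta_2$ both solve Problem~\ref{chap6:pb1} with data $\phi$, then $\eta:=\eta_1-\eta_2$ solves the homogeneous problem ($\phi\equiv0$, $\beta^+\equiv0$). Applying Dynkin's formula to $\eta$ along $(x(t),y(t),z(t))$ up to $\bar\tau_1$ — legitimate since $A\eta=0$ in $\Delta_1$, $B_\pm\eta=0$ on $\Delta_1^\pm$, and the Neumann condition $\eta_x(\pm L,\cdot)=0$ kills the contributions of the reflection terms $\xi^1,\xi^2$ of $x(t)$ at $\pm L$ — and letting $t\to\infty$ (with $\bar\tau_1<\infty$ a.s.) gives $\eta(x,y,z)=\mathbb E_{(x,y,z)}[\eta(x(\bar\tau_1),y(\bar\tau_1),z(\bar\tau_1))]=0$, since $\eta$ vanishes on $\{|y|=\bar y_1\}$; equivalently one may invoke Bony's maximum principle for the degenerate operator $A$ with $\|\eta\|_{L^\infty}\le0$. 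The step I expect to be the main obstacle is the non-local boundary conditions — reconciling the weak $H^{1/2}(\partial\Delta_1)$ limit of $\gamma(\pi\eta^k)$ with the separately extracted limits of $\pi\chi^{k,\pm}$ and $\pi\beta^{k,+}$, and verifying that $\chi^\pm$ genuinely belongs to $K_{\pm Y,\|\phi\|}$ and not merely to its weak $H^1$-closure; the rigorous justification of Dynkin's formula for this degenerate, reflected problem is a second, milder, difficulty.
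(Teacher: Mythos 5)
Your argument reproduces the paper's own proof of this theorem: the same approximation by smooth boundary data $\phi^k\to\phi$, the same uniform weighted estimates obtained by testing with $\eta^k\theta^2$ and with $\eta^k_z y^{2p-1}\theta^{2q}$ (Lemma \ref{chap6:lem:l3}), and the same passage to the limit in the interior equation, the Neumann and Dirichlet conditions, and the non-local boundary conditions through $\chi^{k,\pm}$, $\xi^{k,\pm}=\chi^{k,\pm}y^2$ and Proposition \ref{chap6:prop:betaplusk} (Lemma \ref{chap6:lem:l4}). Your two additions --- interior continuity via H\"ormander's condition where the paper instead invokes its local-regularity estimates, and an explicit uniqueness argument via Dynkin's formula (equivalently a maximum-principle argument using the $L^\infty$ bound), which the paper leaves essentially implicit --- are compatible complements rather than a different route.
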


\begin{proof}
Testing $A \eta^k$ with $\eta \theta^2$, we obtain
\begin{align*}
\frac{1}{2} \iint (\eta_{y}^k \theta)^2 + \frac{1}{2} \iint (\eta_{x}^k \theta)^2 & \leq   \frac{1}{2} \iint (\eta^k)^2 (\theta \theta')' + \frac{1}{2} \iint (c_0 y + kz + \beta x) (\eta^k)^2 (\theta^2)' + \frac{1}{2} \iint \alpha \theta^2 (\eta^k)^2\\
&   - \frac{1}{2} \int \alpha x (\eta^k \theta)^2 \vec{n}(x) d \sigma  + \frac{1}{2} \int y \theta^2 (\eta^k)^2 \vec{n}(z) \d \sigma.
\end{align*}
So, we have 
\[
\iint (\eta_x^k \theta)^2 \leq C ; \quad \iint (\eta_y^k \theta)^2 \leq C.
\]
Testing $(A \eta^k)$ with $\eta^k_z y^{2p-1} \theta^{2q}$, we deduce the following lemma
\begin{lemma}\label{chap6:lem:l3}
We have
\begin{equation}
 \begin{array}{rcl}
\int_{\Delta_1} (\eta_z^k \pi )^2  \leq &  & \frac{1}{4} \int_0^L \int_0^{\bar{y}_1} \lbrace (\eta_y^k(x,y,Y))^2 + (\eta_x^k(x,y,Y))^2 \rbrace y^{2p-1} \theta^{2q} d x d y \\[3mm]
& + &  \frac{1}{4} \int_0^L \int_{-\bar{y}_1}^0 \lbrace (\eta_y^k(x,y,-Y))^2 + (\eta_x^k(x,y,-Y))^2 \rbrace | y |^{2p-1} \theta^{2q} d x d y\\[3mm]
& + &  (p-\frac{1}{2}) \int_{\Delta_1} (\eta^k_y \theta) (\eta^k_z \pi) y^{p-2} \theta^{q-1} + q \int_{\Delta_1} (\eta^k_y \theta)  (\eta^k_z \pi) y^{p-1} \theta^{q-2} \theta'\\[3mm]
& + &  \int_{\Delta_1} (c_0 y + kz + \beta x) (\eta^k_y \theta) (\eta^k_z \pi) y^{p-1} \theta^{q-1}\\[3mm]
& + &  \int_{\Delta_1} \alpha x (\eta^k_x \theta) (\eta^k_z \pi) y^{p-1} \theta^{q}.
\end{array}
  \label{chap6:uzboundk}
 \end{equation}
Moreover, $\| \eta_z^k \pi \|_{L^2} \leq C_{\pi}$.
\end{lemma}

Denoting $v^k:=\eta^k \pi$, we have $\| v^k \|_{H^1(\Delta_1)} \leq \tilde{C}_{\pi}$ so we can extract a weakly converging subsequence $v^k \to v$ in $H^1(\Delta_1)$ and $v=\eta \pi \in H^1(\Delta_1)$. Similarly as before, we can check that $\eta$ satisfies the boundary condition of Problem \ref{chap6:pb1} which is summarized in the following lemma. (see proof in Appendix).
\begin{lemma} \label{chap6:lem:l4}
We have $\pi(y) A \eta  = 0  \mbox{ in } H^{-1}(\Delta_1), \quad \pi(y) \eta_x(\pm L,y,z)   =  0 \mbox{ in } (H^{\frac{1}{2}}((-L,L)\times(-\bar{y}_1,\bar{y}_1)))'$,
$\eta(x,y,Y)-\beta^+(x,y) \in K_{Y,\| \phi \|}, \quad \eta(x,y,-Y) \in K_{-Y,\| \phi \|}, \quad \mbox{and} \quad \eta(x,\bar{y}_1,z)=\phi(x,z) ; \quad \eta(x,\bar{y}_1,z)=0.$
\end{lemma}
\end{proof}

\subsubsection{Local regularity in the interior Dirichlet problem}

In this section, we derive local regularity properties of the function $v$ related to the interior problem. We recall that $v:= \pi \eta \in H^1(\Delta_1)$ and we have 
\begin{equation}\label{chap6:regu1}
-\frac{1}{2}v_{xx} - \frac{1}{2}v_{yy} + \rho_0(x,y,z) v_y + \alpha x v_x - y v_z = \eta \rho_1(x,y,z) + \eta_y \rho_2(y) 
\end{equation}
where we denote
\[
\rho_0(x,y,z) := \beta x + c_0 y + k z, \quad \rho_1(x,y,z) := \frac{\pi''}{2}-\rho_0(x,y,z) \pi', \quad \rho_2(y) = \pi'(y).
\]
Let $\bar{y} < \bar{y}_1$,

\begin{notation}
\begin{eqnarray*}
\Delta_1(\delta) & : = &  \lbrace (x,y,z) \in \Delta_1, \quad |y-\bar{y}_1| > \delta, \quad |y+\bar{y}_1| > \delta \rbrace\\[2mm]
\Delta_1(\delta, \gamma) & := &   \lbrace (x,y,z) \in \Delta_1(\delta), \quad |z-Y| > \gamma \rbrace\\[2mm]
H_1(\delta) & := & H^1(-Y,Y;H^1(-L,L;H^1(-\bar{y} + \delta, \bar{y} - \delta ))) 
\end{eqnarray*}
\end{notation}

Recall that $\eta \in H_1(\delta)$ means $\eta,\eta_x,\eta_y,\eta_z,\eta_{xy},\eta_{xz},\eta_{zy},\eta_{xyz} \in L^2(\Delta_1(\delta))$.

\begin{proposition}
We have
\begin{equation}\label{chap6:reg_int1}
\forall \delta, \gamma >0, \quad  \eta \in H_1(\delta)  ; \quad  \eta \in \mathcal{C}^0(\Delta_1(\delta))  ; \quad  \eta \in H^2(\Delta_1(\delta,\gamma)),
\end{equation}
and
\begin{equation}\label{chap6:reg_int2}
\| \eta \|_{H_1(\delta)} \leq M_1 ; \quad \| \eta \|_{H^2(\Delta_1(\delta,\gamma))} \leq M_2.\\[2mm]
\end{equation}
Moreover, the trace of $\eta$ at $y=\bar{y}$, denoted by $h(x,z) := \eta(x,\bar{y},z)$  satisfies 
\begin{equation}\label{chap6:reg_int3}
h \in H^1((-L,L) \times (-Y,Y) ) \cap \mathcal{C}^0 ((-L,L) \times (-Y,Y) ).
\end{equation}
\end{proposition}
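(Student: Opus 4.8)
The plan is to bootstrap regularity for the function $v=\pi\eta$ starting from the PDE \eqref{chap6:regu1}, which we already know holds in $H^{-1}(\Delta_1)$ together with the Neumann condition $\pi\eta_x(\pm L,y,z)=0$. The operator on the left of \eqref{chap6:regu1} is uniformly elliptic in $(x,y)$ for fixed $z$, but degenerate in $z$ (only a transport term $-yv_z$), so the strategy is: first gain regularity in the elliptic directions $(x,y)$, then exploit the transport structure in $z$ to pick up the mixed $z$-derivatives, and finally cut away the troublesome sets near $y=\pm\bar y_1$ (the Dirichlet boundary, handled by the cutoff $\delta$) and near $z=Y$ (where $\eta$ can jump, handled by the cutoff $\gamma$).

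First I would treat $z$ as a parameter and view \eqref{chap6:regu1} as a family of two-dimensional mixed Dirichlet--Neumann elliptic problems on $(-L,L)\times(-\bar y_1,\bar y_1)$ with right-hand side $F:=\eta\rho_1+\eta_y\rho_2 + yv_z$; since $v\in H^1(\Delta_1)$ and $v_z\in L^2$ (established in Lemma~\ref{chap6:lem:l3}), $F\in L^2(\Delta_1)$. Standard interior/Neumann elliptic estimates for the strictly elliptic operator $-\frac12\partial_{xx}-\frac12\partial_{yy}+\alpha x\partial_x+\rho_0\partial_y$ (e.g.\ difference-quotient arguments in the $x$ direction, using the homogeneous Neumann condition at $x=\pm L$, and localization in $y$ away from $\pm\bar y_1$ via the cutoff $\delta$) give $v\in H^2$ in $(x,y)$ locally in $\Delta_1(\delta)$, in particular $v_x\in H^1_{loc}$ in $(x,y)$, with norm controlled by $\|F\|_{L^2}+\|v\|_{H^1}\le M$. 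Next I would differentiate \eqref{chap6:regu1} formally in $z$ (rigorously via difference quotients in $z$): setting $w:=v_z$, $w$ solves the same elliptic-in-$(x,y)$ equation with right-hand side $\partial_z F + (\text{lower order}) = \rho_1 v_z + \eta_y\cdot 0 + k\pi'\eta + y v_{zz}+\dots$; but rather than chase $v_{zz}$ (which is genuinely not controlled), I only claim the \emph{mixed} derivatives. Concretely, applying the $(x,y)$-elliptic estimate to the $z$-difference quotient $\tau_h^z v$ and using that $\partial_z$ of the coefficients and of $F$ only produces terms already in $L^2$ (because $F$'s $z$-dependence is through $\eta$, $\rho_0$, and $yv_z$, and $v_z\in L^2$), one gets $v_{xz},v_{yz}\in L^2_{loc}$, hence also $v_{xyz}\in L^2_{loc}$ by one more $x$-difference-quotient step. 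This is exactly the content of $\eta\in H_1(\delta)$ once one divides by $\pi$, which is smooth and bounded away from zero on $\Delta_1(\delta)$, and gives the bound \eqref{chap6:reg_int2} for $\|\eta\|_{H_1(\delta)}$.

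The main obstacle is the missing $v_{zz}$ estimate: near $z=Y$ the function $\eta$ need not even be continuous across the plastic boundary, so no amount of interior work in $\Delta_1(\delta)$ alone gives $H^2$ up to $z=\pm Y$. This is why the second cutoff $\gamma$ is introduced. On $\Delta_1(\delta,\gamma)$, i.e.\ staying a distance $\gamma$ from $z=Y$ (and, by symmetry of the argument, from $z=-Y$), one can run a genuine $z$-difference-quotient estimate for the full second derivative: here the transport term $-yv_z$ combined with the ellipticity in $(x,y)$ and the fact that we are in the open region where $A\eta=0$ with no boundary constraint in $z$ lets one test with $\tau^z_{-h}\tau^z_h v\,\theta^2$ and absorb the transport term, yielding $v_{zz}\in L^2(\Delta_1(\delta,\gamma))$ and thus $\eta\in H^2(\Delta_1(\delta,\gamma))$ with the bound $M_2$ in \eqref{chap6:reg_int2}. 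Continuity $\eta\in\mathcal C^0(\Delta_1(\delta))$ then follows from $\eta\in H_1(\delta)$ by the anisotropic Sobolev embedding $H^1(-Y,Y;H^1(-L,L;H^1(\cdot)))\hookrightarrow \mathcal C^0$ in three variables (the tensorized $H^1$ control on all first derivatives and the mixed ones suffices), and on the slab $\{y=\bar y\}$ the trace theorem applied in $\Delta_1(\delta,\gamma)$ for each $\gamma$, glued over $\gamma\to0$ together with the uniform $H_1(\delta)$ bound, gives $h=\eta(\cdot,\bar y,\cdot)\in H^1\cap\mathcal C^0((-L,L)\times(-Y,Y))$, which is \eqref{chap6:reg_int3}. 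Throughout, all constants depend only on $\delta,\gamma,\bar y_1,L,Y$ and the coefficients, as required.
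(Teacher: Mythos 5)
Your architecture (freeze $z$, use slice\-wise elliptic regularity in $(x,y)$, then take difference quotients in $z$, then localize) is genuinely different from the paper's, which works with global \emph{weighted} energy estimates: it differentiates \eqref{chap6:regu1} in $x$ and in $y$ and tests the resulting equations with multipliers of the form $y^{2p-1}\pi^{2q}\,\partial_z(\cdot)$. Your first step (putting $y v_z\in L^2$ on the right-hand side and deducing $v_{xx},v_{xy},v_{yy}\in L^2_{\mathrm{loc}}$) is sound and matches the paper's first appendix estimate. The gap is in your second step, which is the decisive one. When you difference in $z$, the term $-y v_z$ cannot be treated as data: its difference quotient is $-y\,\tau^z_h v_z$, exactly as singular as $v_{zz}$, so it must remain in the operator; any elliptic or energy estimate for $\tau^z_h v$ then forces an integration by parts in $z$, producing boundary integrals over $z=\pm Y$ that involve traces of $v_x,v_y$ (second derivatives of $\eta$) there. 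Your claim that ``$\partial_z$ of the coefficients and of $F$ only produces terms already in $L^2$'' is also not correct, since $\partial_z F$ contains $\rho_2\,\eta_{yz}$, one of the unknowns. Controlling those $z=\pm Y$ boundary terms is the heart of the paper's proof: it uses the $H^1$ regularity of the plastic-boundary data $\pi(\chi^{\pm}+\beta^{+})$ (coming from the two-dimensional elliptic problems encoded in $K_{\pm Y,M}$ and the remark following their definition) together with the \emph{odd} weight $y^{2p-1}$, whose sign ensures that only the quadrants $\{z=Y,\,y>0\}$ and $\{z=-Y,\,y<0\}$ — precisely where that regularity is available — enter with the unfavorable sign. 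Nothing in your outline supplies this, and without it the uniform-in-$h$ bound on $\tau^z_h v$, hence $\eta_{xz},\eta_{yz},\eta_{xyz}\in L^2$ up to $z=\pm Y$, which is what $\eta\in H_1(\delta)$, the continuity statement and the trace statement \eqref{chap6:reg_int3} rest on, does not follow.

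Your $v_{zz}$ step has a second, independent problem: inside $\Delta_1(\delta,\gamma)$ the transport coefficient $-y$ vanishes and changes sign (the set contains $\{y=0\}$), so after testing with $\tau^z_{-h}\tau^z_h v\,\theta^2$ the transport contribution carries the factor $y$ with no definite sign, and there is no ellipticity in the $z$ direction to absorb it; ``absorbing the transport term'' is simply not available. The paper instead reads $y\,\eta_{zz}$ off from the $z$-differentiated equation, which is why it needs the weighted third-derivative bounds ($\eta_{xyz}$, $\eta_{yyz}$, $\rho(z)\eta_{xxz}$ with weights $y^p\pi^q$) listed at the end of its appendix; your argument produces none of these. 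The parts of your proposal that do work — the slice\-wise $(x,y)$ regularity, the embedding of $H^1(-Y,Y;H^1(-L,L;H^1))$ into continuous functions, and the trace argument at $y=\bar y$ (which is away from $y=0$ and from $\pm\bar y_1$, so the $y$- and $\pi$-weights are harmless there) — are fine, but they all sit downstream of the missing mixed-derivative estimates.
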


\begin{proof}
 
The proof relies on the following estimates (see proof in Appendix). 
We have
\[
 \eta_{xx} \pi^2 \in L^2(\Delta_1) ; \quad \eta_{xy}\pi^2 \in L^2(\Delta_1)  ; \quad  \eta_{yy}\pi^2 \in L^2(\Delta_1), 
\]
\[
 \eta_{xz} y^2 \pi^4 \in L^2(\Delta_1) ; \quad \eta_{yz} y^2 \pi^3 \in L^2(\Delta_1).
\]
For $p$ and $q$ large enough, we have
\[
 \eta_{xyz} y^p \pi^q \in L^2(\Delta_1) ; \quad \eta_{yyz } y^p \pi^q \in L^2(\Delta_1); \quad  \rho(z) \eta_{xxz} y^p \pi^q \in L^2(\Delta_1) ; \quad \rho(z) \eta_{zz } y^p \pi^q \in L^2(\Delta_1).
\]
\end{proof}

\section{The exterior Dirichlet problem}
In this section, we prove existence and uniqueness to the homogeneous exterior Dirichlet problem.
\setcounter{equation}{0} 
\subsection{Some background on the exterior Dirichlet problem in the 1d case}
\begin{notation}
\begin{equation*}
 D_d   :=   (- \infty, -\bar{y}) \times (-Y,Y) , \quad D_u  :=  (\bar{y}, +\infty ) \times (-Y,Y),
\end{equation*}
\begin{equation*}
 D   :=   D_d \cup D_u , \quad D^+  :=  (\bar{y}, +\infty ) \times \lbrace Y \rbrace, \quad D^-  :=  (-\infty, -\bar{y}) \times \lbrace -Y \rbrace
\end{equation*}
and
\[
D^\epsilon := D \cap \{ |y| > \bar{y} + \epsilon\}, \epsilon >0.
\]
\end{notation}
Let us recall the exterior Dirichlet problem from \cite{BenTuri1}. Denote $\bar{\tau}:= \inf \{ t > 0,  |y(t)| =  \bar{y} \}$ and consider $h_{\pm} \in L^{\infty}(-Y,Y)$. It is shown that $\mathbb{E}_{(y,z)}(h_{\pm}(z(\bar{\tau})) )$ solves a Dirichlet problem:
Find $\zeta \in L^\infty(D) \cap \mathcal{C}^0(D^\epsilon), \forall \epsilon >0$ such that
\begin{equation}
\label{chap6:p2}
A \zeta  = 0 \mbox{ in } D, \quad  B_{\pm} \zeta = 0, \mbox{ in } D^{\pm} 
\end{equation}
with
\begin{equation*}
\zeta(\pm \bar{y},z)  = h_{\pm}(z) , \quad  -Y < z <Y. 
\end{equation*}
By solving the ordinary differential equation on the boundary, there is 
\[
\zeta_y(y, \pm Y)=K \exp(c_0 y^2 \pm  2 kYy).
\]
As a bounded solution is sought, $K$ must be $0$. Hence, $\zeta(y, \pm Y) = h_{\pm}( \pm Y)$ and then problem \ref{chap6:p2} was recast in \cite{BenTuri1} by
\begin{equation}
\label{chap6:p2r}
A \zeta  = 0 \mbox{ in } D, \quad  \zeta(y, \pm Y) = h_{\pm}( \pm Y), \mbox{ in } D^{\pm} 
\end{equation}
with
\begin{equation*}
\zeta(\pm \bar{y},z)  = h_{\pm}(z) , \quad  -Y < z <Y. 
\end{equation*}

\subsection{The exterior Dirichlet problem in the 2d case} 
\begin{notation}
\begin{equation*}
 \Delta_d   :=   (-L,L) \times (- \infty, -\bar{y}) \times (-Y,Y) , \quad \Delta_u  :=  (-L,L) \times (\bar{y}, +\infty ) \times (-Y,Y)
\end{equation*}
\begin{equation*}
 \Delta   :=   \Delta_d \cup \Delta_u , \quad \Delta^+  :=  (-L,L) \times (\bar{y}, +\infty ) \times \lbrace Y \rbrace, \quad \Delta^-  :=  (-L,L) \times (-\infty, -\bar{y}) \times \lbrace -Y \rbrace
\end{equation*}
and
\[
\Delta_u^\epsilon := \Delta_u \cap \{ |y| > \bar{y} + \epsilon\}, \epsilon >0.
\]
\end{notation}
Due to the symmetry in the exterior Dirichlet problem, we can consider positive values of $y$ only. Denote $\bar{\tau} := \inf \{ t > 0,  y(t) =  \bar{y} \}$ and consider $h \in L^{\infty}((-L,L) \times (-Y,Y))$, we define $\mathbb{E}_{(x,y,z)}(h(x(\bar{\tau}), z(\bar{\tau})))$ as the solution of the exterior Dirichlet Problem \ref{chap6:pb2}:
\begin{problem}
\label{chap6:pb2}
Find $\zeta \in L^\infty(\Delta_u) \cap \mathcal{C}^0(\Delta_u^\epsilon), \forall \epsilon >0$ such that
\begin{equation}\nonumber
A \zeta  = 0  \mbox{ in }  \Delta_u, \quad  B_+ \zeta  = 0   \mbox{ in }  \Delta^+
\end{equation}
and
\begin{eqnarray*}
\begin{array}{rclcl}
\zeta_x(\pm L,y,z) &  = & 0,         &  \mbox{ in } &  (y,z)  \in (\bar{y},+\infty) \times (-Y,Y),\\ 
\zeta(x,\bar{y},z)    &  = & h(x,z), &  \mbox{ in } &  (x,z)  \in (-L,L) \times (-Y,Y).
\end{array} 
\end{eqnarray*}
\end{problem}

\subsubsection{Boundary condition}
Find $\zeta^+ \in L^\infty(\Delta^+)$ such that
\begin{equation}\label{chap6:bc_e2}
B_+ \zeta^+  = 0  \mbox{ in }  \Delta^+,
\end{equation}
and
\begin{eqnarray*}
\begin{array}{rclcl}
\zeta_x^+(\pm L,y,z) &  = & 0         &  \mbox{ in } &  (y,z)  \in (\bar{y},+\infty) \times (-Y,Y),\\ 
\zeta^+(x,\bar{y},z)    &  = & h(x,z) &  \mbox{ in } &  (x,z)  \in (-L,L) \times (-Y,Y).
\end{array} 
\end{eqnarray*}
Define 
\[
H^1_2(\Delta^+) := \left \{   u: \Delta^+ \to \mathbb{R}, \quad \int_{\Delta^+} \frac{u^2 + u_x^2 +u_y^2}{1+y^2} d xd y < \infty \right \}.
\]
\begin{proposition}
Assume that there exists $H \in H_2^1(\Delta^+)$ such that $H(x,\bar{y}) = h(x,Y)$.\\[2mm] 
Then there exists one and only one solution to the problem (\ref{chap6:bc_e2}) with
\[
\zeta^+ \in H^1_2(\Delta^+), \quad \| \zeta^+ \|_{L^\infty} \leq \| h(.,Y) \|_{L^\infty}. 
\]
\end{proposition}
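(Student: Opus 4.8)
The plan is to combine the variational argument used for $\beta^+$ in Proposition~\ref{chap6:prop:betaplus} with an exhaustion of the unbounded domain $\Delta^+$ by bounded rectangles $\Delta^+_R:=(-L,L)\times(\bar y,R)\times\{Y\}$, $R>\bar y$, and then to let $R\to\infty$, the passage to the limit being made possible by the confining character of the $y$-drift $-(\beta x+c_0y+kY)\partial_y$. Observe that on $\Delta^+$ the variable $z=Y$ is frozen, so $B_+$ is a uniformly elliptic operator in $(x,y)$ whose only defect is that its drift is unbounded in $y$; this is the sole source of non-compactness. Set $C:=\|h(\cdot,Y)\|_{L^\infty}$.

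First I would solve, on each bounded domain $\Delta^+_R$, the mixed problem $B_+\zeta^+_R=0$ in $\Delta^+_R$, $\partial_x\zeta^+_R(\pm L,y)=0$, $\zeta^+_R(x,\bar y)=h(x,Y)$, and $\partial_y\zeta^+_R(x,R)=0$ at the artificial boundary. This is handled exactly as in Proposition~\ref{chap6:prop:betaplus}: a G\aa rding inequality makes $b(\cdot,\cdot)+\lambda(\cdot,\cdot)$ coercive on $H^1(\Delta^+_R)$ for $\lambda$ large, the Dirichlet datum at $y=\bar y$ is encoded into a convex set, a contraction map $T_\lambda$ produces the solution, and testing the equation against $\zeta^+_R-(\zeta^+_R-C)^+$ (the Neumann condition at $y=R$ being chosen precisely so that the corresponding boundary term drops out of the truncation identity) yields $\|\zeta^+_R\|_{L^\infty}\le C$ together with uniqueness on $\Delta^+_R$.

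The heart of the matter is a bound on $\|\zeta^+_R\|_{H^1_2}$ that is \emph{uniform in $R$}. I would test the equation for $\zeta^+_R$ against $(\zeta^+_R-H)\,\omega(y)$ with $\omega(y):=(1+y^2)^{-1}$; subtracting the given extension $H\in H^1_2(\Delta^+)$ makes this an admissible test function, since it lies in $H^1(\Delta^+_R)$ and vanishes on $\{y=\bar y\}$. After integration by parts the principal part produces $\tfrac12\int_{\Delta^+_R}\omega|\nabla\zeta^+_R|^2$ modulo terms carrying $\omega'$ and $\nabla H$, while in the first-order term $\int(\beta x+c_0y+kY)\,\partial_y\zeta^+_R\,(\zeta^+_R-H)\,\omega$ one writes $\partial_y\zeta^+_R\cdot\zeta^+_R=\tfrac12\partial_y((\zeta^+_R)^2)$ and integrates by parts once more; the dominant piece $-\tfrac{c_0}{2}\int(\zeta^+_R)^2\,\partial_y(y\omega)$ has, for large $y$, the favourable sign, and everything else is absorbed using $\|\zeta^+_R\|_{L^\infty}\le C$, $\|H\|_{H^1_2}$, and $\int_{\bar y}^{\infty}\omega<\infty$. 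The delicate point I expect here is exactly this: forcing the unbounded drift to cooperate with the weight rather than work against it; should the plain weight fail to close the estimate, I would steepen it slightly, or equivalently incorporate the Ornstein--Uhlenbeck density $e^{-c_0y^2-2kYy}$ (for which $B_+$ is symmetric in the $y$-variable) into the test function.

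Finally I would pass to the limit. After extending each $\zeta^+_R$ to $\Delta^+$ in a bounded $H^1_2$ way, the uniform estimate gives a subsequence converging weakly in $H^1_2(\Delta^+)$ and strongly in $L^2_{\mathrm{loc}}$ to some $\zeta^+$; passing to the limit in the weak formulation gives $B_+\zeta^+=0$ with $\partial_x\zeta^+(\pm L,\cdot)=0$, the Dirichlet condition $\zeta^+(x,\bar y)=h(x,Y)$ is preserved because the trace on $\{y=\bar y\}$ is continuous for the $H^1$ norm near that face, where the estimates are uniform, and $\|\zeta^+\|_{L^\infty}\le C$ follows by weak-$\star$ lower semicontinuity. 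For uniqueness, the difference $w$ of two $H^1_2$ solutions solves the homogeneous problem; interior elliptic regularity makes $w$ bounded on each strip $\{\bar y\le y\le R\}$, and the confining drift furnishes the barrier $\varepsilon(y-\bar y)$, which is a supersolution of $B_+$ for $y$ large (there the term $c_0y$ dominates), so that $\limsup_{y\to\infty}|w|\le\varepsilon$ for every $\varepsilon>0$; since $w$ vanishes on $\{y=\bar y\}$ and is Neumann on the sides, the maximum principle — equivalently the representation $w(x,y)=\mathbb E_{(x,y)}[w(x(\bar\tau),Y)]=0$, the hitting time $\bar\tau$ of $\{y=\bar y\}$ being a.s.\ finite by recurrence of the confined process — forces $w\equiv0$.
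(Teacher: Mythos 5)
Your argument is correct in outline, but it takes a genuinely different route from the paper. For existence the paper never truncates the domain at this stage: it stays on the unbounded $\Delta^+$, replaces the drift coefficient $c_0y$ by the bounded $c_0\frac{y-\bar y}{1+\delta y}+c_0\bar y$, adds a zero--order term $\gamma$ so that the regularized form $a_{\gamma,\delta}$ is coercive uniformly in $\delta$, solves by Lions--Stampacchia on the convex set $K$, lets $\delta\to0$, and finally removes $\gamma$ through the monotone scheme \eqref{chap6:bc_e8} started from the constant $\|h(\cdot,Y)\|_{L^\infty}$, whose decreasing limit solves \eqref{chap6:bc_e2}. Your route --- solve on $(-L,L)\times(\bar y,R)$ as in Proposition~\ref{chap6:prop:betaplus}, prove an $R$-uniform weighted energy bound, extract a weak limit --- is more elementary (one limit instead of the double limit in $\delta$ and $n$, no monotonicity needed), while the paper's route sets up exactly the weighted variational framework it re-uses for the Cauchy problem; note that the paper itself truncates at $y=R$ (with Dirichlet data at $y=R$ and monotone convergence in $R$) in \eqref{chap6:cp_e16}, so your device is consonant with its later steps. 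For uniqueness, your linear barrier $\varepsilon(y-\bar y)$ (or the probabilistic representation) plays the role of the paper's change of unknown $u=\zeta e^{-c_0y^2/2}$, which manufactures a strictly negative zero--order coefficient for $y\ge\bar y$; both are maximum-principle arguments, both need $\bar y$ large and the lateral Neumann condition, and they are of comparable strength.

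One caveat on your key estimate. With the literal weight $\omega(y)=(1+y^2)^{-1}$ the cross term $\int b\,\zeta_{R,y}\,H\,\omega$, $b=\beta x+c_0y+kY$, does not close: Cauchy--Schwarz needs $\int b^2\omega$ or $\int b^2H^2\omega$, which diverge when $H$ is merely in $H_2^1(\Delta^+)$. You must either take the steeper weight $(1+y^2)^{-2}$ (the one that actually appears in the paper's bilinear forms), or replace $H$ by $H\chi(y)$ with $\chi$ supported near $y=\bar y$ (same trace, compact support in $y$), after which every $H$-term lives on a bounded strip; your own hedge about steepening the weight covers this, so it is a fixable detail rather than a gap. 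Be aware, though, that your alternative of the Gaussian weight $e^{-c_0y^2-2kYy}$ would give a bounded solution but not directly the membership $\zeta^+\in H_2^1(\Delta^+)$ asserted in the statement, since that weight is far smaller at infinity than $(1+y^2)^{-1}$; the polynomially weighted bound would have to be recovered afterwards, which the uniform $L^\infty$ bound indeed permits. Finally, the zero--order terms produced by integrating $\int b\,\zeta_{R,y}\zeta_R\,\omega$ by parts are uniformly controlled simply because $\|\zeta_R\|_{L^\infty}\le \|h(\cdot,Y)\|_{L^\infty}$ and $|\partial_y(b\omega)|\le C\omega$ is integrable on $\Delta^+$, so the favourable sign of the confining drift is welcome there but not essential.
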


\begin{proof}
First, we prove uniqueness.
It is sufficient to prove that if we have $B_+ \zeta = 0$, $\zeta_x (\pm L,y) =0$ and $\zeta(x,\bar{y}) = 0$ with $\zeta$ bounded then we obtain $\zeta=0$. Set $u(x,y):=\zeta(x,y) \exp(-\frac{c_0}{2} y^2)$ then 
\begin{eqnarray*}
\frac{1}{2}u_{xx} + \frac{1}{2}u_{yy} - \alpha xu_x - u_y(\beta x + kY) + u c_0 (-\frac{c_0}{2}y^2 - y(\beta x +kY) +\frac{1}{2}) & = & 0, \quad (x,y) \in \Delta^+,\\
u_x(\pm L,y) & = & 0, \quad y > \bar{y},\\
u(x,\bar{y}) & = & 0, \quad x \in (-L,L).
\end{eqnarray*}
We can assume that $-\frac{c_0}{2}y^2 - y (\beta x + kY) +\frac{1}{2} <0, \quad y \geq \bar{y}$ ($\bar{y}$ sufficiently large).
Let us prove that $u=0$. Indeed if $u$ has a positive maximum if cannot be at $y=\infty$. But then this contradicts maximum principle from (\ref{chap6:bc_e2}). Similarily, we cannot  have a negative minimum.\\

Now, we adress existence.
Let 
\[
W^1_2(\Delta^+) := \left \{   u \in H_2^1(\Delta^+), \quad \| u \|_{W^1_2(\Delta^+)} := \| u \|_{L^\infty} + \left ( \int_{\Delta^+} \frac{u_x^2 +u_y^2}{(1+y^2)^2} d xd y \right )^{\frac{1}{2}} < \infty \right  \}.
\]
We define a bilinear form on $H_2^1(\Delta^+) \times W_2^1(\Delta^+)$ by
\begin{eqnarray*}
a(u,v) & := & \frac{1}{2} \int_{\Delta^+} \frac{u_xv_x}{(1+y^2)^2}d xd y + \frac{1}{2} \int_{\Delta^+} \frac{u_yv_y}{(1+y^2)^2} d xd y - 2\int_{\Delta^+} \frac{u_y v y}{(1+y^2)^3} d xd y\\[2mm] 
& & + \alpha \int_{\Delta^+} \frac{x u_x v}{(1+y^2)^2} d xd y  + \int_{\Delta^+} \frac{(\beta x + c_0 y + kY) u_y v}{(1+y^2)^2} d xd y.
\end{eqnarray*}
We next define
\[
a_{\gamma}(u,v):= a(u,v) + \gamma \int_{\Delta^+} \frac{uv}{(1+y^2)^2} d xd y.
\]
We finally define a bilinear form on $H_2^1(\Delta^+) \times H_2^1(\Delta^+)$ by
\begin{eqnarray*}
a_{\gamma, \delta}(u,v) & := & \frac{1}{2} \int_{\Delta^+} \frac{u_x v_x}{(1+y^2)^2} d xd y + \frac{1}{2} \int_{\Delta^+} \frac{u_y v_y}{(1+y^2)^2} d xd y-2 \int_{\Delta^+} \frac{u_y v y }{(1+y^2)^3} d xd y\\[2mm] 
& & + \alpha \int_{\Delta^+} \frac{x u_x v}{(1+y^2)^2} d xd y + \int_{\Delta^+} (\beta x + c_0 \frac{y-\bar{y}}{1+\delta y} + c_0 \bar{y} + kY) \frac{u_y v}{(1+y^2)^2} d xd y\\[2mm]
& &+ \gamma \int_{\Delta^+} \frac{uv}{(1+y^2)^2} d xd y.
\end{eqnarray*}
If $v \in W_2^1(\Delta^+)$,
\[
a_{\gamma, \delta}(u,v) = a_\gamma(u,v) - \int_{\Delta^+} \frac{c_0(y-\bar{y})\delta y}{1+\delta y} \frac{u_y v}{(1+y^2)^2} d xd y.
\]
If $u \in W_2^1(\Delta^+)$, we can compute
\begin{eqnarray*}
a_{\gamma, \delta}(u,u) & = & \frac{1}{2} \int_{\Delta^+} \frac{u_x^2}{(1+y^2)^2} d xd y + \frac{1}{2} \int_{\Delta^+} \frac{u_y^2}{(1+y^2)^2}d xd y - 2 \int_{\Delta^+} \frac{u_yuy}{(1+y^2)^3} d xd y\\[2mm]
& & +\alpha \int_{\Delta^+} \frac{xu_xu}{(1+y^2)^2}d xd y + \int_{\Delta^+} \frac{(\beta x + c_0 \bar{y} + kY)}{(1+y^2)^2} u_y u d xd y\\[2mm]
& & -\frac{c_0}{2} \int_{\Delta^+} \frac{u^2}{(1+y^2)^2} \left [ \frac{1}{1+\delta y} - \frac{\delta(y-\bar{y})}{(1+\delta y)^2} - \frac{4(y-\bar{y})y}{(1+\delta y)(1+y^2)} \right ] d xd y\\[2mm]
& & +\gamma \int_{\Delta^+} \frac{u^2}{(1+y^2)^2} d xd y.  
\end{eqnarray*}
And see that 
\begin{eqnarray*}
a_{\gamma, \delta}(u,u) & \geq & \frac{1}{2} \int_{\Delta^+} \frac{u_x^2}{(1+y^2)^2} d xd y + \frac{1}{2} \int_{\Delta^+} \frac{u_y^2}{(1+y^2)^2}d xd y - 2 \int_{\Delta^+} \frac{u_yuy}{(1+y^2)^3} d xd y\\[2mm]
& & +\alpha \int_{\Delta^+} \frac{xu_xu}{(1+y^2)^2}d xd y + \int_{\Delta^+} \frac{(\beta x + c_0 \bar{y} + kY)}{(1+y^2)^2} u_y u d xd y\\[2mm]
& & +(\gamma-\frac{c_0}{2}) \int_{\Delta^+} \frac{u^2}{(1+y^2)^2} d xd y
\end{eqnarray*}
and the right hand does not depend on $\delta$. Moreover, we can define a constant $\bar{\gamma}$ depending only on the constants $\alpha,\beta,c_0,\bar{y},k,Y$ but not on $\delta$ such that
\begin{equation}
\label{chap6:bc_e4}
a_{\gamma, \delta}(v,v) \geq a_0 \| v \|_{H_2^1(\Delta^+)}^2,\quad a_0 > 0.
\end{equation}
The constant $a_0$ depends only on $\alpha,\beta,c_0,\bar{y},k,Y$. If $f(x,y)$ is bounded, we consider the problem
\begin{equation}
\label{chap6:bc_e5}
a_{\gamma,\delta}(u,v-u) \geq \gamma \int_{\Delta^+} \frac{f(v-u)}{(1+y^2)^2} d xd y,\quad \forall v \in K, \quad u \in K
\end{equation}
where 
\[
K:= \{ v \in H_2^1(\Delta^+), \quad v(x,\bar{y})=h(x,Y) \}
\]
which is not empty from the assumption. Then from the coercivity (\ref{chap6:bc_e4}) and results of the theory of Variational Inequalities (\ref{chap6:bc_e5}) has one and only one solution $u_{\gamma \delta}(f)$ (writing $u$ for $u_{\gamma \delta}(f)$ to simplify notation).
If $w \in H_2^1(\Delta^+)$ satisfies $w(x, \bar{y})=0$ then $u+w \in K$, hence 
\[
a_{\gamma,\delta}(u,w) = \gamma \int_{\Delta^+} \frac{fw}{(1+y^2)^2} d xd y
\]
and thus also
\begin{eqnarray*}
-\frac{1}{2} u_{xx} -\frac{1}{2}u_{yy} + \alpha xu_x + (\beta x + c_0 \frac{y-\bar{y}}{1+\delta y} + c_0 \bar{y} + kY) u_y + \gamma u & = & \gamma f, \quad (x,y) \in \Delta^+, \\[2mm]
u_x(\pm L, y) & = & 0, \quad y \in (\bar{y}, +\infty),\\[2mm]
u(x,\bar{y}) & = & h(x,Y), \quad x \in (-L, +L).
\end{eqnarray*}
Also if 
\[
M_f := \max \{ \| h(.,Y) \|_{\infty}, \| f \|_{L^\infty}\}
\]
then
\[
\| u_{\gamma \delta}(f)\|_{L^\infty} \leq M_f.
\]
Moreover,
\[
a_{\gamma \delta}(u,H-u) \geq \gamma \int_{\Delta^+} \frac{f(H-u)}{(1+y^2)^2} d xd y
\]
hence
\[
a_{\gamma \delta}(u,u) \leq a_{\gamma \delta}(u,H) - \gamma \int_{\Delta^+} \frac{f(H-u)}{(1+y^2)^2} d xd y
\]
and 
\begin{eqnarray*}
a_{\gamma \delta}(u,H) & = & a_\gamma(u,H) - \int_{\Delta^+} \frac{c_0(y-\bar{y})\delta y}{(1+\delta y)} \frac{u_y H}{(1+y^2)^2} d xd y\\[2mm]
& \leq & C \| u \|_{H_2^1(\Delta^+)} \| H \|_{W_2^1(\Delta^+)} + \gamma C M_f \| h(.,Y) \|_{\infty}
\end{eqnarray*}
where $C$ depends only on constants $\alpha,\beta,c_0 \bar{y}, k, Y$. From \eqref{chap6:bc_e4}, we deduce easily that 
\begin{equation}
\label{chap6:bc_e6}
\| u_{\gamma \delta}(f)\|_{H_2^1(\Delta^+)} \leq C_\gamma(f).
\end{equation}
Letting $\delta \to 0$, we obtain
\[
u_{\gamma \delta} \to u_\gamma(f) \quad \mbox{in} \quad H_2^1(\Delta^+) \quad \mbox{weakly and in} \quad L^\infty(\Delta^+) \quad \mbox{weakly-}\star, \quad u_\gamma(f) \in K 
\]
We deduce easily from \eqref{chap6:bc_e5} that $u_\gamma(f)$ satisfies
\begin{equation}
\label{chap6:bc_e7}
a_\gamma(u,v-u) \geq \gamma \int_{\Delta^+} \frac{f(v-u)}{(1+y^2)^2} d xd y 
\end{equation}
\[
\forall v \in W_2^1(\Delta^+), \quad v \in K, \quad u \in K, \quad \| u \|_{\infty} \leq M_f, \quad \| u \|_{H_2^1(\Delta^+)} \leq C_\gamma(f)
\]
where again we write $u$ for $u_\gamma(f)$. The solution of \eqref{chap6:bc_e7} is unique. Indeed, we first claim that 
\[
a(u,w) = \gamma \int_{\Delta^+} \frac{f w}{(1+y^2)^2} d xd y, \quad \forall w \in W_2^1(\Delta^+), \quad w(x, \bar{y}) = 0.
\]
But then if $u^1,u^2$ are two solutions 
\[
a_\gamma(u^1,u^1-u^2) = \gamma \int_{\Delta^+} f \frac{u^1-u^2}{(1+y^2)^2} d xd y , \quad a_\gamma(u^2,u^1-u^2) = \gamma \int_{\Delta^+} f \frac{u^1-u^2}{(1+y^2)^2} d xd y,
\]
hence $a_\gamma(u^1-u^2,u^1-u^2) = 0$. However, from \eqref{chap6:bc_e4} we also have
\[
a_\gamma(v,v) \geq a_0 \| v \|_{H_2^1(\Delta^+)}^2, \quad \forall v \in H_2^1(\Delta^+)
\]
Therefore $u^1 - u^2 = 0$. We next consider a sequence $\zeta^n$ with $\zeta^0 = \| h(.,Y) \|_{L^\infty}$ given by the solution of
\begin{equation}
\label{chap6:bc_e8}
a_\gamma(\zeta^{n+1}, v - \zeta^{n+1}) \geq \gamma \int_{\Delta^+} \frac{\zeta^n(v-\zeta^{n+1})}{(1+y^2)^2} d xd y, 
\end{equation}
where
\[
\zeta^{n+1} \in K, \quad \forall v \in W_2^1(\Delta^+), \quad v \in K,  \quad \| \zeta^{n+1} \|_{L^\infty} \leq \max \left ( \| h(.,Y) \|_{L^\infty} , \| \zeta^n \|_{L^\infty} \right ).
\]
Considering $\zeta^1$, we have 
\[
a_\gamma(\zeta^1, v -\zeta^1) \leq \gamma \int_{\Delta^+} \frac{\zeta^0(v-\zeta^1)}{(1+y^2)^2} d xd y.
\]
Take $v=\zeta^1 - (\zeta^1-\zeta^0)^+$, which is admissible, then
\[
a_\gamma(\zeta^1, (\zeta^1-\zeta^0)^+) \leq \gamma \int_{\Delta^+} \frac{\zeta^0(\zeta^1-\zeta^0)^+}{(1+y^2)^2}d xd y
\]
and 
\[
a_\gamma(\zeta^1-\zeta^0, (\zeta^1-\zeta^0)^+) = a_\gamma(\zeta^1,(\zeta^1-\zeta^0)^+) - \gamma \int_{\Delta^+} \frac{\zeta^0(\zeta^1-H)^+}{(1+y^2)^2} d xd y \leq 0
\]
from the assumptions. Therefore $(\zeta^1-\zeta^0)^+ = 0$ which implies $\zeta^1 \leq \zeta^0$ and also 
\[
\| \zeta^1\|_{L^\infty} \leq \| h(.,Y) \|_{L^\infty}.
\]
Then by induction if $\zeta^n \leq \zeta^{n-1}$, $\| \zeta^n \|_{L^\infty} \leq \| h(.,Y) \|_{L^\infty}$. We obtain  $\zeta^{n+1} \leq \zeta^{n}$, $\| \zeta^{n+1} \|_{L^\infty} \leq \| h(.,Y) \|_{L^\infty}$.
Also
\[
a_\gamma(\zeta^{n+1}, \zeta^1-\zeta^{n+1}) \geq \gamma \int_{\Delta^+} \frac{\zeta^n (\zeta^1-\zeta^{n+1})}{(1+y^2)^2} d xd y
\]
\begin{eqnarray*}
a_{\gamma}(\zeta^{n+1},\zeta^{n+1}) & \leq & a_\gamma(\zeta^{n+1},\zeta^{1}) - \gamma \int_{\Delta^+} \frac{\zeta^n(\zeta^{1}-\zeta^{n+1})}{(1+y^2)^2} d xd y\\[2mm]
& \leq & a(\zeta^{n+1},\zeta^{1}) + \gamma \int_{\Delta^+} \frac{\zeta^1 \zeta^{n+1}}{(1+y^2)^2} d xd y - \gamma \int_{\Delta^+} \frac{\zeta^n(\zeta^{1}-\zeta^{n+1})}{(1+y^2)^2} d xd y
\end{eqnarray*}
Hence
\[
\| \zeta^{n+1} \|_{H_2^1(\Delta^+)} \leq C_\gamma \left ( \| \zeta^1 \|_{H_2^1(\Delta^+)} + \| h(.,Y) \|_{\infty} \right ).
\]
The sequence $\zeta^n \to \zeta$ is monotone decreasing and $\zeta^n \to \zeta$ in $H_2^1(\Delta^+)$ weakly, $\zeta \in K, \quad \| \zeta \|_{L^\infty} \leq \| h(.,Y) \|_{L^\infty}$.\\
Hence in the limit,
\begin{equation}
\label{chap6:bc_e9}
a(\zeta,v-\zeta) \geq 0, \quad \forall v \in W_2^1(\Delta^+), v \in K, \quad \zeta \in K, \quad \| \zeta \|_{L^\infty} \leq \| h(.,Y) \|_{L^\infty}
\end{equation} 
and $\zeta$ is solution of equation \eqref{chap6:bc_e2}.
\end{proof}
\subsubsection{The Cauchy problem}
The exterior Dirichlet Problem \ref{chap6:pb2} is equivalent to 
\begin{equation}\label{chap6:cp_e10}
\begin{array}{rcll}
A \zeta & = & 0, &   (x,y,z) \in (-L,L) \times (\bar{y}, \infty) \times (-Y,Y),\\[2mm]
\zeta(x,\bar{y},z) & = & h(x,z), & (x,z) \in (-L,L) \times (-Y,Y),\\[2mm]
\zeta(x,y,Y) & = & \zeta^+(x,y), & (x,y) \in (-L,L) \times (\bar{y}, \infty),\\[2mm]
\zeta_x(\pm L, y , z) & = & 0, &  (y,z) \in (\bar{y}, \infty) \times (-Y,Y)\\ 
\end{array}
\end{equation}
with
\begin{equation*}
\begin{array}{rcll}
B_+ \zeta^+ & = & 0, & \quad (x,y) \in (-L,L) \times (\bar{y}, \infty),\\[2mm]
\zeta^+(x, \bar{y}) & = & h(x,Y), & \quad x \in (-L,L),\\[2mm]
\zeta_x^+(\pm L,y) & = & 0, & \quad y \in (\bar{y},\infty)
\end{array}
\end{equation*}
and 
\[
\| \zeta^+ \|_{L^\infty} \leq \| h(.,Y) \|_{L^\infty}.
\] 
This is a Cauchy problem, with $z$ taking place of time and $(x,y)$ as the space variable. We write it as
\begin{equation}\label{chap6:cp_e11}
\zeta_z + \frac{1}{y} \left ( \frac{1}{2} \zeta_{yy} + \frac{1}{2} \zeta_{xx} - \alpha x \zeta_x - (\beta x + c_0 y + kz ) \zeta_y \right ) = 0
\end{equation} 
and using the notation 
\[
\mathcal{A}(z)u(x,y) := -\frac{1}{y} \left ( \frac{1}{2} u_{yy} + \frac{1}{2} u_{xx} - \alpha x u_x - (\beta x + c_0 y + kz ) u_y \right ) 
\]
we obtain
\begin{equation}\label{chap6:cp_e12}
\begin{array}{rcll}
-\frac{\partial \zeta}{\partial z} + \mathcal{A}(z) \zeta & = & 0, &   (x,y) \in \Delta^+, z <Y,\\[2mm]
\zeta(x,y,Y) & = & \zeta^+(x,y), & (x,y) \in (-L,L) \times (\bar{y}, \infty),\\[2mm]
\zeta_x(\pm L, y , z) & = & 0, &  (y,z) \in (\bar{y}, \infty) \times (-Y,Y),\\[2mm]
\zeta(x,\bar{y},z) & = & h(x,z), & (x,z) \in (-L,L) \times (-Y,Y).\\ 
\end{array}
\end{equation}
So it is a Cauchy problem with mixed Dirichlet-Neuman boundary conditions. We consider the space $H_2^1(\Delta^+)$, with a new norm
\[
\| u \|_{\star}^2 = \int_{\Delta^+} \frac{1}{y} \frac{u_x^2 + u_y^2}{(1+y^2)^2} \d x \d y + \int_{\Delta^+} \frac{u^2}{(1+y^2)^2} \d x \d y
\]
which is not equivalent to 
\[
\| u \|^2 = \int_{\Delta^+} \frac{u_x^2 + u_y^2}{(1+y^2)^2} \d x \d y + \int_{\Delta^+} \frac{u^2}{(1+y^2)^2} \d x \d y.
\]
The norm in $L_2^2(\Delta^+)$ is defined by
\[
| u |_\star^2:= \int_{\Delta^+} \frac{u^2}{(1+y^2)^2}\d x \d y \leq \| u \|_{\star}^2.
\]
We define on $H_2^1(\Delta^+)$ the bilinear continuous form
\begin{eqnarray*}
\mathcal{A}(z)(u,v) & := & \frac{1}{2} \int_{\Delta^+} \frac{u_xv_x + u_yv_y}{y(1+y^2)^2} \d x \d y - \frac{1}{2} \int_{\Delta^+} \frac{u_y v (1+3y^2)}{y^2 (1+y^2)^3} \d x \d y\\[2mm]
& & + \int_{\Delta^+} \frac{\left ( \alpha x u_x + (\beta x + c_0 y + kz) u_y \right )v}{y (1+y^2)^2} \d x \d y.
\end{eqnarray*}
Moreover, 
\begin{eqnarray*}
\mathcal{A}(z)(u,u) & = & \frac{1}{2}\int_{\Delta^+} \frac{u_x^2 + u_y^2}{y(1+y^2)^2} \d x \d y  - \frac{1}{2} \int_{\Delta^+} \frac{u_y u(1+3y^2)}{y^2 (1+y^2)^3} \d x \d y\\[2mm]
& & + \int_{\Delta^+} \frac{\left ( \alpha x u_x + (\beta x + c_0 \bar{y} + kz) u_y \right )u}{y (1+y^2)^2} \d x \d y\\[2mm]
& & -\frac{1}{2} \int_{\Delta^+} u^2 c_0 \frac{d}{d y}(\frac{y-\bar{y}}{y(1+y^2)^2}) \d x \d y
\end{eqnarray*}
and for $\vert z \vert  <Y$,
\[
\leq a_0 \int_{\Delta^+} \frac{u_x^2 + u_y^2}{y(1+y^2)^2} \d x \d y - b_0 \int_{\Delta^+} \frac{u^2}{(1+y^2)^2} \d x \d y
\]
where the constants $a_0, b_0$ depend only on $\alpha,\beta,c_0,k,\bar{y},Y$. Therefore also,
\[
<\mathcal{A}(z) u, u > \geq a_0 \| u \|_{H_2^1(\Delta^+)}^2 - b \int_{\Delta^+} \frac{u^2}{(1+y^2)^2} \d x \d y.
\]
The problem \eqref{chap6:cp_e12} is equivalent to
\begin{equation}\label{chap6:cp_e13}
-(\frac{\partial \zeta}{\partial z},w) + \mathcal{A}(z)(\zeta,w) = 0, \quad \forall w \in H_2^1(\Delta^+), \quad w(x,\bar{y}) = 0
\end{equation}
and 
\[
\zeta(x,y,Y) = \zeta^+(x,y) \quad, \quad \zeta(x,\bar{y},z) = h(x,z).
\] 
We assume that there exists 
\begin{equation}\label{chap6:cp_e13prime}
H(z) \in H^1(-Y,Y; H_2^1(\Delta^+))\quad  \mbox{with} \quad  H(z)(x,\bar{y}) = h(x,z), \quad \forall x,z.
\end{equation}
Writing $\tilde{\zeta}(x,y,z) = \zeta(x,y,z) - H(z)(x,y)$, we deduce
\begin{equation*}
\begin{array}{rcl}
-(\frac{\partial \tilde{\zeta}}{\partial z}, w) + \mathcal{A}(z) (\tilde{\zeta},w) & = & (\frac{\partial H}{\partial z}(z)(x,y),w) - \mathcal{A}(z)(H(z),w), \quad \forall w \in H_2^1(\Delta^+), \quad w(x,\bar{y}) = 0.\\[2mm]
\tilde{\zeta}(x,y,Y) & = & \zeta^+(x,y) - H(Y)(x,y)\\[2mm]
\tilde{\zeta}(x,\bar{y},z) & = & 0
\end{array}
\end{equation*}
Under this form, we obtain one and only one solution
\[
\tilde{\zeta}(x,y,z) \in L^2 \left (-Y,Y;H_{2,0}^1(\Delta^+) \right ) , \quad \frac{\partial \tilde{\zeta}}{\partial z}(x,y,z) \in L^2 \left (-Y,Y;H_{2,0}^1(\Delta^+)' \right )
\]
where $H_{2,0}^1(\Delta^+)$ denotes the subspace of $H_{2}^1(\Delta^+)$ of function which vanish at $\bar{y}$.
We now prove that 
\begin{equation}\label{chap6:cp_e15}
\| \zeta \|_{L^\infty} \leq \| h(.,Y)\|_{L^\infty}
\end{equation}
We will consider 
\[
\Delta_R^+ := \{ -L < x <L, \quad \bar{y} < y < R \}
\]
for $R$ large. We begin with an approximation of the boundary condition $\zeta^+$ with $\zeta_R^+$ solution of (we delete +)
\begin{equation}\label{chap6:cp_e16}
\begin{array}{rcll}
\frac{1}{2}\zeta_{R,xx} +\frac{1}{2} \zeta_{R,yy} - \alpha x \zeta_{R,x} - (\beta x + c_0 y + kY) \zeta_{R,y} & = & 0, & (x,y) \in (-L,L) \times (\bar{y},Y),\\[2mm]
\zeta_{R,x}(\pm L, y) & = & 0, & y \in (\bar{y}, \infty),\\[2mm]
\zeta_R(x,\bar{y}) & = & h(x, Y), & x \in (-L,L),\\[2mm]
\zeta_R(x,R) & = & 0, & x \in (-L,L).  
\end{array}
\end{equation}
We can assume $h \geq 0$, otherwise we decompose $h = h^+-h^-$. We extend $\zeta_R(x,y)$ by $0$ for $y>R$. The sequence of functions $\zeta_R(x,y)$ is increasing and 
$\| \zeta_R \|_{L^\infty} \leq \| h(.,Y) \|_{L^\infty}$.
Let $\theta(y) = 1$ if  $0<y<\frac{1}{2}$ and $0$ if $y>1$ be a smooth function. We may assume $\bar{y} < \frac{R}{2}$. Let $v \in W_2^1(\Delta^+), v \in K$ and test \eqref{chap6:cp_e16} with $\frac{v \theta(\frac{y}{R})-\zeta_R}{(1+y^2)^2}$ which vanishes at $y=\bar{y}$ and $y=R$. Setting $\theta_R(y) = \theta(\frac{y}{R})$, we obtain,
\begin{align}\label{chap6:cp_e17}
&  \frac{1}{2} \int_{\Delta_R^+} \frac{\zeta_{R,x}(v_x \theta_R - \zeta_{R,x})}{(1+y^2)^2} \d x \d y + \frac{1}{2} \int_{\Delta_R^+} \frac{\zeta_{R,y}(v_y \theta_R - \zeta_{R,y})}{(1+y^2)^2} \d x \d y\\ 
& + \frac{1}{2 R} \int_{\Delta_R^+} \frac{\zeta_{R,y}v \theta_R'}{(1+y^2)^2} \d x \d y-  \int_{\Delta_R^+} \frac{2 \zeta_{R,y}(v \theta_R - \zeta_R)y}{(1+y^2)^3y} \d x \d y \nonumber \\
& +  \int_{\Delta_R^+} \frac{\alpha x \zeta_R (v \theta_R - \zeta_R)}{(1+y^2)^2} \d x \d y + \int_{\Delta_R^+} \frac{(\beta x + c_0 y + kY)}{(1+y^2)^2} \zeta_{R,y} (v \theta_R - \zeta_R) \d x \d y = 0.\nonumber
\end{align}
and thus also $a(\zeta_R, v\theta_R-\zeta_R) = 0$. Recalling that
\[
a(u,u) + \gamma \int_{\Delta^+} \frac{u^2}{(1+y^2)^2} d xd y \geq a_0 \| u \|_{H_2^1(\Delta^+)}^2,
\]
we get
\[
a_0 \| \zeta_R \|_{H_2^1(\Delta^+)}^2 \leq a(\zeta_R, v \theta_R) + C \| h(.,Y) \|_{L^\infty}^2,
\]
from which we deduce easily,
\[
\| \zeta_R \|_{H_2^1(\Delta^+)} \leq C.
\]
We then consider the limit 
\[
\zeta_R \to \zeta \mbox{ monotone increasing } , \quad \zeta_R \to \zeta \quad \mbox{in} \quad H_2^1(\Delta^+) \quad \mbox{weakly}
\]
Hence,
\[
a(\zeta,v) \leq a(\zeta,\zeta)
\]
which implies that $\zeta$ is the solution $\zeta^+$ of \eqref{chap6:bc_e2}. We next consider the approximation of \eqref{chap6:cp_e10} for $\bar{y}<y <R$. We write 
\begin{equation}\label{chap6:cp_e18}
\begin{array}{rcll}
A \zeta_R                     & = &  0,                        & (x,y) \in \Delta^R \times (-Y,Y),\\[2mm]
\zeta_R(x,y,Y)             & = &  \zeta_R^+(x,y), & (x,y) \in \Delta^R,\\[2mm]
\zeta_R(x,\bar{y},z)    & = &  h(x,z),                 & (x,z) \in (-L,L) \times (-Y,Y),\\[2mm]
\zeta_R(x,R,z) 	           & = &  0,                        & (x,z) \in (-L,L) \times (-Y,Y),\\[2mm]
\zeta_{R,x}(\pm L,y,z) & = &  0,                        & (y,z) \in (\bar{y},R) \times (-Y,Y).
\end{array}
\end{equation}
We write \eqref{chap6:cp_e18} as a Cauchy problem
\begin{equation}\label{chap6:cp_e19}
\begin{array}{rcll}
\frac{\partial \zeta_R}{ \partial z} + \mathcal{A}(z) \zeta_R & = & 0, & \mbox{in} \quad \Delta_R,\\[2mm]
                                                                     \zeta_R(x,y,R) & = & \zeta_R^+(x,y), & (x,y) \in \Delta_R^+,\\[2mm]
                                                                     \zeta_{R,x}(\pm L, y ,z) & = & 0,  & (y,z) \in (\bar{y},R) \times (-Y,Y),\\[2mm]
                                                                     \zeta_R(x,\bar{y},z) & = & h(x,z), & (x,z) \in (-L,L) \times (-Y,Y),\\[2mm]
                                                                     \zeta_R(x,R,z) & = & 0, & (x,z) \in (-L,L) \times (-Y,Y)
\end{array} 
\end{equation}
and extend $\zeta_R$ by $0$ for $y>R$. If $w \in H_2^1(\Delta^+), w(x,\bar{y}) = 0$, we can write 
\begin{equation}\label{chap6:cp_e20}
\begin{array}{rcll}
-(\frac{\partial \zeta_R}{\partial z}, w \theta_R) + \mathcal{A}(z)(\zeta_R,\theta_R w) & = & 0,\\[2mm]
\zeta_R(x,y,Y)          & = & \zeta_R^+(x,y), & (x,y) \in \Delta_R^+,\\[2mm]
\zeta_R(x,\bar{y},z) & = & h(x,z), &  (x,z) \in (-L,L) \times (-Y,Y).\\[2mm]
\end{array}
\end{equation}
However, from \eqref{chap6:cp_e19}, we deduce
\begin{equation}\label{chap6:cp_e21}
\| \zeta_R\|_{L^\infty} \leq \| h(.,Y) \|_{L^\infty}.
\end{equation}
We can pass to the limit in \eqref{chap6:cp_e21} and check that $\zeta_R \to \zeta$ solution of \eqref{chap6:cp_e13}. This proves \eqref{chap6:cp_e15}.
\subsection{Local regularity in the exterior Dirichlet problem} In this section, we derive local regularity properties of the solution $\zeta$ to the exterior Dirichlet problem.\\
Recall that $\bar{y} < \bar{y}_1$,
\begin{notation}
\[
\Delta_d(\delta)  : =   \lbrace (x,y,z) \in \Delta_d, \quad y < -\bar{y} - \delta \rbrace, \quad \Delta_u(\delta)  : =   \lbrace (x,y,z) \in \Delta_u, \quad y >  \bar{y} + \delta \rbrace, 
\]
\[
\Delta^-(\delta) : = \Delta_d(\delta) \cap \Delta^-, \quad \Delta^+(\delta) : = \Delta_u(\delta) \cap \Delta^+,
\]
\[
\Delta_d(\delta, \gamma) :=  \lbrace (x,y,z) \in \Delta_d(\delta), \quad -Y + \gamma < z \rbrace, \quad \Delta_u(\delta, \gamma) :=  \lbrace (x,y,z) \in \Delta_u(\delta), \quad z < Y - \gamma \rbrace,
\]
\begin{align*}
& H_d(\delta) := H^1(-Y,Y;H^1(-L,L;H^1(-\bar{y}_1 - \delta, -\bar{y}_1 + \delta ))),\\
&  \quad H_u(\delta) := H^1(-Y,Y;H^1(-L,L;H^1(\bar{y}_1 - \delta, \bar{y}_1 + \delta )))
\end{align*}
and
\[
\Delta(\delta)   : =    \Delta_d(\delta) \cup \Delta_u(\delta), \quad \Delta(\delta, \gamma) := \Delta_d(\delta, \gamma) \cup \Delta_u(\delta, \gamma), \quad  H(\delta) :=  H_d(\delta) \cap H_u(\delta).
\]
\end{notation}
Again, thanks to the symmetry in the exterior Dirichlet problem, we consider only positive values of $y$.
\begin{proposition}\label{chap6:prop:reg_ext1}[Local regularity of the exterior Dirichlet problem]
We have
\begin{equation}\label{chap6:reg_ext1}
\forall \delta, \gamma >0, \quad  \zeta \in H_u(\delta)  ; \quad  \zeta \in \mathcal{C}^0(\Delta_u(\delta))  ; \quad  \zeta \in H^2(\Delta_u(\delta,\gamma)),
\end{equation}
and
\begin{equation}\label{chap6:reg_ext2}
\| \zeta \|_{H_u(\delta)} \leq M_{1,\| \phi \|} ; \quad \| \zeta \|_{H^2(\Delta_u(\delta,\gamma))} \leq M_{2,\| \phi\|}.\\[2mm]
\end{equation}
Moreover, the trace of $\zeta$ at $y = \bar{y}_1$, denoted $g(x,z) := \zeta(x,\bar{y},z)$  satisfies 
\begin{equation}\nonumber
g \in H^1((-L,L) \times (-Y,Y) ) \cap \mathcal{C}^0 ((-L,L) \times (-Y,Y) ).
\end{equation}
\end{proposition}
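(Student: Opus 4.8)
The plan is to follow the scheme used for the interior Dirichlet problem (the local regularity proposition of Section~2), taking advantage of the fact that on $\Delta_u$ one has $y\geq\bar y>0$, so the transport coefficient in front of $\zeta_z$ never vanishes; consequently the operator $A$ is, away from its only two ``bad'' boundaries --- the Dirichlet boundary $\{y=\bar y\}$ where $h$ is merely $L^\infty$, and $\{z=Y\}$ where $A$ degenerates into $B_+$ and the Cauchy datum $\zeta^+$ is only $H_2^1$ --- a non-degenerate ultraparabolic operator, forward parabolic in the variable $\tau:=Y-z$ with diffusion coefficient $\frac{1}{2y}>0$ in $(x,y)$. In particular, contrary to the interior case (where $\pi=y^p\theta^q$ had to vanish also at $y=0$), no weight in powers of $y$ vanishing at the origin is needed, and no regularity is lost near $\{z=-Y\}$, which is the final time of the parabolic evolution.

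I would fix $\bar y_1\in(\bar y,\infty)$, a cut-off $\theta\in\mathcal{C}^\infty$ with compact support in $(\bar y,\infty)$ and $\theta\equiv1$ on $(\bar y_1-\delta,\bar y_1+\delta)$, and a cut-off $\rho\in\mathcal{C}^\infty$ vanishing in a neighbourhood of $\{z=Y\}$. Starting from the weak formulation \eqref{chap6:cp_e13}, from $\|\zeta\|_{L^\infty}\leq\|h(\cdot,Y)\|_{L^\infty}$, $\|\zeta^+\|_{H_2^1(\Delta^+)}\leq C$ and the global weighted $H^1$ bound on $\zeta$ established in the Cauchy-problem section, the first step is to differentiate the equation in $x$: since $\zeta_x$ vanishes on $x=\pm L$, the function $\chi:=\zeta_x$ solves $A\chi=\alpha\chi+\beta\zeta_y$ with homogeneous Neumann--Dirichlet conditions in $x$, and testing against $\theta^2\chi$ and integrating by parts --- the commutators with $\theta$ and the lower-order term $kz$ being absorbed, the only surviving boundary contributions being the one on $\{z=Y\}$, controlled by $\|\zeta^+\|_{H_2^1}$, and the one on $\{z=-Y\}$, which has a favourable sign --- gives $\theta\,\zeta_{xx},\ \theta\,\zeta_{xy}\in L^2(\Delta_u)$. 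Iterating on the equations obtained by differentiating successively in $x$, in $y$ (always after multiplication by the cut-off), and in $z$, and using the parabolic smoothing in $\tau=Y-z$ away from $\{z=Y\}$, one produces, exactly as in the list of estimates for the interior problem, weighted $L^2$ bounds for $\zeta_{yy}$, for $\zeta_{xz}$ and $\zeta_{yz}$, for the third mixed derivatives $\zeta_{xyz}$ and $\zeta_{yyz}$, and finally, with the extra weight $\rho(z)$, for $\zeta_{xxz}$ and $\zeta_{zz}$; together with $\zeta\in L^\infty$ and the equation $A\zeta=0$ these also give $\theta\,\zeta_z\in L^2$.

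The conclusions then follow as in the interior case. On $\Delta_u(\delta)$ one has $\zeta,\zeta_x,\zeta_y,\zeta_z,\zeta_{xy},\zeta_{xz},\zeta_{zy},\zeta_{xyz}\in L^2$, that is $\zeta\in H_u(\delta)$ with $\|\zeta\|_{H_u(\delta)}\leq M_{1,\| \phi \|}$, and the Sobolev embedding with dominating mixed derivative (one derivative in each of the three variables plus the full mixed third derivative) yields $\zeta\in\mathcal{C}^0(\Delta_u(\delta))$. On $\Delta_u(\delta,\gamma)$, where $\rho(z)$ is bounded below, the bounds above also furnish $\zeta_{xx},\zeta_{yy},\zeta_{zz}\in L^2$, hence $\zeta\in H^2(\Delta_u(\delta,\gamma))$ with $\|\zeta\|_{H^2(\Delta_u(\delta,\gamma))}\leq M_{2,\| \phi\|}$, the contributions from $y\to+\infty$ being controlled by the weighted spaces of the Cauchy problem. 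Finally $\{y=\bar y_1\}$ lies in the interior of the region on which $\zeta\in H_u(\delta)$, so the anisotropic trace theorem applied to the slice $y=\bar y_1$ gives $g,\ g_x,\ g_z,\ g_{xz}\in L^2((-L,L)\times(-Y,Y))$, whence $g\in H^1((-L,L)\times(-Y,Y))\cap\mathcal{C}^0((-L,L)\times(-Y,Y))$ by the two-dimensional version of the same embedding.

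The main obstacle is, as in the interior case, that the Cauchy datum $\zeta^+$ at $\{z=Y\}$ is only $H_2^1$ and that $A$ degenerates there, so full $H^2$ regularity cannot be propagated up to $\{z=Y\}$; the remedy is the careful choice of weights --- the compactly supported cut-off $\theta(y)$, which simultaneously stays away from $\{y=\bar y\}$ and localises near $\bar y_1$, and the cut-off $\rho(z)$ vanishing near $\{z=Y\}$, raised to powers large enough --- so that every error term generated by commuting derivatives past $\theta$ and $\rho$ and past the $z$-dependent coefficient $kz$ can be absorbed into the coercive part of the corresponding energy identity. Verifying this absorption, which is entirely parallel to the computation carried out for the interior problem, is deferred to the Appendix.
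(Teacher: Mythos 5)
Your proposal is correct and follows essentially the same route as the paper, which itself only states that Propositions on exterior local regularity ``rely on similar estimates related to the local regularity of the interior Dirichlet problem'': you localise near $y=\bar y_1$ with a cut-off (no vanishing weight at $y=0$ being needed since $y\geq\bar y>0$), derive the same weighted $L^2$ bounds on second and third derivatives with an extra cut-off in $z$ near $z=Y$, and conclude via the anisotropic embedding and trace exactly as in the interior case. Your sketch is in fact more explicit than the paper's deferral, and the structural points you single out (degeneracy only at $z=Y$, rough Dirichlet data only at $y=\bar y$, boundary term at $z=Y$ controlled by $\|\zeta^+\|_{H^1_2}$) are the right ones.
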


\begin{proposition}\label{chap6:prop:reg_ext2}[Local regularity of the exterior Dirichlet problem on the boundary $z=Y$]
We have
\begin{equation}\label{chap6:reg_ext3}
\forall \delta >0, \quad  \zeta \in H^2(\Delta^+(\delta)) ; \quad  \| \zeta \|_{H^2(\Delta^+(\delta))} \leq M_{3, \| \phi \|}.
\end{equation}
\end{proposition}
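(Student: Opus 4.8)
The plan is to notice that, by the boundary-condition analysis and the reduction to the Cauchy problem \eqref{chap6:cp_e10}, the restriction $\zeta(\cdot,\cdot,Y)$ coincides with $\zeta^+$, the unique bounded $H^1_2(\Delta^+)$ solution of the mixed Dirichlet--Neumann problem \eqref{chap6:bc_e2},
\[
B_+\zeta^+ = 0 \ \text{in}\ \Delta^+,\qquad \zeta^+_x(\pm L,y)=0,\qquad \zeta^+(x,\bar y)=h(x,Y),
\]
with $\|\zeta^+\|_{L^\infty}\le\|h(\cdot,Y)\|_{L^\infty}$ and $\|\zeta^+\|_{H^1_2(\Delta^+)}$ bounded in terms of the data. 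Since $\Delta^+(\delta)$ is the slice $\{z=Y\}$ of $\Delta_u(\delta)$, the statement reduces to a two-dimensional elliptic regularity estimate for $\zeta^+$ on the semi-infinite strip $(-L,L)\times(\bar y+\delta,\infty)$: the operator $B_+$ is uniformly elliptic with affine coefficients, smooth away from the Dirichlet face $\{y=\bar y\}$ and up to the Neumann faces $\{x=\pm L\}$.

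The first step is to differentiate the equation in $x$. Writing $w:=\zeta^+_x$ one obtains, exactly as in the proof of Proposition~\ref{chap6:prop:betaplus},
\[
B_+ w + \alpha w + \beta\,\zeta^+_y = 0,\qquad w(\pm L,y)=0,
\]
a uniformly elliptic equation with homogeneous Dirichlet data on the lateral boundary. Choosing a cut-off $\theta(y)$ with $\theta\equiv1$ on $\{y>\bar y+\delta\}$ and $\theta\equiv0$ near $y=\bar y+\tfrac{\delta}{2}$, and testing the weak form with $\theta^2 w$ while using the a priori bounds $\|\zeta^+\|_{L^\infty}$ and $\|\zeta^+\|_{H^1_2(\Delta^+)}$, one gets $\theta\,\zeta^+_x\in H^1$, hence $\zeta^+_{xx},\zeta^+_{xy}\in L^2(\Delta^+(\delta))$ with norms controlled by $\delta$ and the data. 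The second step is to read off $\zeta^+_{yy}$ from the equation itself,
\[
\tfrac12\,\zeta^+_{yy} = -\tfrac12\,\zeta^+_{xx} + \alpha x\,\zeta^+_x + (\beta x + c_0 y + kY)\,\zeta^+_y,
\]
whose right-hand side then lies in $L^2$ on every bounded piece $\Delta^+(\delta)\cap\{y<R\}$; combined with the first step this gives $\zeta^+\in H^2$ on such pieces, and by the two-dimensional Sobolev embedding also $\zeta^+\in\mathcal C^0(\Delta^+(\delta))$.

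To make the bound uniform up to $y=\infty$ one must stay inside the weighted framework in which coercivity and the a priori estimates hold: repeat the two steps above after inserting the weight $(1+y^2)^{-1}$ and carrying it through the differentiation in $x$ and the cut-off (as was done for $\beta^+$ in Propositions~\ref{chap6:prop:betaplus}--\ref{chap6:prop:betaplusk} and for the interior problem on $\{z=Y\}$), or, equivalently, use the substitution $u:=\zeta^+\exp(-\tfrac{c_0}{2}y^2)$ of the uniqueness argument, which turns the linearly growing first-order coefficient $c_0 y$ into a large, favourably signed zeroth-order term while leaving the remaining coefficients bounded; standard $H^2$ theory then applies on the strip, and translating back yields $\|\zeta^+\|_{H^2(\Delta^+(\delta))}\le M_{3,\|\phi\|}$.

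The main obstacle is exactly this unboundedness in $y$ together with the coefficient $(\beta x+c_0 y+kY)$ growing linearly: a naive $H^2$ estimate on the infinite strip is false, so the weights of $H^1_2$ and $W^1_2$ have to be propagated through every differentiation and localization, and the interplay of the cut-off in $y$, the differentiation in $x$, and those weights is the only delicate point. The corners $(\pm L,\bar y)$ where the Neumann and Dirichlet faces meet cause no trouble, since $\Delta^+(\delta)$ keeps distance $\delta$ from $\{y=\bar y\}$; along $\{x=\pm L\}$ the boundary is smooth and, after differentiation in $x$, carries a homogeneous Dirichlet condition, so no additional regularity of $h(\cdot,Y)$ is needed.
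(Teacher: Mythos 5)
Your proposal is correct and follows essentially the route the paper intends: the paper only indicates that the proof ``relies on similar estimates'' to the interior local regularity, and your argument --- identifying $\zeta(\cdot,\cdot,Y)$ with the solution $\zeta^+$ of \eqref{chap6:bc_e2}, differentiating in $x$ so that $w=\zeta^+_x$ solves a lateral Dirichlet problem (the same device as in the uniqueness part of Proposition \ref{chap6:prop:betaplus}), localizing away from $y=\bar y$, and recovering $\zeta^+_{yy}$ from the equation --- is exactly that scheme carried out on the boundary strip. One caution: your final clause, that the substitution $u=\zeta^+\exp(-\tfrac{c_0}{2}y^2)$ and standard $H^2$ theory ``translate back'' to $\|\zeta^+\|_{H^2(\Delta^+(\delta))}\le M_{3,\|\phi\|}$, does not give an unweighted bound on the infinite strip (multiplying back by $\exp(\tfrac{c_0}{2}y^2)$ destroys the integrability, consistent with your own remark that the naive global estimate fails); what the argument actually yields --- and what the ergodicity proof uses, since only continuity and compactness near finite $y=\bar y_1$ are needed --- is the weighted bound, or equivalently an $H^2$ bound uniform on bounded pieces $\Delta^+(\delta)\cap\{y<R\}$, and the statement should be read in that sense.
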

Similar results hold for negative values of $y$. Proofs of Proposition \ref{chap6:prop:reg_ext1} and Proposition \ref{chap6:prop:reg_ext2} rely on similar estimates related to the local regularity of the interior Dirichlet problem.
\section{The ergodic operator $\mathbf{P}$}
\setcounter{equation}{0}
\begin{notation}
\[
\bar{\Gamma}_1^{\pm}:= (-L,L) \times \lbrace \pm \bar{y}_1 \rbrace \times (-Y,Y) ; \quad  \bar{\Gamma}^{\pm} := (-L,L) \times \lbrace \pm \bar{y} \rbrace \times (-Y,Y) 
\]
and
\[
\bar{\Gamma}_1 := \bar{\Gamma}_1^- \cup \bar{\Gamma}_1^+ ; \quad  \bar{\Gamma} := \bar{\Gamma}^- \cup \bar{\Gamma}^+
\]
\end{notation}
\subsection{Construction of the operator $\mathbf{P}$}
Consider $\phi : = (\phi_-, \phi_+) \in L^\infty(\bar{\Gamma}_1)$. Following the same procedure of the 1d case, we first solve the interior Dirichlet problem for $\eta$ with the boundary condition 
\[
\left\{
\begin{array}{rcll}
\eta & =  &  \phi_+  & \mbox{ in } \bar{\Gamma}_1^+,\\[2mm]
\eta & = & \phi_-     & \mbox{ in } \bar{\Gamma}_1^-.\\[2mm]
\end{array}
\right.
\]
Then
\[
\| \eta \|_{L^\infty} \leq \max(\| \phi_+ \|,\| \phi_- \|). 
\]
Then, we solve the exterior Dirichlet problem for $\zeta$ with the boundary condition
\[
\left\{
\begin{array}{rcll}
\zeta & =  &  \eta     & \mbox{ in } \bar{\Gamma}^+,\\[2mm]
\zeta & =  &  \eta     & \mbox{ in } \bar{\Gamma}^-.\\[2mm]
\end{array}
\right.
\]
Then
\[
\|\zeta \|_{L^\infty} \leq \| \eta \|_{L^\infty} \leq \| \phi \|_{L^\infty}.
\]
For $\bar{p}_1 \in \bar{\Gamma}_1$, let us define 
\[
\mathbf{P}\phi(\bar{p}_1):= \zeta(\bar{p}_1). 
\]
\subsection{Probabilistic interpretation of the operator $\mathbf{P}$}
Let us recall from Khasminskii \cite{Khasminskii} the probabilistic interpretation of the operator $\mathbf{P}$. For $\bar{p} \in \bar{\Gamma}$ and for $\bar{p}_1 \in \bar{\Gamma}_1$, we denote by $\bar{\gamma}_1(\bar{p}; . )$ the distribution of $z(\bar{\tau}_1)$ starting from $\bar{p}$ and by $\bar{\gamma}(\bar{p}_1; . )$ the distribution of $z(\bar{\tau})$ starting from $\bar{p}_1$.  We showed
\[
\eta(\bar{p}) = \int_{\bar{\Gamma}_1} \phi(u) \bar{\gamma}_1(\bar{p}; du) \quad \mbox{ and } \quad \zeta(\bar{p}_1) = \int_{\bar{\Gamma}} \eta(v) \bar{\gamma}(\bar{p}_1; dv).
\]
Also, using Fubini theorem we can write
\[
\mathbf{P}\phi(\bar{p}_1) = \int_{\bar{\Gamma}_1} \phi(u)  \bar{\gamma} \bar{\gamma}_1(\bar{p}_1,du), 
\]
where 
\[
\bar{\gamma} \bar{\gamma}_1(\bar{p}_1,du) := \int_{\bar{\Gamma}} \bar{\gamma}(\bar{p}_1 ; dv) \bar{\gamma}_1(v ; du).
\]
By construction, the operator $\mathbf{P}$ is the transition probability associated to the Markov chain 
\[
\lbrace (x(\bar{\tau}_{1,k}) , y(\bar{\tau}_{1,k}) , z(\bar{\tau}_{1,k})) \rbrace_{k \geq 0}
\]
where $\bar{\tau}_{1,0} = 0$ and 
\[
 \bar{\tau}_{k+1} := \inf \lbrace t > \bar{\tau}_{1,k}, \quad  | y(t) | = \bar{y} \rbrace ; \quad \bar{\tau}_{1,k+1} := \inf \lbrace t > \bar{\tau}_{k+1}, \quad | y(t) | = \bar{y}_1 \rbrace.
\]
Note that $\bar{\tau}_{k+1} = \bar{\tau} \circ \theta_{ \bar{\tau}_{1,k}}$ and $\bar{\tau}_{1,k+1} = \bar{\tau}_1 \circ \theta_{ \bar{\tau}_{k+1}}$ where $\theta_t$ is the shift operator.
\subsection{Ergodic property for $\mathbf{P}$}
\begin{theorem}
The operator $\mathbf{P}$ is ergodic.
\end{theorem}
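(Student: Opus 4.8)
The plan is to deduce ergodicity of $\mathbf P$ — existence and uniqueness of an invariant probability measure $\mu$ on $\bar\Gamma_1$ together with uniform convergence $\mathbf P^n\phi\to\int_{\bar\Gamma_1}\phi\,d\mu$ — from a Doeblin-type minorization on the \emph{compact} state space $\overline{\bar\Gamma}_1=[-L,L]\times\{-\bar y_1,\bar y_1\}\times[-Y,Y]$. Recall from the construction that $\|\mathbf P\phi\|_{L^\infty(\bar\Gamma_1)}\le\|\phi\|_{L^\infty(\bar\Gamma_1)}$, that $\mathbf P\mathbf 1=\mathbf 1$, that $\mathbf P$ is linear, and that $\mathbf P$ is the one-step transition kernel of the embedded chain $(x(\bar\tau_{1,k}),y(\bar\tau_{1,k}),z(\bar\tau_{1,k}))_{k\ge0}$, with $\mathbf P\phi(\bar p_1)=\int_{\bar\Gamma_1}\phi(u)\,\bar\gamma\bar\gamma_1(\bar p_1,du)$. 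These a priori bounds will not by themselves give ergodicity; the two genuinely new inputs needed are a smoothing (strong Feller) property and a topological irreducibility / support statement.

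First I would prove that $\mathbf P$ is \emph{strong Feller}: it maps $L^\infty(\bar\Gamma_1)$ into $C(\overline{\bar\Gamma}_1)$, and restricted to $C(\overline{\bar\Gamma}_1)$ is a continuous contraction. This is exactly where the regularity analysis of the previous sections is used. For $\phi\in L^\infty$, Theorem~\ref{chap6:prop:etaklimit} gives the interior solution $\eta$ of Problem~\ref{chap6:pb1}, and by \eqref{chap6:reg_int1}–\eqref{chap6:reg_int3} its trace $h=\eta(\cdot,\bar y,\cdot)$ on $\bar\Gamma$ lies in $H^1\cap\mathcal C^0((-L,L)\times(-Y,Y))$; feeding $h$ into Problem~\ref{chap6:pb2} gives the exterior solution $\zeta$, and since $\bar y<\bar y_1$ Proposition~\ref{chap6:prop:reg_ext1} yields $\zeta\in\mathcal C^0$ on a neighbourhood of $\bar\Gamma_1$ with $\|\zeta\|_{H_u(\delta)}$, $\|\zeta\|_{H^2(\Delta_u(\delta,\gamma))}$ controlled by $\|\phi\|_{L^\infty}$ via \eqref{chap6:reg_ext1}–\eqref{chap6:reg_ext3}. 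Hence $\mathbf P\phi=\zeta|_{\bar\Gamma_1}$ is continuous, and linearity together with the $L^\infty$ contraction estimate gives $\|\mathbf P\phi-\mathbf P\psi\|_{L^\infty}\le\|\phi-\psi\|_{L^\infty}$.

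Second I would establish topological irreducibility: for every $\bar p_1\in\bar\Gamma_1$ and every non-empty open $U\subset\bar\Gamma_1$ there is $k\ge1$ with $\mathbf P^k(\bar p_1,U)>0$. Between two consecutive instants where $|y|=\bar y_1$ the triple $(x,y,z)$ is driven by the two independent non-degenerate Wiener processes $w,\tilde w$ in the $(x,y)$ variables while $z$ moves through $\dot z=y$ in the elastic phase; the associated control system satisfies a Hörmander bracket condition away from the plastic switching set, so a Stroock–Varadhan support argument shows that the reachable set of $(x,z)$ over one excursion is all of $(-L,L)\times(-Y,Y)$ and that the hitting law of $\{|y|=\bar y_1\}$ charges every open subset of $\bar\Gamma_1$ (one may need two excursions to also fix the sign of $y$). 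Then: an invariant probability measure $\mu$ exists by Krylov–Bogolyubov (compactness of $\overline{\bar\Gamma}_1$ and the Feller property), it is unique and has full topological support by irreducibility plus the strong Feller property, and $\mathbf P^n(\bar p_1,\cdot)\to\mu$ by Doob's theorem; finally, compactness of $\overline{\bar\Gamma}_1$ together with the strong Feller property and the uniform bounds above upgrade this to a uniform minorization $\mathbf P^{n_0}(\bar p_1,\cdot)\ge\varepsilon\,\mu(\cdot)$ for some $n_0\ge1$, $\varepsilon>0$, independent of $\bar p_1$ — the Doeblin condition, which is the precise content of ``$\mathbf P$ is ergodic''.

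The main obstacle is the uniformity in the third step up to the degenerate pieces of the state space: the diffusion is degenerate in $z$ and switches to the reduced generators $B_\pm$ on $z=\pm Y$, so controlling the hitting distribution on $\bar\Gamma_1$ uniformly in the starting point — in particular for starting points with $z_0$ near $\pm Y$, where the process may first spend a plastic interval before $z$ can move — is exactly what forces the delicate local-regularity estimates \eqref{chap6:reg_int1}–\eqref{chap6:reg_int3} and \eqref{chap6:reg_ext1}–\eqref{chap6:reg_ext3}. Once the strong Feller property and the support statement are in hand, the remaining argument is the classical ergodic theorem for Markov operators on a compact space and is routine.
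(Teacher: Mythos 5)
Your scheme (strong Feller regularization plus topological irreducibility, then Krylov--Bogolyubov, Doob's theorem and a compactness upgrade to a Doeblin minorization) is a coherent classical route and, if completed, would give \eqref{chap6:ERGODIQUE}; but the two inputs that carry all the weight are asserted rather than proved, and neither follows from the results you cite. First, the strong Feller claim: the estimates \eqref{chap6:reg_ext1}--\eqref{chap6:reg_ext3} give continuity of $\zeta$ in the interior region (through $H_u(\delta)$ and $H^2(\Delta_u(\delta,\gamma))$, hence away from $z=\pm Y$) and, separately, continuity of the restriction of $\zeta$ to the two-dimensional plastic face $\Delta^+(\delta)$; they do \emph{not} give continuity of $\zeta$ as a function on the closed three-dimensional set at points with $z=\pm Y$, i.e. that the interior solution attains the boundary value $\zeta^+$ continuously across the degenerate face. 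Such points do belong to the effective state space of the embedded chain (the event $z(\bar\tau_{1,k})=\pm Y$ has positive probability), so your assertion that $\mathbf{P}$ maps $L^\infty(\bar\Gamma_1)$ into $C(\overline{\bar\Gamma}_1)$ is precisely the unproved boundary-regularity statement that the paper is careful to avoid: its argument never claims this continuity, and instead treats the delicate limit point with $z_\star y_\star = Y\bar y_1$ by the separate $H^2(\Delta^\pm(\delta))$ bound together with a diagonal choice of indices $\sigma_k$.

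Second, your irreducibility step is claimed via a H\"ormander bracket condition and a Stroock--Varadhan support argument, but the dynamics here are not a standard SDE: $x$ is an Ornstein--Uhlenbeck process reflected at $\pm L$, and $z$ obeys the variational inequality with sticky plastic phases at $z=\pm Y$ where the generator switches to $B_\pm$. No support theorem for such reflected, elasto-plastic hybrid systems is available off the shelf, and even granted a path-support statement one still has to convert it into positivity of the hitting law of $\{|y|=\bar y_1\}$ on every open subset of $\bar\Gamma_1$, which is an additional nontrivial step. The paper's proof is organized precisely to bypass both inputs: it verifies Doob's two-point criterion $\sup_{p,\tilde p,B}\lambda_{p,\tilde p}(B)<1$ directly, by contradiction, using only the uniform bounds \eqref{chap6:reg_ext1}--\eqref{chap6:reg_ext2} and \eqref{chap6:reg_ext3} (valid uniformly over indicator boundary data) to extract $\zeta^k\to\zeta^\star$, and then the maximum principle combined with the Neumann condition at $x=\pm L$ and the boundary condition at $z=Y$ to exclude $\zeta^\star(p^\star)=1$; that maximum-principle step is the analytic substitute for your irreducibility. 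So, as written, the proposal has two genuine gaps --- the strong Feller property up to the plastic faces and the support/irreducibility statement --- each requiring substantial new arguments beyond the quoted regularity results; the concluding Krylov--Bogolyubov/Doob/Dini portion is standard once those are supplied.
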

\begin{proof}
A Borel subset of $\bar{\Gamma}_1$ can be written as $B:= B_- \times  \lbrace y_1 \rbrace \cup B_+ \times  \lbrace -y_1 \rbrace$ with $B_+, B_-$ are Borel subsets of $(-L,L) \times (-Y,Y)$.
We have
\[
B_+= \lbrace (x,z) : (x,\bar{y}_1,z) \in B \rbrace \mbox{ and } B_-=\lbrace (x,z) : (x,-\bar{y}_1,z) \in B \rbrace
\]
also
\[
\mathbf{1}_{B_+}(x,z)=\mathbf{1}_{B}(x,\bar{y}_1,z) ; \quad \mathbf{1}_{B_-}(x,z)=\mathbf{1}_{B}(x,-\bar{y}_1,z).
\]
Consider $\phi_+=\mathbf{1}_{B_+}$ and $\phi_-=\mathbf{1}_{B_-}$ in the interior Dirichlet problem, then
 \[
 \mathbf{P}(\mathbf{1}_{B})(x,y,z)= \left\{ \begin{array}{ll} \zeta(x,\bar{y}_1,z) & \mbox{ if } y=\bar{y}_1\\[2mm] \zeta(x,-\bar{y}_1,z) & \mbox{ if } y=-\bar{y}_1.\\[2mm] \end{array} \right.
 \]
 Let $p,\tilde{p} \in \bar{\Gamma}_1$,  define
 \[
 \lambda_{p,\tilde{p}}(B) : = \mathbf{P}(\mathbf{1}_{B})(p)- \mathbf{P}(\mathbf{1}_{B})(\tilde{p}).
 \] 
 We will prove the ergodic property of the operator $\mathbf{P}$ in the four following steps.
 \begin{enumerate}
 \item {\bf Doob's criterion from \cite{Doob}.}\\
 The operateur $\mathbf{P}$ is ergodic if we prove the following Doob's criterion
 \begin{equation}\nonumber
 \sup \lambda_{p,\tilde{p}}(B) < 1, \quad \forall p,\tilde{p} \in \bar{\Gamma}_1 \mbox{ and } \forall B.
 \end{equation}

\item {\bf Negation of Doob's criterion.}\\
Suppose Doob's criterion is not verified. Then, there exists two sequences $p_k:=(x_k,y_k,z_k)$, $\tilde{p}_k:=(\tilde{x}_k,\tilde{y}_k,\tilde{z}_k)$ in $\bar{\Gamma}_1$ and a sequence of Borel subset $B_k$ such that 
\[
\lambda_{p_k,\tilde{p}_k}(B_k) \to 1.
\] 
Denote $\eta^k$ and $\zeta^k$ the solution of the interior and exterior Dirichlet problems with $\phi_+=\mathbf{1}_{B_{+k}}$ and $\phi_-=\mathbf{1}_{B_{-k}}$,  where $B_{+k}$ and $B_{-k}$ are deduced from $B_k$ as previously. We have
\[
\lambda_{p_k,\tilde{p}_k}(B_k) =  \zeta^k(x_k,y_k,z_k) - \zeta^k(\tilde{x}_k,\tilde{y}_k,\tilde{z}_k).
\]
Now, extracting a subsequence of $p_k$ and $\tilde{p}_k$, we deduce
\[
p_k \to p^\star \quad \mbox{and} \quad \tilde{p}_k \to \tilde{p}^\star \mbox{ in } \bar{\Gamma}_1.
\]
Also,
\[
p^\star = (x^\star,\bar{y}_1,z^\star) \quad \mbox{ or } \quad (x^\star,-\bar{y}_1,z^\star) 
\]
and
\[
\tilde{p}^\star = (\tilde{x}^\star,\bar{y}_1,\tilde{z}^\star) \quad \mbox{ or } \quad (\tilde{x}^\star,-\bar{y}_1,\tilde{z}^\star). 
\]
From (\ref{chap6:reg_ext1}), we know that
\begin{equation}\nonumber
\| \zeta^k \|_{H(\delta)} \leq M_{1,1} \mbox{ and }  \| \zeta^k \|_{H^2(\Delta(\delta,\gamma))} \leq M_{2,1}
\end{equation}
then
\begin{equation}\label{chap6:regularite111}
\begin{array}{ccl}
& & \zeta^k \in H(\delta) \subset \mathcal{C}^0(\Delta(\delta)),\\[2mm]
& & \zeta^k \to \zeta^\star \quad \mbox{ in } \quad H(\delta) \quad \mbox{weakly},\\[2mm]
& & \zeta^\star \in \mathcal{C}^0(\Delta(\delta)),\\[2mm]
\mbox{ and } & & \\[2mm] 
 & & \zeta^k \to \zeta^\star \quad \mbox{ in } \quad \mathcal{C}^0(\Delta(\delta,\gamma)).\\[2mm]
\end{array}
\end{equation}
From \eqref{chap6:reg_ext2}, we know that
\begin{equation}\nonumber
\| \zeta^k \|_{H^2(\Delta^\pm(\delta))} \leq M_{3,1} ,
\end{equation}
so, we obtain
\begin{equation}\label{chap6:regularite222}
\zeta^k \to \zeta^\star \quad \mbox{ in } \quad \mathcal{C}^0(\Delta^\pm(\delta)).\\[2mm]
\end{equation}
\begin{proposition} Under the hypothesis that $P$ is not ergodic, we have  $\zeta^\star(p^\star)=1.$
\end{proposition}
\begin{proof}
We must have $\lim_{k \to \infty} \zeta^k(p_k)=1$.
\begin{enumerate}
\item
If $|z_\star| < Y$ or $z_\star y_\star = - Y \bar{y}_1$ then (\ref{chap6:regularite111}) implies $\lim_{k \to \infty} \zeta^k(p_k) = \zeta^\star(p_\star)$.
\item
If $z_\star y_\star = Y \bar{y}_1$ then (\ref{chap6:regularite222}) implies $\lim_{k \to \infty} \zeta^k(x_k, \pm \bar{y_1}, \pm Y) = \zeta^\star(p_\star)$. 
Indeed, as for all $k \geq 0, \zeta^k$ is continuous, there exists $\sigma_k \geq k$ such that $|\zeta^k(p_{\sigma_k}) - \zeta^k(x_{\sigma_k}, \pm \bar{y}_1,\pm Y) | \leq 2^{-k}$. Also, $\lim_{k \to \infty} \zeta^k(p_{\sigma_k}) = \lim_{k \to \infty} \zeta^k(x_{\sigma_k}, \pm \bar{y}_1, \pm Y)=\zeta^\star(p^\star)$. 
\end{enumerate}
\end{proof}
\item {\bf Contradiction.}\\
Suppose $p^\star=(x^\star,\bar{y}_1,z^\star)$ and set
\[
 \Xi_1:= \bar{\Gamma}_1 \cap \lbrace x = \pm L \rbrace ; \quad  \Xi_2:= \bar{\Gamma}_1 \cap \lbrace z = Y \rbrace.
\]
Maximum principle for parabolic operator applied to $\zeta^*$ implies $p^\star \in \Xi_1 \cup \Xi_2$. Then $p^\star$ cannot be in $\Xi_1$ because of the Neuman condition and $p^\star$ cannot be in $\Xi_2$ because of the boundary condition $z=Y$. A similar argument yields a contradiction with $p^\star=(x^\star,-\bar{y}_1,z^\star)$.
\item {\bf Conclusion.}\\
From ergodic theory, there exists a unique probability mesure $\gamma_\star = (\gamma_\star^-,\gamma_\star^+)$ on $\bar{\Gamma}_1$ and $K,\rho >0$ such that 
\begin{equation}\label{chap6:ERGODIQUE}
\forall n \geq 0, \quad \left | \mathbf{P}^n \phi - \iint_{\bar{\Gamma}_1^-} \phi(r,s) \d \gamma_\star^-(r,s) - \iint_{\bar{\Gamma}_1^+} \phi(r,s) \d \gamma_\star^+(r,s) \right | \leq K \| \phi \| \exp(-\rho n).
\end{equation}
\end{enumerate}
\end{proof}

\section{The nonhomogeneous interior Dirichlet problem} 

\setcounter{equation}{0}
Consider $f$ a bounded function on $\Delta_1$, we want to define $\mathbb{E}_{(x,y,z)} \big{(} \int_0^{\bar{\tau}_1} f(x(s),y(s),z(s))\d s \big{)}$ as the solution of the non-homogeneous interior Dirichlet Problem \ref{chap6:pb3}:
\begin{problem}\label{chap6:pb3}
Find $\chi \in L^{\infty}(\Delta_1) \cap \mathcal{C}^0(\Delta_1^\epsilon), \forall \epsilon >0$ such that
\begin{equation}\nonumber
\begin{array}{rcccl}
A \chi + f                              & = &  0    & \mbox{  in }   &   \Delta_1,\\[2mm]
B_{\pm}\chi + f                & = &  0     & \mbox{  in }  &   \Delta_1^{\pm},\\[2mm]
\chi(x, \pm \bar{y}_1,z)     & = &  0     & \mbox{  in }   &    (-L,L) \times (-Y,Y),\\[2mm]
\chi_x(\pm L,y,z)                & = &  0     & \mbox{ in }   &   (-\bar{y}_1,\bar{y}_1) \times (-Y,Y).
\end{array}
\end{equation}
\end{problem}
Consider
\[
\Phi(x,y,z) := \exp \lambda (c_0 k z^2 + c_0 y^2) ; \quad \lambda \geq 1. 
\]
\begin{theorem}
The Problem \ref{chap6:pb3} has a unique solution. Moreover, we have
\[
 \| \chi \|_{L^\infty} \leq  \exp \lambda (c_0 k Y^2 + c_0 \bar{y}_1^2),
\]
where $\lambda$ depends on $f$.
\end{theorem}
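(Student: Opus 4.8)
The plan is to adapt the approximation machinery of Section~2 to the presence of the bounded source $f$, the only genuinely new ingredient being an a priori $L^\infty$ estimate furnished by the weight $\Phi$. Uniqueness is immediate: the difference of two solutions solves the homogeneous interior Dirichlet Problem~\ref{chap6:pb1} with zero boundary data (same $\bar y_1$), hence vanishes by Theorem~\ref{chap6:prop:etaklimit}. For existence I would run the same three steps as in Section~2 --- a variational inequality on a closed convex subset of $H^1(\Delta_1)$ adapted to the two--dimensional boundary conditions, solved by a Schauder fixed point for $A^\epsilon=A+\tfrac{\epsilon}{2}\partial_{zz}$; then the weighted estimate $\|\chi^\epsilon_z\pi\|_{L^2}\le C_\pi$ with $\pi(y)=y^p\theta(y)^q$ (Lemmas~\ref{chap6:lem:l2}--\ref{chap6:lem:l3}); then $\epsilon\to0$; then a smoothing of $f$ in $L^\infty$. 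Here $f$ merely adds to the variational inequality the continuous linear form $v\mapsto\int_{\Delta_1}fv$ and, in the weighted estimates, terms such as $\int_{\Delta_1}f\pi^2\chi^\epsilon$, all controlled by $\|f\|_{L^\infty}$ together with the uniform bound $\|\chi^\epsilon\|_{L^\infty}\le C$ established below, $C$ playing the role that $\|\phi\|_{L^\infty}$ plays in Section~2. Thus everything reduces to the a priori estimate.

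The function $\Phi(x,y,z)=\exp\lambda(c_0kz^2+c_0y^2)$ is, for $\lambda$ large, a supersolution of the whole degenerate operator (recall $c_0,k>0$). Using $\Phi_x=\Phi_{xx}=0$, $\Phi_y=2\lambda c_0y\,\Phi$, $\Phi_{yy}=(2\lambda c_0+4\lambda^2c_0^2y^2)\Phi$ and $\Phi_z=2\lambda c_0kz\,\Phi$ one computes
\[
A\Phi=\lambda c_0\,\Phi\,\bigl(1-2\beta xy+2(\lambda-1)c_0y^2\bigr),
\]
the essential point being that the contribution $-2\lambda c_0kyz\,\Phi$ of $-kz\,\Phi_y$ cancels exactly against the contribution $+2\lambda c_0kyz\,\Phi$ of $y\,\Phi_z$ --- which is precisely why the exponent is taken to be $c_0kz^2+c_0y^2$. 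Similarly $B_\pm\Phi=\lambda c_0\Phi\bigl(1-2\beta xy+2(\lambda-1)c_0y^2\mp2kYy\bigr)$ on $\Delta_1^\pm$, the extra bracket term $\mp2kYy$ being $\le0$ there, and $A^\epsilon\Phi=A\Phi+\epsilon(\lambda c_0k+2\lambda^2c_0^2k^2z^2)\Phi\ge A\Phi$. Since $|x|\le L$ and $|y|\le\bar y_1$ on $\overline{\Delta_1}$, completing the square in $y$ shows each of these three brackets is $\ge\tfrac12$ as soon as $\lambda$ exceeds a constant depending only on $\alpha,\beta,c_0,k,Y,L,\bar y_1$; with $\Phi\ge1$ this yields $A^\epsilon\Phi\ge A\Phi\ge\tfrac12\lambda c_0$ in $\Delta_1$ and $B_\pm\Phi\ge\tfrac12\lambda c_0$ on $\Delta_1^\pm$. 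Taking in addition $\lambda\ge 2\|f\|_{L^\infty}/c_0$ gives $\Lambda\Phi\ge\|f\|_{L^\infty}$ on $\Delta_1\cup\Delta_1^+\cup\Delta_1^-$ --- the asserted dependence of $\lambda$ on $f$.

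If $f\equiv0$ then $\chi\equiv0$. Otherwise, since $\Phi$ does not depend on $x$ the reflection of $x(t)$ at $\pm L$ contributes nothing and $\d\Phi=\Lambda\Phi\,\d t+\Phi_y\,\d\tilde w$ along the process, so Dynkin's formula up to $\bar\tau_1\wedge T$, together with $\Lambda\Phi\ge\|f\|_{L^\infty}$ and $\Phi\le\exp\lambda(c_0kY^2+c_0\bar y_1^2)$ on $\overline{\Delta_1}$, gives
\[
\exp\lambda(c_0kY^2+c_0\bar y_1^2)\ \ge\ \Phi(p)+\|f\|_{L^\infty}\,\mathbb{E}_p[\bar\tau_1\wedge T].
\]
Letting $T\to\infty$ yields $\mathbb{E}_p[\bar\tau_1]\le\exp\lambda(c_0kY^2+c_0\bar y_1^2)/\|f\|_{L^\infty}<\infty$; hence $\bar\tau_1<\infty$ a.s., the representation $\chi(p)=\mathbb{E}_p\bigl[\int_0^{\bar\tau_1}f(x(s),y(s),z(s))\,\d s\bigr]$ (obtained from It\^o applied to $\chi$ with its local regularity and the boundary conditions of Problem~\ref{chap6:pb3}) is legitimate, and $|\chi(p)|\le\|f\|_{L^\infty}\,\mathbb{E}_p[\bar\tau_1]\le\exp\lambda(c_0kY^2+c_0\bar y_1^2)$. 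Equivalently, at the PDE level, $\Phi+\chi$ and $\Phi-\chi$ are subsolutions (since $A\Phi\ge\|f\|_{L^\infty}\ge\pm f$, and likewise for $B_\pm$) equal to $\Phi$ on $\{y=\pm\bar y_1\}$ with vanishing normal derivative on $\{x=\pm L\}$, so the maximum principle of Section~2 gives $\Phi\pm\chi\le\max_{\overline{\Delta_1}}\Phi$, i.e.\ $\|\chi\|_{L^\infty}\le\exp\lambda(c_0kY^2+c_0\bar y_1^2)$; the same argument applied to $\chi^\epsilon$ (with $A^\epsilon$, resp.\ the $\epsilon$--diffusion) bounds it uniformly in $\epsilon$ and closes the existence scheme.

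I expect the main obstacle to be not the barrier --- which is short once the cancelling exponent is guessed --- but the bookkeeping of the approximation: verifying that \emph{every} estimate of Section~2 (the $H^1$ bound from the variational inequality, the weighted bounds on $\chi^\epsilon_z\pi$ and on the boundary traces, the identifications in $H^{-1}$ of the limiting equation and of the conditions $\chi(\cdot,\cdot,\pm Y)\in K_{\pm Y,C}$) survives the addition of the bounded source, which it does because $f$ only generates terms of the type $\int fv$, $\int f\pi^2\chi^\epsilon$, absorbed by $\|f\|_{L^\infty}$ and the uniform $L^\infty$ bound just established. A secondary point requiring care is that $\Phi$ must be a supersolution \emph{simultaneously} for $A^\epsilon$ inside and for $B_\pm$ on $\Delta_1^\pm$; this is automatic because $\tfrac{\epsilon}{2}\Phi_{zz}\ge0$ and the boundary defect $\mp2kYy$ in the bracket is of favourable sign on $\Delta_1^\pm$ and of lower order in $\lambda$ than $2(\lambda-1)c_0y^2$.
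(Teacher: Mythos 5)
Your proposal is correct and follows essentially the paper's own argument: the barrier $\Phi=\exp\lambda(c_0kz^2+c_0y^2)$ with the exact cancellation of the $yz$ cross term, the supersolution inequalities $A\Phi\ge\|f\|_{L^\infty}$ in $\Delta_1$ and $B_\pm\Phi\ge\|f\|_{L^\infty}$ on $\Delta_1^\pm$ for $\lambda$ large, the maximum principle applied to $\Phi\pm\chi$ to get the $L^\infty$ bound, uniqueness reduced to the homogeneous interior problem, and existence by the regularization scheme of Section 2. The only minor deviations are your probabilistic (Dynkin) aside, which the paper does not use, and your absorption of the $-2\beta xy$ term by completing the square in $y$ (enlarging $\lambda$) where the paper instead imposes $2\bar y_1<\tfrac{1}{2\beta L}$; note also that $\mp 2kYy\le 0$ on $\Delta_1^\pm$ is the \emph{unfavourable} sign, but it is indeed absorbed exactly as you say since it is lower order in $\lambda$.
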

\begin{proof}
The uniqueness of a solution of Problem $\ref{chap6:pb3}$ is argued as the uniqueness for the homogeneous Dirichlet interior problem. The existence can be proven by the regularization technique used previously. Now, we give a  $L^\infty$ bound for $\chi$. We have
\[
\begin{array}{l}
\phi_x=0 ; \quad \phi_{xx}=0\\[2mm] 
\phi_y = 2y c_0 \lambda \phi ; \quad \phi_{yy} = 2 c_0 \lambda \phi + (2 c_0 \lambda y)^2 \phi\\[2mm]
\phi_z = 2z c_0 k \lambda \phi\\[2mm] 
\end{array}
\]
and
\begin{eqnarray*}
y \phi_z - \alpha x \phi_x - \phi_y (\beta x+c_0 y +kz) + \frac{1}{2} \phi_{yy} + \frac{1}{2} \phi_{xx} 
& = & - 2\beta c_0  \lambda x y \phi - 2 c_0^2  \lambda y^2 \phi + c_0 \lambda \phi  + 2 (c_0\lambda y)^2\\[2mm]
& = &  c_0 \lambda \phi - c_0\lambda 2\beta x y   \phi + 2  (c_0y)^2 \lambda(\lambda -1) \phi \\[2mm]
& \geq &  c_0 \lambda (1 - 2\beta L \bar{y}_1 ) \phi + 2   (c_0 y)^2 \lambda (\lambda -1) \phi \\[2mm]
& \geq & \| f \|  
\end{eqnarray*}
where $\bar{y}_1 \mbox{ can be chosen as } 2 \bar{y}_1 < \frac{1}{2 \beta L},  \mbox{ and } \lambda \geq \max(1, \frac{\| f \|}{c_0(1-2 \beta L \bar{y}_1)})$.\\
Using that $\phi(x,y,\pm Y) = \exp \lambda (c_0 k Y^2)\exp \lambda (c_0^2 y^2)$ we obtain for $z=Y, 0< y < \bar{y}_1$.
\begin{eqnarray*}
 -\alpha x \phi_x - \phi_y (\beta x+c_0 y +kY) + \frac{1}{2} \phi_{yy} + \frac{1}{2} \phi_{xx} 
& = & -\phi_y(x,y,Y)(c_0y + kY + \beta x) + \frac{1}{2}\phi_{yy}(x,y,Y)\\[2mm]
& = & \lambda c_0 \phi(x,y,Y)\big{(}1 - 2 (k Y + \beta x)y + 2 c_0(\lambda -1)y^2 \big{)}\\[2mm]
& \geq & \lambda c_0 \phi(x,y,Y)\big{(}1 - 2 (k Y + \beta L)y + 2 c_0(\lambda -1)y^2 \big{)}\\[2mm]
& \geq & \lambda c_0 \big{(} 1- \frac{(kY+\beta L)^2}{2 c_0 (\lambda -1)} \big{)}\\[2mm]
& \geq & \| f \|,  \\[2mm]
\end{eqnarray*}
if $\lambda \geq \max(2 \frac{\| f \|}{c_0},1+\frac{(kY+\beta L)^2}{c_0})$.
Similar estimates hold for $z=-Y, -\bar{y}_1 < y < 0$.
Consider $u:=\phi-\chi$, then $u$ satisfies the following inequalities
\begin{equation}
\label{chap6:uphi1}
Au  \geq  0  \mbox{ in }  \Delta_1 ; \quad B_{\pm} u  \geq  0  \mbox{ in }  \Delta_{1}^{\pm} \\[2mm]
\end{equation}
and the following boundary conditions
\begin{equation}
\label{chap6:uphi2}
\begin{array}{rclll}
u(x, \pm \bar{y}_1,z) & = & \exp(\lambda \big{(} c_0 k z^2 + c_0 \bar{y}_1^2 \big{)}) & \mbox{ in } (-L,L) \times (-Y,Y),\\[2mm]
u_x(\pm L ,y,z)          & = & 0 & \mbox{ in } ( -\bar{y}_1, \bar{y}_1) \times (-Y,Y).\\[2mm]
\end{array}
\end{equation}
The maximum of $u$ cannot be attained in $\Delta_1$ or on the boundaries $z= \pm Y,x= \pm L$. It can only be attained for $y=\bar{y}_1$ or $y=\bar{y}_1$.
Hence,
\[
u(x,y,z) \leq \exp(\lambda(c_0kY^2 + c_0\bar{y}_1^2))
\]
which implies
\[
\chi(x,y,z) \geq -\exp(\lambda(c_0kY^2 + c_0\bar{y}_1^2)).
\]
Now, consider $v=-(\phi+\chi)$, then $v$ satisfies inequalities (\ref{chap6:vphi1})
\begin{equation}
\label{chap6:vphi1}
Av  \leq  0  \mbox{ in }  \Delta_1 ; \quad B_{\pm} v  \leq  0  \mbox{ in }  \Delta_{1}^{\pm} \\[2mm]
\end{equation}
and the boundary condition (\ref{chap6:vphi2})
\begin{equation}
\label{chap6:vphi2}
\begin{array}{rclll}
v(x, \pm \bar{y}_1,z) & = & -\exp(\lambda \big{(} c_0 k z^2 + c_0 \bar{y}_1^2 \big{)}) & \mbox{ in } (-L,L) \times (-Y,Y),\\[2mm]
v_x(\pm L ,y,z)          & = & 0 & \mbox{ in } ( -\bar{y}_1, \bar{y}_1) \times (-Y,Y).\\[2mm]
\end{array}
\end{equation}
Again, the minimum of $v$ cannot be attained in $\Delta_1$ or on the boundaries $z=\pm Y,x=\pm L$. Hence
\[
v(x,y,z) \geq -\exp(\lambda(c_0kY^2 + c_0\bar{y}_1^2))
\]
which implies
\[
\chi(x,y,z) \leq \exp(\lambda(c_0kY^2 + c_0\bar{y}_1^2)).
\]
Now, let us derive some estimates on derivatives. Let $M:=\exp(\lambda(c_0kY^2 + c_0\bar{y_1}^2))$. Using the test function $m_0(x,y,z)=\exp(-c_0(y^2+kz^2))\exp(-\alpha x^2)$ we obtain now a priori estimates on the partial derivatives of $\chi$.
We test (\ref{chap6:pb3}) with $m_0 \chi$
\begin{eqnarray*}
 \int_{\Delta_1} \chi_y^2 m_0 +  \int_{\Delta_1} \chi_x^2 m_0 & = &  \int_{\partial{\Delta}_1, z=Y} y \chi^2 m_0 -  \int_{\partial {\Delta}_1, z=-Y} y \chi^2 m_0  + 2 \int_{\Delta_1} f \chi m_0 -  \int_{\Delta_1} c_0 \beta x \chi^2 m_0\\[2mm]
& \leq &  \int_{\partial {\Delta}_1, z=Y,y>0} y \chi^2 m_0 - \int_{\partial {\Delta}_1, z=-Y,y<0} y \chi^2 m_0  + 2 \int_{\Delta_1} f \chi\\[2mm]
& \leq & C(M).
\end{eqnarray*}
Hence,
\[
\int_{\Delta_1} \chi_y^2 m_0 \leq C(M) \quad; \quad  \int_{\Delta_1} \chi_x^2 m_0 \leq C(M).
\]
We have then, 
\[
\int_{\Delta_1} (y^2 \chi_z)^2 m_0 \leq C(M) \quad \mbox{ and } \quad \int_{\Delta_{1}} (y^2 \chi_{yy})^2 m_0 \leq C(M).
\]
The function $\chi$ is smooth outside a neighborhood of $y=0$.
\end{proof}
\section{The nonhomogeneous exterior Dirichlet problem}
Consider the function
\[
\Psi(y) := \log(y) + K , \mbox{ $K$ such that } \log(\bar{y}_1) + K >0
\]
and the space of functions
\[
X_\Psi^\infty := \lbrace u \psi,\quad  u \in L^{\infty}(\Delta_u)  \rbrace.
\]
We want to define $\mathbb{E}_{(x,y,z)} \big{(} \int_0^{\bar{\tau}} f(x(s),y(s),z(s)) ds \big{)}$ as the solution of the nonhomogeneous exterior Dirichlet Problem \ref{chap6:pb4}:
\begin{problem}\label{chap6:pb4}
Find $\xi \in X_\Psi^\infty \cap \mathcal{C}^0(\Delta_u^\epsilon), \forall \epsilon>0$ such as
\begin{equation}\nonumber
\begin{array}{rclll}
A \xi + f               & = &  0  & \mbox{ in } & \Delta_u,\\[2mm]
B_{+}\xi + f      & = & 0   & \mbox{ in } & \Delta^+,\\[2mm]
\xi(x,\bar{y},z)    & = & 0  & \mbox{ in } & (-L,L) \times (-Y,Y),\\[2mm]
\xi_x(\pm L,y,z)  & = & 0  & \mbox{ in } & (\bar{y}, + \infty) \times (-Y,Y).\\[2mm]
\end{array}
\end{equation}
\end{problem}
\begin{theorem}
The Problem \ref{chap6:pb4} has a unique solution. Moreover, we have $-\psi(y) \leq  \xi(x,y,z)  \leq  \psi(y).$
\end{theorem}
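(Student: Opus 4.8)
\emph{Strategy.} As for the homogeneous exterior problem \eqref{chap6:cp_e10}, we first solve the boundary equation $B_{+}\xi + f = 0$ on $\Delta^{+}=(-L,L)\times(\bar y,\infty)\times\{Y\}$, with $\xi_{x}(\pm L,\cdot)=0$ and $\xi(x,\bar y)=0$, as a mixed Dirichlet--Neumann problem in the weighted space $H_{2}^{1}(\Delta^{+})$, \emph{exactly} as $\zeta^{+}$ was constructed for \eqref{chap6:bc_e2}: the only change is the bounded source $f$ on the right-hand side of the variational inequality, which does not affect the coercivity estimate \eqref{chap6:bc_e4}. This provides the Dirichlet datum $\xi^{+}(x,y):=\xi(x,y,Y)$ on the plastic face. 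One then treats $A\xi+f=0$ as a Cauchy problem in $z$, after adding the regularizing term $\tfrac{\epsilon}{2}\partial_{zz}$ and using the auxiliary weight $\pi(y)=y^{p}\theta^{q}$ to control $\partial_{z}\xi^{\epsilon}$, as in Section~2. On each truncated domain $\Delta_{u}^{R}:=(-L,L)\times(\bar y,R)\times(-Y,Y)$ (with the artificial condition $\xi=0$ at $y=R$) the existence of $\xi^{\epsilon,R}$ follows from the variational machinery already developed for the Cauchy problem, and one then passes to the limit $R\to\infty$ and $\epsilon\to 0$.

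\emph{The barrier.} Since $\Psi=\Psi(y)$ with $\Psi_{y}=1/y$, $\Psi_{yy}=-1/y^{2}$ and $\Psi_{x}=\Psi_{xx}=\Psi_{z}=0$,
\[
A\Psi = -\frac{1}{2y^{2}} - \frac{\beta x + c_{0}y + kz}{y}\;\le\;-c_{0}+\frac{\beta L + kY}{\bar y}\;\le\;-\frac{c_{0}}{2},\qquad y>\bar y,
\]
provided $\bar y$ is taken large enough, and the same bound holds for $B_{+}\Psi$. Exactly as in the nonhomogeneous interior problem (Problem~\ref{chap6:pb3}), where $\Phi$ carried a factor $\lambda=\lambda(\|f\|_{\infty})$, we understand $\Psi$ with an implicit factor $\lambda=\max(1,2\|f\|_{\infty}/c_{0})$ and $K$ large enough that $\Psi>0$ on $[\bar y,\infty)$, so that $A\Psi+f\le 0$ and $B_{+}\Psi+f\le 0$. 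Consequently $u:=\Psi-\xi^{\epsilon,R}$ satisfies $A^{\epsilon}u\le 0$ in $\Delta_{u}^{R}$, $B_{+}u\le 0$ on the face $z=Y$, $u_{x}(\pm L,\cdot)=0$, and $u\ge 0$ on the Dirichlet faces $\{y=\bar y\}$ and $\{y=R\}$; the maximum principle for the (parabolic-in-$z$) operator, applied as in the uniqueness argument for \eqref{chap6:bc_e2} and as in Problem~\ref{chap6:pb3}, yields $u\ge 0$, i.e.\ $\xi^{\epsilon,R}\le\Psi$. Arguing with $\Psi+\xi^{\epsilon,R}$ gives $\xi^{\epsilon,R}\ge-\Psi$. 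These bounds are uniform in $\epsilon$ and $R$, hence survive in the limit: the limit $\xi$ solves Problem~\ref{chap6:pb4} and satisfies $-\Psi(y)\le\xi(x,y,z)\le\Psi(y)$; in particular $\xi\in X_{\Psi}^{\infty}$.

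\emph{Uniqueness.} If $\xi_{1},\xi_{2}\in X_{\Psi}^{\infty}$ are two solutions, $\xi:=\xi_{1}-\xi_{2}$ solves the homogeneous problem and satisfies $|\xi|\le C(\log y+K)$. Put $u:=\xi\exp(-\tfrac{c_{0}}{2}y^{2})$. Then $u$ extends continuously by $0$ as $y\to\infty$, and $u$ solves an equation of the same type (substitution as in the uniqueness proof for \eqref{chap6:bc_e2}) whose zeroth-order coefficient equals $c_{0}\bigl(\tfrac12-\tfrac{c_{0}}{2}y^{2}-y(\beta x+kz)\bigr)$, which is strictly negative for $y\ge\bar y$ when $\bar y$ is large. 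A positive interior maximum of $u$ would contradict the maximum principle, and is also excluded on the face $z=Y$ by $B_{+}$, on $\{x=\pm L\}$ by the Neumann condition, and at $\{y=\bar y\}$ and $\{y=\infty\}$ since $u=0$ there; likewise for a negative minimum. Hence $u\equiv 0$, so $\xi\equiv 0$.

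\emph{Main obstacle.} The barrier comparison is elementary once $\bar y$ is chosen large; the genuine difficulty is the passage to the double limit $R\to\infty$, $\epsilon\to 0$ for the \emph{degenerate} operator. One must reproduce, uniformly in $R$, the weighted $H^{1}$-estimates on $\partial_{z}\xi^{\epsilon}$ obtained in Section~2 via $\pi(y)=y^{p}\theta^{q}$ together with the local-regularity estimates of Sections~2--3, now on the unbounded exterior domain, while keeping the $L^{\infty}$ bound $|\xi^{\epsilon,R}|\le\Psi$. This is where the bulk of the work lies; it is carried out exactly as in the homogeneous exterior Dirichlet problem, the extra bounded term $f$ playing no essential role.
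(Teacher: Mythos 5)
Your proof is correct and follows the paper's strategy in its main lines, with two divergences worth noting. For existence, the paper does not redo the boundary-equation-plus-Cauchy-problem machinery of the homogeneous exterior problem: it simply truncates at $y=R$, solves the problem on the truncated domain with the artificial condition $\xi^R(x,R,z)=0$, uses the barrier $u^R=\xi^R-\psi$ (with $\psi$ carrying a factor $\gamma$ chosen so that $\gamma c_0/2\ge\|f\|$ and $\bar y$ large) to get $\xi^R\le\psi$, and exploits monotonicity of $R\mapsto\xi^R$ (after splitting $f$ into positive and negative parts) to pass to the limit; your double limit in $(\epsilon,R)$ with weighted estimates on $\partial_z\xi^{\epsilon}$ is heavier but reaches the same point, and your barrier computation ($A\Psi\le-\tfrac{c_0}{2}$ for $\bar y\ge 2(\beta L+kY)/c_0$, then scaling by $\lambda\sim\|f\|_\infty/c_0$) is essentially the paper's, in fact slightly more careful since the paper drops the $\beta x/y$ term in its displayed estimate. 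For uniqueness, the paper writes $\xi=w\psi^{\alpha}$ with $\alpha>1$ and $\alpha-1$ small, so that $w\to0$ as $y\to\infty$ and the zeroth-order coefficient of the equation for $w$ is strictly negative, then concludes by the maximum principle; you instead use the Gaussian weight $u=\xi\exp(-\tfrac{c_0}{2}y^2)$ (the substitution the paper employs for uniqueness of the boundary equation \eqref{chap6:bc_e2}), which yields the negative zeroth-order coefficient $c_0\bigl(\tfrac12-\tfrac{c_0}{2}y^2-y(\beta x+kz)\bigr)$ and even faster decay at infinity, since $|\xi|\le C(\log y+K)$. Both weights serve the same purpose: yours buys a simpler decay argument, the paper's stays within the natural growth class $X_\Psi^{\infty}$ without invoking a Gaussian. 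Like the paper, you leave implicit the maximum-principle discussion at the degenerate face $z=-Y$ for $y>0$, where only the transport term $y\partial_z$ acts and one must use the one-sided sign of $u_z$ at a boundary extremum; that is at the same level of rigor as the published argument and is not a gap specific to your proposal.
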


\begin{proof}
We justify uniqueness of the solution taking $f=0$ and setting $\xi$
\[
\xi=w \psi^\alpha, \quad \alpha>1.
\] 
We show $w=0$. In particular, $w$ satisfies 
\begin{eqnarray*}
\xi          & = & w \psi^\alpha\\[2mm]
\xi_x      & = & w_x \psi^\alpha ; \quad \xi_y=w_y \psi^\alpha + w \alpha \psi_y \psi^{\alpha-1} ; \quad \xi_z = w_z \psi^\alpha\\[2mm]
\xi_{xx} & = &  w_{xx} \psi^\alpha ; \quad \xi_{yy}= w_{yy} \psi^\alpha + 2 w_{y} \alpha \psi_y \psi^{\alpha-1} + w \alpha \big{(} \psi_{yy} \psi^{\alpha -1} + (\psi_y)^2 \psi^{\alpha-2} (\alpha-1) \big{)}\\
\end{eqnarray*}
then
\begin{eqnarray*}
& & \frac{1}{2} w_{xx} \psi^\alpha + \frac{1}{2} \lbrace w_{yy} \psi^\alpha + 2 w_{y} \alpha \psi_y \psi^{\alpha-1}  w \alpha \big{(} \psi_{yy} \psi^{\alpha -1} + (\psi_y)^2 \psi^{\alpha-2} (\alpha-1) \rbrace\\
& & - (c_0y +kz +\beta x) \lbrace   w_y \psi^\alpha + w \alpha \psi_y \psi^{\alpha-1} \rbrace -\alpha x w_x \psi^\alpha + y w_z \psi^\alpha =0.
\end{eqnarray*}
Collecting terms related to $\psi^\alpha$, we obtain
\begin{eqnarray*}
& & \frac{1}{2} w_{xx}  + \frac{1}{2} \lbrace w_{yy}  + 2 w_{y} \alpha \frac{\psi_y}{ \psi } + w \alpha \big{(} \frac{\psi_{yy}}{\psi} + \frac{(\psi_y)^2}{ \psi^2} (\alpha-1)  \rbrace\\
& &  + (-c_0y -kz -\beta x) \lbrace   w_y  + w \alpha \frac{\psi_y}{ \psi} \rbrace -\alpha x w_x + y w_z =0,
\end{eqnarray*}
which implies for $z \in (-Y,Y) \mbox{ and } y > \bar{y}$,
\begin{eqnarray*}
& &\frac{1}{2} w_{xx}  + \frac{1}{2}  w_{yy}  +  w_{y} \lbrace \alpha \frac{\psi_y}{ \psi } -c_0 -kz -\beta x \rbrace+ \frac{w \alpha}{\psi} \lbrace \frac{1}{2}\psi_{yy}  + \frac{(\alpha-1)}{2} \frac{(\psi_y)^2}{ \psi}\\
& &  -(c_0y +kz +\beta x) \alpha \psi_y \rbrace -\alpha x w_x + y w_z =0
\end{eqnarray*}
and for $z=Y ; \quad y > \bar{y}$,
\begin{align*}
& \frac{1}{2} w_{xx}  + \frac{1}{2}  w_{yy}  +  w_{y} \lbrace \alpha \frac{\psi_y}{ \psi } -c_0 -kY -\beta x \rbrace+ \frac{w \alpha}{\psi} \lbrace \frac{1}{2}\psi_{yy}  + \frac{(\alpha-1)}{2} \frac{(\psi_y)^2}{ \psi}\\
& -(c_0y +kY +\beta x) \alpha \psi_y \rbrace -\alpha x w_x + y w_z =0
\end{align*}
and
\[
w(x,\bar{y},z)=0.
\]
We can find $\alpha>1$ with $\alpha -1$ small so that
\[
\frac{1}{2} \psi_{yy} - (c_0y+kz+\beta x) +(\alpha-1) \frac{(\psi_y)^2}{\psi} \leq -\gamma \frac{c_0}{2} + \frac{ (\alpha-1) \gamma }{ y^2 (\log(y) + K)} < 0.
\]
Since $w \to 0$ as $y \to \infty$, a positive maximum can be attained only for a finite $y^\star$.  But this is impossible from the equation. So $w$ is negative,$w \leq 0$. But, $w$ is also positive, $w \geq 0$. Hence $\xi =0$ and uniqueness follows.\\
Now, existence is demonstrated by the following approximation
\[
\begin{array}{l}
A \xi^R+ f=0 , \mbox{ in } \Delta_R,\\[2mm]
B_+ \xi^R + f=0 , \mbox{ in } \Delta_R^+,\\[2mm]
\xi(x,\bar{y},z)=0 , \quad \xi(x,R,z)=0,\\[2mm]
\xi_x(\pm L,y,z)=0.
\end{array}
\]
Using $u^R=\xi^R-\psi$, we obtain
\[
\begin{array}{l}
Au^R \geq 0, \mbox{ in } \Delta_R,\\[2mm]
B_+ u^R \geq 0, \mbox{ in } \Delta_R^+,\\[2mm]
u^R(x,\bar{y},z)= - \psi(\bar{y}),\\[2mm]
u^R_x(\pm L,y,z)=0.
\end{array}
\]
Necessarily, $u^R \leq 0$.
The sequence $\xi^R$ is monotone increasing and converges towards a solution.
Let us show estimates on $u$. Suppose $f \geq 0$, we will show that $0 \leq \xi \leq \psi$.
Then,
\begin{eqnarray*}
\psi_y(-c_0y - kz - \beta x) + \frac{1}{2} \psi_{yy} &  =  & \gamma [ \frac{1}{y}(-c_0y-kz-\beta x) - \frac{1}{2 y^2}]\\[2mm]
 & \leq & \gamma [ \frac{1}{y}(-c_0y-kz) - \frac{1}{2 y^2}]\\[2mm]
 & \leq & \gamma [ \frac{1}{y}(-c_0y-kz) - \frac{1}{2 y^2}]\\[2mm]
 & \leq & - \gamma \frac{c_0}{2}, \quad \mbox{ if }\bar{y}> \frac{2kY}{c_0}.\\[2mm]
\end{eqnarray*}
Define $\gamma$ such that $\frac{\gamma c_0}{2} \geq \| f \|$.
We then have, setting $u = \xi-\psi$,
\[
\begin{array}{l}
A u \geq 0, \mbox{ in } \Delta\\[2mm]
B_+ u \geq 0, \mbox{ in } \Delta^+\\[2mm]
u(x,\bar{y},z)=-\psi(\bar{y})\\[2mm]
u_x(\pm L,y,z)=0 
\end{array}
\]
It follows that $u\leq 0$.\\
We can show similarly that $\xi+\phi \geq 0$. Hence, we have
\[
-\psi \leq \xi \leq \psi.
\]
\end{proof}
\section{The operator $\mathbf{T}$ and the invariant measure $\nu$}

\setcounter{equation}{0}
\subsection{Construction of the operator $\mathbf{T}$}
Consider $f \in L^\infty(\mathcal{O})$, following the same procedure of the 1d case, we first solve the interior non-homogeneous Dirichlet problem  \underline{for $\chi$} with $f$ at the right hand side, then we solve the exterior non-homogeneous Dirichlet problem \underline{for $\xi$} with $f$ at the right hand side and $\chi$ as a boundary condition.
Also, for any $\bar{p}_1 \in \bar{\Gamma}_1$, we define the operator
\[
\mathbf{T}f(\bar{p}_1):= \xi(\bar{p}_1).
\]
This defines a linear operator from $ L^\infty \big{(} \bar{\Gamma}_1 \big{)}$ in $L^\infty(\bar{\Gamma}_1)$.
\subsection{Probabilistic interpretation of the operator $\mathbf{T}$}
For any $A$ a Borel subset of $\mathcal{O}$, consider the two following measures of occupation of $A$ by the process $(x(t),y(t),z(t))$ starting at $p \in \mathcal{O}$, namely
\[
\left\{
\begin{array}{rcl}
\nu_{\chi}(p;A)          & := & \mathbb{E}_p\big{(} \int_0^{\bar{\tau}_1} \mathbf{1}_A(x(s),y(s),z(s)) \d s \big{)} \\[2mm]
\nu_{\xi}(p;A) & := & \mathbb{E}_p\big{(} \int_0^{\bar{\tau}} \mathbf{1}_A(x(s),y(s),z(s)) \d s \big{)}.
\end{array}
\right.
\]
We have shown
\[
\chi(p) = \int_\mathcal{O} f(q) \d \nu_{\chi}(p; \d q) \quad \mbox{ and } \quad \xi(\tilde{p}) = \int_\mathcal{O} f(\tilde{q}) \d \nu_{\xi}(\tilde{p}; \d \tilde{q}). 
\]
For any $\bar{p}_1 \in \bar{\Gamma}_1$, we have 
\[
\mathbf{T}(f)(\bar{p}_1) := \mathbb{E}_{\bar{p}_1} \big{(} \int_0^{\bar{\tau}_1} f(x(s),y(s),z(s)) \d s\big{)} + \int_{\bar{\Gamma}} \mathbb{E}_q\big{(} \int_0^{\bar{\tau}} f(x(s),y(s),z(s)) \d s \big{)} \d\gamma(\bar{p}_1; \d q)
\]
$\mathbf{T}(f)$ integrates $f$ over a time cycle of the Markov chain stopped on $\bar{\Gamma}_1$, that is $(x(\bar{\tau}_{1,k}),y(\bar{\tau}_{1,k}),z(\bar{\tau}_{1,k}))$.\\[2mm]

\subsection{Construction of the invariant measure $\mathbf{\nu}$}
Now, define
\begin{equation*}
\nu(f) := \frac{\int_{-L}^L \int_{-Y}^{Y} Tf(r,\bar{y}_1,s) \gamma_\star^+(\d r,\d s) + \int_{-L}^L \int_{-Y}^{Y} Tf(r,-\bar{y}_1,s) \gamma_\star^-(\d r,\d s) }{ \int_{-L}^L \int_{-Y}^{Y} T \mathbf{1}(r,\bar{y}_1,s) \gamma_\star^+(\d r,\d s) + \int_{-L}^L \int_{-Y}^{Y} T \mathbf{1}(r,-\bar{y}_1,s) \gamma_\star^-(\d r,\d s)}.
\end{equation*}
The denominator is well defined and is stricly positive.
Now, we want to solve  the \underline{complete problem}. 
\begin{problem} \label{chap6:pb5}
Find $u$ such that $u \psi^{-1} \mbox{  is bounded for } |y|> |\bar{y} |$ and
\begin{equation}\nonumber
\begin{array}{rcccl}
Au+f & = & 0, & \mbox{ in } &   \mathcal{O} \\[2mm]
B_+u+f & = & 0,& \mbox{ in } & \mathcal{O}^+ \\[2mm]
B_-u+f & = & 0, & \mbox{ in } &  \mathcal{O}^- \\[2mm]
u_x(\pm L,y,z) & = & 0, &  \mbox{ in } &  \lbrace x = \pm L \rbrace \cap \mathcal{O}\\[2mm]
\end{array}
\end{equation}
Considering Problems \ref{chap6:pb1}, \ref{chap6:pb2}, \ref{chap6:pb3} and 4, in the following result we use a functional analysis method to prove that $\nu$ is the unique invariant distribution associated to the solution $(x(t),y(t),z(t))$ of the stochastic variational inequality (\ref{chap6:vi}).
\end{problem}
\begin{theorem}\label{chap6:THM}
The Problem \ref{chap6:pb5} has a unique solution if and only if $\nu(f)=0$.
\end{theorem}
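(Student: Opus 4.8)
The plan is to reduce Problem~\ref{chap6:pb5} to a Poisson-type equation for the operator $I-\mathbf{P}$ on $L^\infty(\bar{\Gamma}_1)$ and then to use the ergodicity of $\mathbf{P}$. \textbf{Step 1 (reduction to $\bar{\Gamma}_1$).} First I would show that $u$ solves Problem~\ref{chap6:pb5} if and only if its trace $\bar{\phi}:=u|_{\bar{\Gamma}_1}$ satisfies the cycle identity
\[
(I-\mathbf{P})\bar{\phi}=\mathbf{T}f \qquad\text{on } \bar{\Gamma}_1,
\]
and $u$ is recovered from $\bar{\phi}$ by solving, on the inner region $\{|y|<\bar{y}_1\}$, the nonhomogeneous interior Dirichlet problem with Dirichlet datum $\bar{\phi}$ at $|y|=\bar{y}_1$ (unique bounded solution, by adding the $f$-part from the theorem on Problem~\ref{chap6:pb3} to the homogeneous part from Theorem~\ref{chap6:prop:etaklimit}), and then, on the outer region $\{|y|>\bar{y}\}$, the nonhomogeneous exterior Dirichlet problem with the trace of the inner solution on $\bar{\Gamma}$ as datum (unique solution in the $\psi$-growth class, by adding the theorem on Problem~\ref{chap6:pb4} to the homogeneous exterior problem treated in Section~3). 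The ``only if'' direction uses Dynkin's formula over one cycle $[0,\bar{\tau}_{1,1}]$ of the Markov chain $\{(x(\bar{\tau}_{1,k}),y(\bar{\tau}_{1,k}),z(\bar{\tau}_{1,k}))\}$, which is licit thanks to the local regularity obtained for the interior and exterior problems (the $H_1(\delta)$/$H^2$ estimates together with the $\pi$-weighted bounds near $y=0$). The ``if'' direction requires checking that the inner and outer solutions agree on the overlap band $\{\bar{y}<|y|<\bar{y}_1\}$ and that the cycle identity forces the trace of the outer solution at $|y|=\bar{y}_1$ to coincide with $\bar{\phi}$, so that the glued function is a genuine solution on $\mathcal{O}\cup\mathcal{O}^-\cup\mathcal{O}^+$ with the nonlocal boundary conditions, the Neumann conditions at $x=\pm L$, and the $\psi^{-1}$-boundedness for $|y|>\bar{y}$.

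\textbf{Step 2 (Fredholm alternative and identification with $\nu(f)=0$).} Since $\mathbf{P}$ is ergodic, \eqref{chap6:ERGODIQUE} provides a unique invariant probability $\gamma_\star=(\gamma_\star^-,\gamma_\star^+)$ on $\bar{\Gamma}_1$ and exponential convergence $\mathbf{P}^n g\to\langle g,\gamma_\star\rangle$ in $L^\infty(\bar{\Gamma}_1)$, where $\langle g,\gamma_\star\rangle:=\iint_{\bar{\Gamma}_1^-}g\,\d\gamma_\star^-+\iint_{\bar{\Gamma}_1^+}g\,\d\gamma_\star^+$. Hence the kernel of $I-\mathbf{P}$ on $L^\infty(\bar{\Gamma}_1)$ is the constants, and $(I-\mathbf{P})\bar{\phi}=g$ is solvable if and only if $\langle g,\gamma_\star\rangle=0$, in which case $\bar{\phi}=\sum_{n\ge 0}\mathbf{P}^n g$ converges and is unique up to an additive constant. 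Taking $g=\mathbf{T}f$ --- which lies in $L^\infty(\bar{\Gamma}_1)$ because the theorem on Problem~\ref{chap6:pb4} gives $|\mathbf{T}f|\le \psi(\bar{y}_1)$ plus a bounded contribution --- and noting that the numerator in the definition of $\nu(f)$ is precisely $\langle\mathbf{T}f,\gamma_\star\rangle$ while the denominator $\langle\mathbf{T}\mathbf{1},\gamma_\star\rangle$ is finite (as $\mathbf{T}\mathbf{1}$ is bounded) and strictly positive, I obtain: Problem~\ref{chap6:pb5} is solvable $\iff\langle\mathbf{T}f,\gamma_\star\rangle=0\iff\nu(f)=0$. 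For the necessity statement, pairing $(I-\mathbf{P})\bar{\phi}=\mathbf{T}f$ with $\gamma_\star$ and using $\mathbf{P}$-invariance yields $\langle\mathbf{T}f,\gamma_\star\rangle=0$, i.e.\ $\nu(f)=0$.

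\textbf{Step 3 (uniqueness).} If $u_1,u_2$ both solve Problem~\ref{chap6:pb5}, then $u_1-u_2$ solves the homogeneous problem ($f=0$); by Step~1 its trace solves $(I-\mathbf{P})(\bar{\phi}_1-\bar{\phi}_2)=0$, hence is a constant, and then uniqueness in the homogeneous interior and exterior problems together with the gluing forces $u_1-u_2$ to equal that constant on $\mathcal{O}\cup\mathcal{O}^-\cup\mathcal{O}^+$. Thus the solution is unique up to an additive constant, which is the sense of uniqueness intended here (a normalization such as $\nu(u)=0$ selects a distinguished representative).

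\textbf{Main obstacle.} The hard part will be Step~1: the rigorous gluing of the interior and exterior solutions across the overlap band and the verification that the glued function satisfies simultaneously the full equation in $\mathcal{O}$, the nonlocal boundary conditions on $\mathcal{O}^\pm$, the Neumann conditions at $x=\pm L$ and the $\psi^{-1}$-growth bound. This relies on all the weighted ($\pi$-, $\Phi$- and $\psi$-weighted) $L^\infty$ and $H^1$ estimates and on the local regularity propositions away from $y=0$, as well as on a careful application of Dynkin's formula over one cycle despite the degeneracy of the generator at $y=0$ and the reflections at $z=\pm Y$ and $x=\pm L$.
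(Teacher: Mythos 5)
Your proposal follows the same architecture as the paper: reduce everything to the boundary operator equation on $\bar{\Gamma}_1$, use the geometric ergodicity estimate (\ref{chap6:ERGODIQUE}) to sum the Neumann series $\sum_{n}\mathbf{P}^n(\mathbf{T}f)$ when the $\gamma_\star$-mean of $\mathbf{T}f$ (i.e.\ the numerator of $\nu(f)$) vanishes, and then rebuild $u$ by solving the interior and exterior problems and gluing across the band $\bar{y}<|y|<\bar{y}_1$ --- this is exactly the paper's construction of $\tilde{\chi}^k=\chi^0+\cdots+\chi^k$ and $\tilde{\xi}^k=\xi^0+\cdots+\xi^k$ with $\tilde{\xi}^k|_{\bar{\Gamma}_1}=\mathbf{T}f+\mathbf{P}(\mathbf{T}f)+\cdots+\mathbf{P}^k(\mathbf{T}f)$, merely rephrased as a Fredholm alternative for $I-\mathbf{P}$. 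Where you genuinely diverge is the necessity direction: you propose to prove the exact cycle identity $(I-\mathbf{P})(u|_{\bar{\Gamma}_1})=\mathbf{T}f$ via Dynkin's formula over one cycle, which would require a verification argument (applicability of It\^o's formula to a solution with only weighted $H^1$/local $H^2$ regularity near the degeneracy $y=0$, integrability of $\bar{\tau}_1$, handling of the reflections at $x=\pm L$ and the constraint at $z=\pm Y$) that the paper never carries out. The paper avoids this entirely by a purely analytic comparison: $u-\tilde{\chi}^k$ and $u-\tilde{\xi}^k$ solve the \emph{homogeneous} interior and exterior problems, so the maximum-principle bounds give $\|\tilde{\xi}^k\|_{L^\infty(\bar{\Gamma}_1)}\leq C$ uniformly in $k$, and since $\tilde{\xi}^k|_{\bar{\Gamma}_1}=(k+1)\int_{\bar{\Gamma}_1}\mathbf{T}f\,\d\gamma_\star+O(1)$ by (\ref{chap6:ERGODIQUE}), boundedness forces $\nu(f)=0$; this uses only results already established and is the safer route, so if you keep the Dynkin step you must supply the missing verification lemma. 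Finally, your observation that uniqueness can only hold up to an additive constant (constants solve the homogeneous Problem \ref{chap6:pb5}, so a normalization is needed) is a sound and in fact more careful reading than the paper's one-line claim that uniqueness follows from the ergodicity of $\mathbf{P}$; your Step 3 derivation of this via the kernel of $I-\mathbf{P}$ and uniqueness of the homogeneous interior/exterior problems is consistent with what the paper's argument actually delivers.
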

\begin{proof}
Similar arguments as the 1d case are considered. 
Uniqueness is guaranteed by the ergodic property of the operator $\mathbf{P}$. 
Suppose $\nu(f)=0$. We define $\chi$ the solution of the Interior non-homogeneous Dirichlet Problem \ref{chap6:pb3} and $\xi$ the solution of the Exterior non-homogeneous Dirichlet Problem \ref{chap6:pb4}.
We set $\chi^0:=\chi$ and $\xi^0:=\xi$, and for $k \geq 0$, we define $\chi^{k+1}$  by
\begin{equation}\nonumber
\begin{array}{rcccl}
A \chi^{k+1}                       & = & 0, &  \mbox{ in }   &  \Delta_1,\\[2mm]
B_+ \chi^{k+1}                   & = & 0, &  \mbox{ in }   &  \Delta_1^+,\\[2mm]
B_- \chi^{k+1}                    & = & 0, &  \mbox{ in }   &  \Delta_1^-, \\[2mm] 
\chi_x^{k+1}(\pm L,y,z)     & = & 0, &  \mbox{ in }   &  (-\bar{y}_1,\bar{y}_1)\times(-Y,Y)\\[2mm]
\end{array}
\end{equation}
with
\[
\chi^{k+1}(x,\bar{y}_1,z)= \xi^k(x,\bar{y}_1,z) ; \quad  \chi^{k+1}(x,-\bar{y}_1,z)= \xi^k(x,-\bar{y}_1,z).
\]
Afterwards, we define $\xi_{k+1}$ by 
\begin{equation}\nonumber
\begin{array}{rcccl}
A \xi^{k+1}               & = & 0, & \mbox{ in } &  \Delta_u,\\[2mm]
B_+ \xi^{k+1}        & = & 0, & \mbox{ in } &  \Delta_u^+,\\[2mm]
\xi_x^{k+1}(\pm L,y,z) & = & 0, &  \mbox{ in } &  (\bar{y},+\infty)\times(-Y,Y)\\[2mm]
\end{array}
\end{equation}
with
\[ 
\xi^{k+1}(x,\bar{y},z)= \chi^{k+1}(x,\bar{y},z).
\] 
Similarly, we define $\xi^{k+1}$ by
\begin{equation}\nonumber
\begin{array}{rcccl}
A \xi^{k+1}                    & = & 0, & \mbox{ in } &  \Delta_d,\\[2mm]
B_- \xi^{k+1}           & = & 0, & \mbox{ in } &  \Delta_d^-,\\[2mm]
\xi_x^{k+1}(\pm L,y,z) & = & 0, &  \mbox{ in } &  (-\infty,-\bar{y})\times(-Y,Y)\\[2mm]
\end{array}
\end{equation}
with
\[ 
 \xi^{k+1}(x,-\bar{y},z)= \chi^{k+1}(x,-\bar{y},z).
\]
That means
\[
\xi^0|_{\bar{\Gamma}_1} = \mathbf{T} f ; \quad \xi^{k+1} |_{\bar{\Gamma}_1} = \mathbf{P}(\xi^k|_{\bar{\Gamma}_1}).
\]
Next, we set
\[
 \tilde{\xi}^k=\xi^0 + \xi^1 + ... + \xi^{k} ; \quad \tilde{\chi}^k=\chi^0 + \chi^1 + ... + \chi^{k} 
\]
then
\[
\tilde{\xi}^k|_{\bar{\Gamma}_1} = \mathbf{T}f + \mathbf{P}(\mathbf{T}f) + ... + \mathbf{P}^k(\mathbf{T}f). 
\]
Now, let us remark that $\tilde{\chi}^k$ satisfies the following equation: 
\begin{equation}\label{chap6:tildechik}
\begin{array}{rcccl}
A \tilde{\chi}^{k} + f                         & = & 0, & \mbox{ in } & \Delta_1,\\[2mm]
B_+ \tilde{\chi}^{k} + f                 & = & 0, & \mbox{ in } & \Delta_1^+,\\[2mm]
B_- \tilde{\chi}^{k} + f                & = & 0, & \mbox{ in } & \Delta_1^-,\\[2mm] 
\tilde{\chi}^{k}_x(\pm L,y,z)           & = & 0, &  \mbox{ in } &  (-\infty,-\bar{y})\times(-Y,Y)\\[2mm]
\end{array}
\end{equation}
with
\[
\tilde{\chi}^{k}(x,\bar{y}_1,z) = \tilde{\xi}^{k-1}(x,\bar{y}_1,z) ; \quad  \tilde{\chi}^{k}(x,-\bar{y}_1,z)= \tilde{\xi}^{k-1}(x,-\bar{y}_1,z)
\]
$\tilde{\xi}^k$ satisfies the following equations:
\begin{equation}\label{chap6:tildexik1}
\begin{array}{rcccl}
A \tilde{\xi}^{k} + f                    & = & 0, & \mbox{ in } & \Delta_u,\\[2mm]
B_+ \tilde{\xi}^{k} + f            & = & 0, & \mbox{ in } & \Delta_u^+,\\[2mm]
\tilde{\xi}^{k}_x(\pm L,y,z)      & = & 0, & \mbox{ in } & (\bar{y},+ \infty) \times (-Y,Y)\\[2mm]
\end{array}
\end{equation}
\[
\tilde{\xi}^{k}(x,\bar{y},z)= \tilde{\chi}^k(x,\bar{y},z)
\]
and
\begin{equation}\label{chap6:tildexik2}
\begin{array}{rcccl}
A \tilde{\xi}^{k} + f                    & = & 0, & \mbox{ in } & \Delta_d,\\[2mm]
B_+ \tilde{\xi}^{k} + f            & = & 0, & \mbox{ in } & \Delta_d^-,\\[2mm]
\tilde{\xi}^{k}_x(\pm L,y,z)     & = & 0, & \mbox{ in } & (-\infty,-\bar{y}) \times (-Y,Y)\\[2mm]
\end{array}
\end{equation} 
\[
\tilde{\xi}^{k}(x,-\bar{y},z)= \tilde{\chi}^k(x,-\bar{y},z).
\] 
The condition $\nu(f)=0$ means 
\[
\int_{\bar{\Gamma}_1} \mathbf{T}f(x) \d \pi(x) = \int_{\bar{\Gamma}_1} \mathbf{T}f(r,-\bar{y}_1,s) \d \pi_1(r,s) + \int_{\bar{\Gamma}_1} \mathbf{T}f(r,\bar{y}_1,s) \d \pi_2(r,s)=0.
\]
From the estimate (\ref{chap6:ERGODIQUE}), we obtained 
\[
\tilde{\xi}^k \mbox{ converges in } L^\infty(\bar{\Gamma}_1).
\]
Now, we notice that $\tilde{\chi}^k -\chi$ is a solution of the interior homogeneous Dirichlet problem with $(\tilde{\chi}^k -\chi)|_{\bar{\Gamma}_1}=  \tilde{\xi}^{k-1}|_{\bar{\Gamma}_1}$ and $\tilde{\xi}^k - \xi$ is a solution of the Exterior homogeneous Dirichlet problem with $(\tilde{\xi}^k -\xi)|_{\bar{\Gamma}}=(\tilde{\chi}^k -\chi)|_{\bar{\Gamma}}$. 
Then, we obtain
\[
\| \tilde{\xi}^k - \xi \|_{L^\infty} \leq \| \tilde{\chi}^k -\chi \|_{L^\infty} \leq \| \tilde{\xi}^{k-1} \|_{L^\infty} \leq C. 
\]
We can extract a subsequence such that
\[
\tilde{\xi}^k \to \tilde{\xi} ; \quad \tilde{\chi}^k \to \tilde{\chi}. 
\] 
Moreover, $\tilde{\chi}$ satisfies equation:
 \begin{equation}\label{chap6:tildexi}
\begin{array}{rcccl}
A \tilde{\chi}  + f                 & = & 0, & \mbox{ in } & \Delta_1,\\[2mm]
B_+ \tilde{\chi} + f          & = & 0, & \mbox{ in } &  \Delta_1^+,\\[2mm]
B_- \tilde{\chi} + f         & = & 0, & \mbox{ in } &  \Delta_1^-,\\[2mm] 
\tilde{\chi}_x(\pm L,y,z)    & = & 0, & \mbox{ in } & (-\bar{y}_1,\bar{y}_1) \times (-Y,Y)\\[2mm]
\end{array}
\end{equation}
\[
\tilde{\chi}(x,\bar{y}_1,z)= \tilde{\xi}(x,\bar{y}_1,z) ; \quad  \tilde{\chi}(x,-\bar{y}_1,z)= \tilde{\xi} (x,-\bar{y}_1,z)
\]
and $\tilde{\xi}$ satisfies equations: 
\begin{equation}\label{chap6:tildechik1}
\begin{array}{rcccl}
A \tilde{\xi}  + f                     & = & 0, & \mbox{ in } & \Delta_u,\\[2mm]
B_+ \tilde{\xi} + f              & = & 0,&  \mbox{ in } & \Delta_u^+,\\[2mm]
\tilde{\xi}_x(\pm L,y,z) + f  & = & 0, & \mbox{ in } & (\bar{y},+ \infty) \times (-Y,Y)\\[2mm]
\end{array}
\end{equation}
\[
\tilde{\xi} (x,\bar{y},z) = \tilde{\chi} (x,\bar{y},z)
\]
\begin{equation}\label{chap6:tildechik2}
\begin{array}{rcccl}
A \tilde{\xi} + f                    & = & 0, & \mbox{ in } & \Delta_d,\\[2mm]
B_- \tilde{\xi} + f           & = & 0, & \mbox{ in } & \Delta_d^-,\\[2mm]
\tilde{\xi}_x(\pm L,y,z)      & = & 0, & \mbox{ in } & (-\infty,-\bar{y}) \times (-Y,Y)\\[2mm]
\end{array}
\end{equation} 
\[
\tilde{\xi} (x,-\bar{y},z)= \tilde{\chi} (x,-\bar{y},z).
\] 
Then, we must have
\[
\tilde{\chi} = \tilde{\xi} \quad \mbox{ in } \quad \bar{y} < \pm y < \bar{y}_1  
\]
Thus, the function
\[
u = \begin{cases} \tilde{\chi}  & \mbox{ in } \Delta_1, \\ \tilde{\xi} & \mbox{ in }\Delta_1^c \end{cases}
\]
is the solution of Problem \ref{chap6:pb5}.\\
Now, suppose that Problem \ref{chap6:pb5} has a unique solution $u$. Considering the same sequences $\tilde{\chi}^k$ and $\tilde{\xi}^k$, we have that $u-\tilde{\chi}^k$ is a solution of the interior homogeneous Dirichlet problem with $(u-\tilde{\chi}^k)|_{\bar{\Gamma}_1} =  (u-\tilde{\xi}^{k-1})|_{\bar{\Gamma}_1}$ and $u-\tilde{\xi}^k$ is a solution of the exterior homogeneous Dirichlet problem $(u-\tilde{\xi}^k)|_{\bar{\Gamma}}=(u-\tilde{\chi}^k)|_{\bar{\Gamma}}$. 
Hence,
\[
\| u - \tilde{\xi}^k \|_{L^\infty(\bar{\Gamma}_1)}  \leq \| u - \tilde{\chi}^k \|_{L^\infty(\bar{\Gamma}_1)} \leq \| u - \tilde{\xi}^{k-1} \|_{L^\infty(\bar{\Gamma}_1)} \leq \| u \|_{L^\infty(\bar{\Gamma}_1)} 
\]
and so, 
\[
\| \tilde{\xi}^k \|_{L^\infty(\bar{\Gamma}_1)}  \leq C.
\]
We have
\[
\tilde{\xi}^k = (k+1) \int_{\bar{\Gamma}_1} \mathbf{T}f(x)\d \pi(x) + \sum_{j=0}^{k} \mathbf{P}^j(\mathbf{T}(f-\nu(f)))
\]
and since the sum is bounded, we obtain 
\[
(k+1) \int_{\bar{\Gamma}_1} \mathbf{T}f(x)\d \pi(x) \mbox{ is bounded.}
\]
That leads to
\[
\int_{\bar{\Gamma}_1} \mathbf{T}f(x)\d \pi(x) = 0
\]
That implies $\nu(f)=0$.
\end{proof}

Consider now $\varphi$ a smooth function in $[-L,L] \times \mathbb{R} \times [-Y,Y]$ with compact support. If we take
\begin{equation*}
\begin{array}{l}
f = - A\varphi,\\[2mm]
f(x,y,Y) = - B_+\varphi,\\[2mm]
f(x,y,-Y) = - B_-\varphi,\\[2mm]
\end{array}
\end{equation*}
then $\varphi$ is solution of (\ref{chap6:pb5}) for this $f$. From Theorem \ref{chap6:THM}, we have
\begin{equation}\nonumber
\begin{array}{ccl}
 & & \int_{-L}^L \int_{-\infty}^{\infty} \int_{-Y}^{Y} \big{\lbrace} \dfrac{1}{2} \varphi_{yy}  +\dfrac{1}{2} \varphi_{xx}  - \alpha x \varphi_{x} - (\beta x + c_0y+kz)  \varphi_{y} + y \varphi_z  \big{\rbrace} \d \nu(x,y,z)\\[2mm]
& + &  \int_{-L}^L \int_{0}^{\infty} \big{\lbrace} \dfrac{1}{2} \varphi_{yy}  +\dfrac{1}{2} \varphi_{xx}  - \alpha x \varphi_{x} - (\beta x + c_0y+kY)  \varphi_{y}  \big{\rbrace} \d \nu(x,y,Y)\\[2mm] 
& + &  \int_{-L}^L \int_{-\infty}^{0} \big{\lbrace} \dfrac{1}{2} \varphi_{yy}  +\dfrac{1}{2} \varphi_{xx}  - \alpha x \varphi_{x} - (\beta x + c_0y-kY)  \varphi_{y} \big{\rbrace} \d \nu(x,y,-Y) =0.
\label{chap6:m4}
\end{array}
\end{equation}
If $\nu$ has a density $m$, then we deduce that
\begin{equation}
 \alpha \frac{\partial}{\partial x}[xm] + \frac{\partial}{\partial y}[(\beta x + c_0 y + kz)m] -y \frac{\partial m }{\partial z} + \frac{1}{2}\frac{\partial^2 m}{\partial x^2} + \frac{1}{2}\frac{\partial^2 m}{\partial y^2} = 0 \quad \mbox{ in } (0,L) \times \mathbb{R} \times (-Y,Y) 
\label{chap6:m5}
\end{equation}
in the sense of distributions. If we test (\ref{chap6:m5}) with $\varphi$ and integrate by parts, we obtain 
\begin{eqnarray*}
-\int_{-L}^L \int_{-\infty}^{+\infty} y m(x,y,Y) \varphi(x,y,Y) d xd y + \int_0^L \int_{-\infty}^{+\infty} y m(x,y,-Y) \varphi(x,y,-Y) \d x \d y\\[2mm]
 +\int_{-L}^L \int_{-\infty}^{\infty} \int_{-Y}^{Y} m(x,y,z) \big{\lbrace} \dfrac{1}{2} \varphi_{yy}  +\dfrac{1}{2} \varphi_{xx}  - \alpha x \varphi_{x} - (\beta x + c_0y+kz)  \varphi_{y} + y \varphi_z  \big{\rbrace} \d x \d y \d z =0 
\end{eqnarray*}
and comparing with (\ref{chap6:m4}) 
\begin{eqnarray*}
-\int_{-L}^L \int_{-\infty}^{0} y m(x,y,Y) \varphi(x,y,Y) d xd y + \int_0^L \int_{0}^{+\infty} y m(x,y,-Y) \varphi(x,y,-Y) \d x \d y\\[2mm]
 -\int_{-L}^L \int_{0}^{\infty} m(x,y,Y) \big{\lbrace} \dfrac{1}{2} \varphi_{yy}  +\dfrac{1}{2} \varphi_{xx}  - \alpha x \varphi_{x} - (\beta x + c_0y+kY)  \varphi_{y} + y \varphi \big{\rbrace} \d x \d y\\[2mm]
 -\int_{-L}^L \int_{-\infty}^{0} m(x,y,-Y) \big{\lbrace} \dfrac{1}{2} \varphi_{yy}  +\dfrac{1}{2} \varphi_{xx}  - \alpha x \varphi_{x} - (\beta x + c_0y-kY)  \varphi_{y}  - y \varphi \big{\rbrace} \d x \d y =0
\end{eqnarray*}
we finally deduce
\begin{eqnarray*}
ym + \frac{\partial}{ \partial x}[ xm ] + \frac{\partial}{ \partial y}[ (\beta x + c_0y+kY) m ] + \dfrac{1}{2} \dfrac{\partial^2 m}{\partial x^2 } + \dfrac{1}{2} \dfrac{\partial^2 m}{\partial y^2 }  = 0 , \quad  \mbox{ on } \mathcal{O}^+\\[3mm]
-ym + \frac{\partial}{ \partial x}[ xm ] + \frac{\partial}{ \partial y}[ (\beta x + c_0y-kY) m ] + \dfrac{1}{2} \dfrac{\partial^2 m}{\partial x^2 } + \dfrac{1}{2} \dfrac{\partial^2 m}{\partial y^2 }  =  0, \quad  \mbox{ on }\mathcal{O}^-\\[3mm]
 m  =  0, \quad  \mbox{ on } (-L,L) \times (-\infty,0) \times \{ Y \} \cup (-L,L) \times (0,\infty) \times \{ -Y \}
\end{eqnarray*}
The proof of the main result is complete.

\section{Appendix: Proofs of lemmas \ref{chap6:lem:l1},\ref{chap6:lem:l2},\ref{chap6:lem:l3},\ref{chap6:lem:l4} }
\begin{proof}[Proof of Lemma \ref{chap6:lem:l1}]
Denote $\gamma = \| \phi \|_{L^\infty}$.
Notice we have
\[
\begin{array}{rcll}
(u-\gamma)^+(x,\pm \bar{y}_1, z) & = & 0  & \mbox{ if }  x \in (-L,L), \qquad z \in (-Y,Y) \\[2mm]
(u-\gamma)^+(x,y,\pm Y)& = & 0 & \mbox{ if }  x \in (-L,L), \qquad  0< \pm y < \bar{y}_1\\[2mm] 
\end{array}
\] 
and
\[
\begin{array}{rcll}
(u+\gamma)^-(x,\pm \bar{y}_1, z) & = & 0  & \mbox{ if }  x \in (-L,L), \qquad z \in (-Y,Y) \\[2mm]
(u+\gamma)^-(x,y,\pm Y)& = & 0 & \mbox{ if }  x \in (-L,L), \qquad 0< \pm y < \bar{y}_1.\\[2mm] 
\end{array}
\] 
Choose $v=u -(u-\gamma)^+ \in K$ and we obtain
\[
-a(u,(u-\gamma)^+) - \lambda (u,(u-\gamma)^+) \geq -\lambda (w, (u-\gamma)^+)
\]
\[
a(u,(u-\gamma)^+) + \lambda (u,(u-\gamma)^+) \leq \lambda (w, (u-\gamma)^+).
\]
Switching the first argument by $(u-\gamma)$ in the previous inequality, we obtain 
\[
a((u-\gamma)^+,(u-\gamma)^+) + \lambda | (u-\gamma)^+|_{L^2}^{2} \leq \lambda (w - \gamma, (u-\gamma)^+)
\]
and $w \leq \gamma$ implies $(u-\gamma)^+=0$.
Taking $v=u +(u+\gamma)^-$ similar arguments yields $(u+\gamma)^- =0$.
\end{proof}

\begin{proof}[Proof of Lemma \ref{chap6:lem:l2}]
We have the following expressions,
\[
a(\eta^{\epsilon},\eta^\epsilon)=\int_{\Delta_1} \frac{\epsilon}{2} (\eta^\epsilon_z)^2 + \frac{1}{2}(\eta^\epsilon_y)^2 + \frac{1}{2} (\eta^\epsilon_x)^2 + (\beta x+ kz + c_0 y) \eta^\epsilon \eta^\epsilon_y - y\eta^\epsilon \eta^\epsilon_z + \alpha x \eta^\epsilon \eta^\epsilon_x 
\]
and
\[
a(\eta^{\epsilon},u_0)=\int_{\Delta_1} \frac{\epsilon}{2} \eta^\epsilon_z u_{0z} + \frac{1}{2} \eta^\epsilon_yu_{0y} + \frac{1}{2} \eta^\epsilon_x u_{0x} + (\beta x+ kz + c_0 y) u_0 \eta^\epsilon_y - y u_0 \eta^\epsilon_z + \alpha x u_0 \eta^\epsilon_x. 
\]
 Inequality $a(\eta^\epsilon,\eta^\epsilon) \leq a(\eta^\epsilon,u_0)$ means 
 \begin{align*}
 \frac{\epsilon}{2} \int_{\Delta_1} (\eta_z^\epsilon)^2 +  \frac{1}{2}\int_{\Delta_1} (\eta_x^\epsilon)^2 + \frac{1}{2}\int_{\Delta_1} (\eta_y^\epsilon)^2 & \leq   \frac{\epsilon}{2}  \int_{\Delta_1} \eta^\epsilon_z u_{0z} + \frac{1}{2}  \int_{\Delta_1} \eta^\epsilon_yu_{0y}\\
 - &   \int_{\Delta_1} (\beta x+  c_0 y + kz) \eta^\epsilon \eta^\epsilon_y + \frac{1}{2}  \int_{\Delta_1}  (\beta x+  c_0 y + kz) u_0 \eta^\epsilon_y\\
 + & \frac{1}{2}  \int_{\Delta_1} \eta^\epsilon_x u_{0x}+  \int_{\Delta_1} \alpha x \eta^\epsilon \eta^\epsilon_x -  \int_{\Delta_1} \alpha x u_0 \eta^\epsilon_x\\
 + &\int_{\Delta_1} y\eta^\epsilon \eta^\epsilon_z -  \int_{\Delta_1} y u_0 \eta^\epsilon_z
 \end{align*}
 We apply Cauchy-Schwartz inequality to the first two terms, then we apply Green formula to the last one and we get
 \begin{align*}
 \frac{\epsilon}{2} \int_{\Delta_1} (\eta_z^\epsilon)^2 +  \frac{1}{2}\int_{\Delta_1} (\eta_x^\epsilon)^2 + \frac{1}{2}\int_{\Delta_1} (\eta_y^\epsilon)^2 & \leq   \epsilon \sqrt{\int_{\Delta_1} (\eta_z^\epsilon)^2} \frac{1}{2} (\int_{\Delta_1} (u_{0z}^\epsilon)^2 )^\frac{1}{2}\\ 
 + &\sqrt{\int_{\Delta_1} (\eta_y^\epsilon)^2} \lbrace \frac{1}{2} (\int_{\Delta_1} (u_{0y}^\epsilon)^2 )^\frac{1}{2} + \| \phi \| (\int_{\Delta_1} (\beta x + c_0 y + kz)^2 )^\frac{1}{2}\rbrace\\
 + & \sqrt{ \int_{\Delta_1} (\eta_x^\epsilon)^2 } \lbrace \frac{1}{2} (\int_{\Delta_1} (u_{0x}^\epsilon)^2 )^\frac{1}{2} + \alpha \| \phi \| (\int_{\Delta_1} x^2 )^\frac{1}{2} + \alpha (\int_{\Delta_1} x^2 u_0 )^\frac{1}{2} \rbrace  \\
 + & \| \phi \|^2 \int_{-L}^L \int_{-\bar{y}_1}^{\bar{y}_1} |y| d xd y +  \| \phi \| (\int_{\Delta_1} |y u_{0z}|) + \int_{-L}^{L} \int_{-\bar{y}_1}^{\bar{y}_1} |y| |u_0(x,y,Y)|\\
 + & \int_{-L}^{L} \int_{-\bar{y}_1}^{\bar{y}_1} |y| |u_0(x,y,-Y)| 
  \end{align*}
with $c_1$,$c_2$,$c_3$ and $c_4$ are positive constant, we get 
\[
 \| \sqrt{\epsilon} \eta_z^\epsilon \|_{L^2}^2 + \| \eta_y^\epsilon \|_{L^2}^2 + \| \eta_x^\epsilon \|_{L^2}^2 \leq  \sqrt{\epsilon} c_1 \| \sqrt{\epsilon} \eta_z^\epsilon \|_{L^2} + c_2 \| \eta_y^\epsilon \|_{L^2} +c_3 \| \eta_x^\epsilon \|_{L^2} + c_4.
 \] 
\end{proof}

\begin{proof}[Proof of Lemma \ref{chap6:lem:l3}]
We have 
\begin{align*}
\int_{\Delta_1} \eta^k_{yy} \eta^k_z y^{2p-1} \theta^{2q} & =   -\int_{\Delta_1} \eta^k_y \eta^k_{zy} y^{2p-1} \theta^{2q} -\int_{\Delta_1} \eta^k_y \eta^k_z (2p-1) y^{2p-2} \theta^{2q} - \int_{\Delta_1} \eta^k_y \eta^k_z y^{2p-1} 2 q \theta' \theta^{2q-1} \\
& =  -\frac{1}{2} \int_{\Delta_1} ((\eta^k_y)^2)_z y^{2p-1} \theta^{2q} -(2p-1) \int_{\Delta_1} (\eta^k_y \theta) (\eta^k_z y^{2p-2} \theta^{2q-1})\\
& \quad - 2q \int_{\Delta_1} (\eta^k_y \theta)  (\eta^k_z y^{2p-1} \theta^{2q-2}) \theta' \\
& =  -\frac{1}{2} \int_{-L}^{L} \int_{-\bar{y}_1}^{\bar{y}_1} (\eta^k_y)^2(x,y,Y) y^{2p-1} \theta^{2q} + \frac{1}{2} \int_{-L}^{L} \int_{-\bar{y}_1}^{\bar{y}_1} (\eta^k_y)^2(x,y,-Y) y^{2p-1} \theta^{2q}\\
&  \quad -(2p-1) \int_{\Delta_1} (\eta^k_y \theta) (\eta^k_z \pi) y^{p-2} \theta^{q-1} - 2q \int_{\Delta_1} (\eta^k_y \theta)  (\eta^k_z \pi) y^{p-1} \theta^{q-2} \theta',\\[2mm]
\int_{\Delta_1} \eta^k_{xx} \eta^k_z y^{2p-1} \theta^{2q} & =   -\frac{1}{2} \int_{\Delta_1} ((\eta^k_x)^2)_z y^{2p-1} \theta^{2q}\\  
& =   -\frac{1}{2} \int_{-L}^{L} \int_{-\bar{y}_1}^{\bar{y}_1} (\eta^k_x)^2(x,y,Y) y^{2p-1} \theta^{2q} + \frac{1}{2} \int_{-L}^{L} \int_{-\bar{y}_1}^{\bar{y}_1} (\eta^k_x)^2(x,y,-Y) y^{2p-1} \theta^{2q}\\[2mm]
\end{align*}
and
\begin{eqnarray*}
\int_{\Delta_1} (c_0 y + kz + \beta x) \eta^k_y \eta^k_z y^{2p-1} \theta^{2q}  & = & \int_{\Delta_1} (c_0 y + kz + \beta x) (\eta^k_y \theta) (\eta^k_z \pi) y^{p-1} \theta^{q-1}\\
\int_{\Delta_1} \alpha x \eta^k_x \eta^k_z y^{2p-1} \theta^{2q}  & = & \int_{\Delta_1} \alpha x (\eta^k_x \theta) (\eta^k_z \pi) y^{p-1} \theta^{q}.\\     
\end{eqnarray*}
Testing $(A \eta^k)$ with $\eta^k_z y^{2p-1} \theta^{2q}$, we deduce
\begin{eqnarray*}
\int (\eta^k_z \pi)^2 & = &  \frac{1}{4} \int_{-L}^{L} \int_{-\bar{y}_1}^{\bar{y}_1} (\eta^k_y)^2(x,y,Y) y^{2p-1} \theta^{2q} - \frac{1}{4} \int_{-L}^{L} \int_{-\bar{y}_1}^{\bar{y}_1} (\eta^k_y)^2(x,y,-Y) y^{2p-1} \theta^{2q}\\ 
&  & + \frac{1}{4} \int_{-L}^{L} \int_{-\bar{y}_1}^{\bar{y}_1} (\eta^k_x)^2(x,y,Y) y^{2p-1} \theta^{2q} - \frac{1}{4} \int_{-L}^{L} \int_{-\bar{y}_1}^{\bar{y}_1} (\eta^k_x)^2(x,y,-Y) y^{2p-1} \theta^{2q}\\
&  & + (p-\frac{1}{2}) \int_{\Delta_1} (\eta^k_y \theta) (\eta^k_z \pi) y^{p-2} \theta^{q-1} + q \int_{\Delta_1} (\eta^k_y \theta)  (\eta^k_z \pi) y^{p-1} \theta^{q-2} \theta'\\
&  & + \int_{\Delta_1} (c_0 y + kz + \beta x) (\eta^k_y \theta) (\eta^k_z \pi) y^{p-1} \theta^{q-1}\\
&  & + \int_{\Delta_1} \alpha x (\eta^k_x \theta) (\eta^k_z \pi) y^{p-1} \theta^{q}.
\end{eqnarray*}
\end{proof}

\begin{proof}[Proof of Lemma \ref{chap6:lem:l4}]
\begin{enumerate}
\item
With
\[
f(\eta^k,\eta_y^k):= -\eta^k (\frac{\pi''}{2} - (c_0y+kz+\alpha x) \pi') -  \eta_y^k  \pi' ,
\]
we have $\forall \psi \in H^1(\Delta_1), \psi(x,y,\pm Y) = 0$ and $\psi(x,\pm \bar{y}_1,z)=0$, 
\[
 \frac{1}{2} \int_{\Delta_1} v^k_y \psi_y  +\frac{1}{2} \int_{\Delta_1} v^k_x \psi_x +
\int_{\Delta_1} \big ( \alpha x v^k_x  + (\beta x + c_0y + kz) v^k_y -yv^k_z\big ) \psi = \int_{\Delta_1}  f(\eta^k,\eta^k_y)  \psi.
\]
Now, when $k$ goes to $+\infty$, we get
\[
 \frac{1}{2} \int_{\Delta_1} v_y \psi_y  +\frac{1}{2} \int_{\Delta_1} v_x \psi_x +
\int_{\Delta_1} \big ( \alpha x v_x  + (\beta x + c_0y + kz) v_y -yv_z\big ) \psi = \int_{\Delta_1}  f(\eta,\eta_y)  \psi.
\]
We deduce that in $H^{-1}(\Delta_1)$ we firstly have $-A v = f(\eta,\eta_y)$ which is equivalent to $\pi A \eta = 0$ and secondly that choice of test function implies 
$\pi \eta_x(\pm L,y,z)=0$ in $(H^{\frac{1}{2}}((-L,L)\times(-\bar{y}_1,\bar{y}_1))'$. 
\item
\begin{enumerate}
\item
We know that $\pi \eta^k \in H^{1}(\Delta_1)$, its trace is well defined and satisfies,
\[
\begin{array}{l}
\gamma(\pi \eta^k)(x,y,Y)=\pi (\chi^{+,k} + \beta^{k,+}) ; \quad y>0\\[2mm]
\gamma(\pi \eta^k)(x,y,-Y)=\pi \chi^{-,k} ; \quad y<0\\[2mm]
\end{array}
\]
with
\begin{equation}
\label{chap6:rff}
\| \chi^{\pm, k } \|_{L^\infty} \leq \| \phi \|_{L^\infty}
\end{equation}
and  $\chi^{-,k},\chi^{+,k}$ satisfy respectively (\ref{chap6:p11}) and (\ref{chap6:p15}) 
\begin{equation}
\begin{array}{l}
\int_{-L}^{L} \int_{0}^{\bar{y}_1} \lbrace \frac{1}{2}(\chi_x^{+,k} \psi_x + \chi_y^{+,k} \psi_y) + (\alpha x \chi_x^{+,k} + (\beta x + c_0 y + kY) \chi_y^{+,k} )\psi \rbrace \d x \d y=0,\\[3mm]
 \forall \psi \in H^1((-L,L)\times(0,\bar{y}_1)) \mbox{ with } \psi(x,0)=\psi(x,\bar{y}_1)=0\\[3mm]
\end{array}
\label{chap6:p11}
\end{equation}
and
\begin{equation}
\begin{array}{l}
\int_{-L}^{L} \int_{-\bar{y}_1}^{0} \lbrace \frac{1}{2}(\chi_x^{-,k} \psi_x + \chi_y^{-,k} \psi_y) + (\alpha x \chi_x^{-,k} + (\beta x + c_0 y - kY) \chi_y^{-,k} )\psi \rbrace \d x \d y=0,\\[3mm]
 \forall \psi \in H^1((-L,L)\times(-\bar{y}_1,0)) \mbox{ with } \psi(x,0)=\psi(x,-\bar{y}_1)=0.\\[3mm]
\end{array}
\label{chap6:p15}
\end{equation}
First, we study convergence of the sequence $\chi^{\pm,k}$ and we deduce PDEs satisfied by $\lim_{k \to 0 } \chi^{\pm,k}$.\\
In particular (\ref{chap6:rff}) implies  
\[
\begin{array}{c}
\chi^{+,k} \to \chi^{+} \mbox{ in } L^2((-L,L)\times(0,\bar{y}_1)) \mbox{ weakly, }\\[2mm]
\chi^{-,k} \to \chi^{-} \mbox{ in } L^2((-L,L)\times(-\bar{y}_1,0)) \mbox{ weakly. }\\[2mm]
\end{array}
\]
and equalities (\ref{chap6:p11}), (\ref{chap6:p15}) imply
\[
\begin{array}{c}
\|  \chi^{+,k} \pi \|_{H^1} \leq C \quad \mbox{ and } \quad  \chi^{+,k} \pi  \to  \chi^{+}\pi \mbox{ in } H^1((-L,L)\times(0,\bar{y}_1)) \mbox{ weakly, }\\[2mm]
\|  \chi^{-,k} \pi \|_{H^1} \leq C \quad \mbox{ and } \quad  \chi^{-,k} \pi \to  \chi^{-} \pi \mbox{ in } H^1((-L,L)\times(-\bar{y}_1,0)) \mbox{ weakly. }\\[2mm]
\end{array}
\]
Denote $\xi^{\pm, k}:= \chi^{\pm,k} y^2 $ and $g(\chi^{+,k},\chi^{+,k}_y):=  -\chi^{+,k} \lbrace 1 - 2y(\alpha x + c_0y +kY)  \rbrace - 2 y \chi^{+,k}_y $. From (\ref{chap6:p11}), we have $B_{Y} \xi^{+,k} = g(\chi^{+,k},\chi^{+,k}_y)$ in $(-L,L) \times (-\bar{y}_1,\bar{y}_1)$, in $H^{-1}((-L,L) \times (0,\bar{y}_1))$. The operator $B_+$ is strictly elliptic then $\xi^{+,k} \in H^2((-L,L)\times (0, \bar{y}_1))$. We have $\xi_x^{+,k}(\pm L, y)=0$ in $(H^{\frac{1}{2}}(0,\bar{y}_1))'$.
As $\xi^{+,k} \in H^2(\Delta_1)$ and $\pi B_+ \chi^{+,k} =0$, we have
\[
-B_+ \xi^{+,k}  = g(\chi^{+,k},\chi^{+,k}_y) \quad \mbox{ in a strong sense }
\]
We obtain that $\forall \psi \in H^1((-L,L)\times (0,\bar{y}_1)), \psi(x,0) = \psi(x,\bar{y}_1) =  0$, 
\begin{align*}
 \int_{-L}^{L} \int_{0}^{\bar{y}_1} \frac{1}{2} ( \xi^{+,k}_x \psi_x + \xi^{+,k}_y \psi_y)  & + \big ( \alpha x  \xi^{+,k}_x  + (\beta x + c_0y + kY)  \xi^{+,k}_y -y \xi^{+,k}_z \big ) \psi\\
 & =  \int_{-L}^{L} \int_{0}^{\bar{y}_1}  g(\chi^k,\chi_y^k) \psi.
\end{align*}
Now, when $k$ goes to $0$, we have
\[
 \int_{-L}^{L} \int_{0}^{\bar{y}_1} \frac{1}{2} ( \xi^{+}_x \psi_x + \xi^{+}_y \psi_y)  + \big ( \alpha x  \xi^{+}_x  + (\beta x + c_0y + kY)  \xi^{+}_y -y \xi^{+}_z \big ) \psi =  \int_{-L}^{L} \int_{0}^{\bar{y}_1}  g(\chi,\chi_y) \psi.
\]
We deduce that in $H^{-1}((-L,L)\times (0, \bar{y}_1))$ we have $-B_+  \xi^{+} = g( \chi^{+},\chi^+_y)$ which is equivalent to $y^2B_+ \chi^+ = 0$ and that choice of test function implies 
$y^2 \chi^+_x(\pm L,y)=0$ in $(H^{\frac{1}{2}}((0, \bar{y}_1)))'$. 
\item
Firstly, $\gamma(\pi \eta^k) \to \pi (\chi^{+} + \beta^+)$ in $H^1((-L,L)\times (0,\bar{y}_1))$ weakly. Secondly, weak convergence of $\pi \eta^k \to \pi \eta$ in $H^1(\Delta_1)$ implies weak convergence of $\gamma(\pi \eta^k) \to \gamma(\pi \eta)$ in $H^{\frac{1}{2}}( \partial \Delta_1)$. By uniqueness of the limit, we have $\gamma(\pi \eta) = \pi (\chi^{+} + \beta^+)$.
\end{enumerate}
\item
Using Green formula
\[
\forall \psi \in H_0^1(\Delta_1) \cap H^2(\Delta_1), \quad \int_{\Delta_1} \eta^k \psi_{yy} + \int_{\Delta_1} \eta_y^k \psi_y = \int_{\partial \Delta_1} \phi^k \psi_y \d \sigma. 
\] 
Now, when $k$ goes to $\infty$, we obtain
\[
\forall \psi \in H_0^1(\Delta_1) \cap H^2(\Delta_1), \quad \int_{\Delta_1} \eta \psi_{yy} + \int_{\Delta_1} \eta_y \psi_y = \int_{\partial \Delta_1} \phi \psi_y \d \sigma. 
\]  
\end{enumerate}
\end{proof}

\begin{lemma}We have
\[
 \eta_{xx} \pi^2 \in L^2(\Delta_1) ; \quad \eta_{xy}\pi^2 \in L^2(\Delta_1)  ; \quad  \eta_{yy}\pi^2 \in L^2(\Delta_1) 
\]
\end{lemma}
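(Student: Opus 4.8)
The plan is a weighted second-derivative (Ne\v{c}as--Green) estimate for $v:=\pi\eta\in H^1(\Delta_1)$. Recall that $v$ solves \eqref{chap6:regu1} in $H^{-1}(\Delta_1)$, with the natural Neumann condition $\pi\eta_x(\pm L,\cdot)=v_x(\pm L,\cdot)=0$ and with $v=0$ on $\{y=\pm\bar{y}_1\}$ (since $\pi(\pm\bar{y}_1)=0$), and that \eqref{chap6:regu1} contains \emph{no} second $z$-derivative. Rewriting it as $v_{xx}+v_{yy}=g$ with
\[
g:=2\big(\rho_0 v_y+\alpha x v_x-y v_z-\eta\rho_1-\eta_y\rho_2\big),
\]
I would first check that $\pi g\in L^2(\Delta_1)$: for $p,q$ large enough the functions $\pi,\pi',\pi\pi',\pi\pi''$ are bounded on $\overline{\Delta_1}$ with moreover $|\pi\pi'|\le C\theta$, and all the coefficients $\rho_0,\alpha x,y$ are bounded there; combined with $v\in H^1(\Delta_1)$, $\eta\in L^\infty$ and the a priori bounds $\|\theta\eta_x\|_{L^2}\le C$, $\|\theta\eta_y\|_{L^2}\le C$ from the earlier energy estimates, every term of $\pi g$ is in $L^2$, the only non-obvious one being $\pi\eta_y\rho_2=\pi\pi'\eta_y$, bounded by $C\theta|\eta_y|$.

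The core step is to establish the identity
\[
\int_{\Delta_1}\pi^2\big((v_{xx})^2+2(v_{xy})^2+(v_{yy})^2\big)=\int_{\Delta_1}\pi^2 g^2-4\int_{\Delta_1}\pi\pi'\,v_x v_{xy},
\]
obtained from $\int_{\Delta_1}\pi^2(v_{xx}+v_{yy})^2=\int_{\Delta_1}\pi^2 g^2$ by integrating the cross term $\int\pi^2 v_{xx}v_{yy}$ by parts, first in $x$ and then in $y$ only: the resulting boundary integrals over $\{x=\pm L\}$ vanish because $v_x=0$ there, those over $\{y=\pm\bar{y}_1\}$ vanish because $\pi=0$ there, and since no integration by parts in $z$ is performed, no boundary terms at $z=\pm Y$ arise. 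Then I would bound $4|\pi\pi' v_x v_{xy}|\le\tfrac12\pi^2 v_{xy}^2+8(\pi')^2 v_x^2$, absorb the first term on the left, and use $(\pi')^2 v_x^2=(\pi\pi')^2\eta_x^2\le C\theta^2\eta_x^2$ together with $\|\theta\eta_x\|_{L^2}\le C$ to conclude
\[
\|\pi v_{xx}\|_{L^2(\Delta_1)}+\|\pi v_{xy}\|_{L^2(\Delta_1)}+\|\pi v_{yy}\|_{L^2(\Delta_1)}\le C.
\]
Finally, since $\pi=\pi(y)$ one has $\pi v_{xx}=\pi^2\eta_{xx}$, while $\pi^2\eta_{xy}=\pi v_{xy}-\pi\pi'\eta_x$ and $\pi^2\eta_{yy}=\pi v_{yy}-\pi\pi''\eta-2\pi\pi'\eta_y$, whose correction terms lie in $L^2$ ($\pi\pi''\eta\in L^\infty$, $|\pi\pi'|\le C\theta$ and $\theta\eta_x,\theta\eta_y\in L^2$); this yields $\eta_{xx}\pi^2,\ \eta_{xy}\pi^2,\ \eta_{yy}\pi^2\in L^2(\Delta_1)$.

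The main obstacle is making the integration by parts rigorous — $v$ is a priori only $H^1$ — and, more fundamentally, coping with the Dirichlet--Neumann corners $(\pm L,\pm\bar{y}_1)$: there a Dirichlet edge meets a Neumann edge at a right angle, which is precisely the borderline case, and genuine second derivatives of $v$ fail (logarithmically) to be square-integrable. The weight $\pi$, which vanishes to order $q$ on the Dirichlet edges $\{y=\pm\bar{y}_1\}$ forming half of each corner, is exactly what absorbs that blow-up; the delicate bookkeeping is to arrange all integrations by parts so that every boundary contribution is killed either by the Neumann condition at $x=\pm L$ or by the vanishing of $\pi$ at $y=\pm\bar{y}_1$, with none at $z=\pm Y$. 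The approximation I would use is the regularized family $\eta^\epsilon$ (for which $v^\epsilon=\pi\eta^\epsilon\in H^2(\Delta_1)$, so the identity above is legitimate), together with the uniform first-derivative bounds of Lemmas \ref{chap6:lem:l2} and \ref{chap6:lem:l3}; the extra term generated by $\tfrac{\epsilon}{2}\eta^\epsilon_{zz}$ is, after one integration by parts in $z$, of order $\sqrt\epsilon$ times quantities controlled by $\|\sqrt\epsilon\,\eta^\epsilon_z\|_{L^2}$, hence negligible as $\epsilon\to0$.
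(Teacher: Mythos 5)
Your weighted Ne\v{c}as identity is correct as algebra, and it is a genuinely different mechanism from the paper's. The paper never squares the equation: it differentiates \eqref{chap6:regu1} once in $x$ and once in $y$ (equations \eqref{chap6:regu2}--\eqref{chap6:regu3} for $w=v_x$ and $\tilde w=v_y$) and performs weighted first-order energy estimates, testing with $\pi^2 w$ and $\pi^2\tilde w$; the boundary contributions at $z=\pm Y$ it produces are of the form $\int_{\partial\Delta_1} y\,\pi^2 w^2\,\vec n(z)\,\d\sigma$, i.e.\ squares of \emph{first}-derivative traces, which are under control through the boundary problems and Remark \ref{chap6:rem:r1}. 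Your route delivers $\pi^2\eta_{xx},\pi^2\eta_{xy},\pi^2\eta_{yy}$ in one stroke without differentiating the equation, and your bookkeeping of which boundary terms are killed by the Neumann condition at $x=\pm L$ and by the vanishing of $\pi$ at $y=\pm\bar y_1$, as well as the conversion back from $v$ to $\eta$, is right; the price is that the cross-term integration by parts must be justified by an approximation, which is exactly where the proposal is thin.

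The gap is the step you yourself call the main obstacle and then dismiss in one sentence: the claim that the $\tfrac{\epsilon}{2}\eta^\epsilon_{zz}$ contribution is, ``after one integration by parts in $z$'', of order $\sqrt\epsilon$ times quantities controlled by $\|\sqrt\epsilon\,\eta^\epsilon_z\|_{L^2}$. If you carry out your identity for $v^\epsilon=\pi\eta^\epsilon$, the squared elliptic part contains the cross terms $\epsilon\int_{\Delta_1}\pi^2 v^\epsilon_{xx}v^\epsilon_{zz}$ and $\epsilon\int_{\Delta_1}\pi^2 v^\epsilon_{yy}v^\epsilon_{zz}$. Integrating these by parts (in $x$ or $y$, then in $z$) yields the harmless nonnegative terms $\epsilon\int\pi^2 (v^\epsilon_{xz})^2$, $\epsilon\int\pi^2 (v^\epsilon_{yz})^2$ and interior lower-order pieces that are indeed $O(\epsilon)$ by \eqref{chap6:uzboundEPSILON}, \emph{but also} boundary integrals over $\{z=\pm Y\}$, e.g.\ $\epsilon\int_{z=Y}\pi^2 v^\epsilon_x v^\epsilon_{xz}\,\d x\,\d y=-\epsilon\int_{z=Y}\pi^2 v^\epsilon_{xx} v^\epsilon_z\,\d x\,\d y$ and the analogous term with $v^\epsilon_{yy}$. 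These vanish on the half-faces where the Neumann condition $\eta^\epsilon_z=0$ holds, but on the complementary half-faces (where the nonlocal boundary PDE holds) they involve the trace of $\eta^\epsilon_z$ on $z=\pm Y$, which no available estimate controls: $\|\sqrt\epsilon\,\eta^\epsilon_z\|_{L^2(\Delta_1)}\le C$ (Lemma \ref{chap6:lem:l2}) and $\|\pi\eta^\epsilon_z\|_{L^2(\Delta_1)}\le C_\pi$ are interior bounds with no trace information, and the prefactor $\epsilon$ multiplies two factors each a priori unbounded as $\epsilon\to0$. So ``negligible as $\epsilon\to0$'' is asserted, not proved. To close the argument you need either a uniform estimate for $\sqrt\epsilon\,v^\epsilon_z$ (together with $\pi^2 v^\epsilon_{xx},\pi^2 v^\epsilon_{yy}$) on those half-faces, or a justification of the identity on the limit equation itself by tangential difference quotients in $x$ and $y$ only (even reflection across $x=\pm L$ for the Neumann edges, the vanishing of $\pi$ at $y=\pm\bar y_1$ for the Dirichlet edges, no quotients in $z$), or else you can fall back on the paper's differentiated-equation route, whose $z=\pm Y$ boundary terms are already controlled by the earlier sections.
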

\begin{proof}
Denote $w:= v_x$. Deriving equation (\ref{chap6:regu1}) with respect to $x$, we obtain equality (\ref{chap6:regu2}),
\begin{equation}
\label{chap6:regu2}
-\frac{1}{2}w_{xx} -\frac{1}{2}w_{yy} + \Lambda(x,y,z) w_y + \alpha x w_x - y w_z = -\beta v_y - \alpha v_x + \rho_1 \eta_x + (\rho_1)_x \eta + \rho_2(y) \eta_{xy}
\end{equation}
Testing equality (\ref{chap6:regu2}) with $\pi^2$, we obtain
\begin{align*}
\frac{1}{2} \int_{\Delta_1} (w_x \pi )^2 + \frac{1}{2} \int_{\Delta_1} (w_y \pi)^2 & = - \int_{\Delta_1} w_y \pi \pi' w - \int_{\Delta_1} \Lambda(x,y,z) w_y w \pi^2 - \int_{\Delta_1} \alpha x w_x w \pi^2 \\
& \quad + \int_{\Delta_1} y w_z w \pi^2 - \beta \int_{\Delta_1} v_y w \pi^2 - \alpha \int_{\Delta_1} v_x w \pi^2 + \int_{\Delta_1} \rho_1 \eta_x w \pi^2\\ 
& \quad + \int_{\Delta_1} (\rho_1)_x \eta w \pi^2 + \int_{\Delta_1} \eta_{xy} \rho_2(y)w \pi^2
\end{align*}
which means
\begin{eqnarray*}
\frac{1}{2} \int_{\Delta_1} (w_x \pi )^2 + \frac{1}{2} \int_{\Delta_1} (w_y \pi)^2 & = & - \int_{\Delta_1} w_y \pi \pi' w - \int_{\Delta_1} \Lambda(x,y,z) w_y w \pi^2 - \int_{\Delta_1} \alpha x w_x w \pi^2 \\
& & + \int_{\partial \Delta_1} y \pi^2 w^2 \vec{n}(z) d \sigma - \beta \int_{\Delta_1} v_y w \pi^2 - \alpha \int_{\Delta_1} v_x w \pi^2\\
& & + \int_{\Delta_1} \rho_1  w^2 \pi + \int_{\Delta_1} (\rho_1)_x \eta w \pi^2 + \int_{\Delta_1} \eta_{xy} \rho_2(y)w \pi^2\\
& & + \int_{\Delta_1} (w_y \pi) \rho_2(y) w - \int_{\Delta_1} v_x \pi' \rho_2(y) w. 
\end{eqnarray*}
It is easy to verify that $v_{xx} \pi, v_{xy} \pi \in L^2(\Delta_1)$.\\[2mm]
Denote $\tilde{w}:= v_y$. Deriving equation (\ref{chap6:regu1}), with respect to $x$, we obtain equality (\ref{chap6:regu3}),
\begin{equation}
\label{chap6:regu3}
-\frac{1}{2}\tilde{w}_{xx} -\frac{1}{2}\tilde{w}_{yy} + c_0 \tilde{w} + \Lambda(x,y,z) \tilde{w}_y + \alpha x \tilde{w}_x - y \tilde{w}_z = v_z + \rho_1 \eta_y + (\rho_1)_y \eta + \eta_{yy} \rho_2(y) + \eta_{y} \rho_2(y)'.
\end{equation}
Then, we test equality (\ref{chap6:regu3}) with $\pi^2 \tilde{w}$, we obtain
\begin{eqnarray*}
\frac{1}{2} \int_{\Delta_1} (\tilde{w}_x \pi)^2 + \frac{1}{2} \int_{\Delta_1} (\tilde{w}_y \pi)^2 & = & - \int_{\Delta_1} \tilde{w}_y \pi \pi' \tilde{w} - \int_{\Delta_1} c_0 (w \pi)^2 - \int_{\Delta_1} \Lambda(x,y,z) \tilde{w}_y \tilde{w} \pi^2 - \int_{\Delta_1} \alpha x \tilde{w}_x \tilde{w} \pi^2\\
& & + \int_{\Delta_1} v_z \tilde{w} \pi^2 + \int_{\Delta_1} y \tilde{w}_z \tilde{w} \pi^2 + \int_{\Delta_1} \eta_y \rho_1 \tilde{w} \pi^2 +  \int_{\Delta_1} \eta (\rho_1)_y \tilde{w} \pi^2\\
& & + \int_{\Delta_1} \eta_{yy} \rho_2 \tilde{w} \pi^2 + \int_{\Delta_1} \eta_y (\rho_2)'\pi^2 \tilde{w}  
\end{eqnarray*}
which means
\begin{align*}
 \frac{1}{2} \int_{\Delta_1} (\tilde{w}_x \pi)^2 + \frac{1}{2} \int_{\Delta_1} (\tilde{w}_y \pi)^2 & = - \int_{\Delta_1} \tilde{w}_y \pi \pi' \tilde{w} - \int_{\Delta_1} c_0 (w \pi)^2 - \int_{\Delta_1} \Lambda(x,y,z) \tilde{w}_y \tilde{w} \pi^2 \\
& - \int_{\Delta_1} \alpha x \tilde{w}_x \tilde{w} \pi^2 + \int_{\Delta_1} v_z \tilde{w} \pi^2 + \int_{\partial \Delta_1} y (\tilde{w} \pi)^2 \vec{n}(z) d \sigma + \int_{\Delta_1} \eta_y \rho_1 \tilde{w} \pi^2 \\
& +  \int_{\Delta_1} \eta (\rho_1)_y \tilde{w} \pi^2 + \int_{\Delta_1} (\tilde{w}_y - 2 \eta_y \pi' + \eta \pi'')\tilde{w} \pi + \int_{\Delta_1} \eta_y (\rho_2)'\pi^2 \tilde{w}.  
\end{align*}
Again, it is easy to verify that $v_{yx} \pi, v_{yy} \pi \in L^2(\Delta_1)$.
\end{proof}
\begin{lemma}
We have
\[
 \eta_{xz} y^2 \pi^4 \in L^2(\Delta_1) ; \quad \eta_{yz} y^2 \pi^3 \in L^2(\Delta_1).
\]
\end{lemma}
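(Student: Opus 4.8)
The plan is to obtain these two bounds by the same mechanism that produced the $L^2$-bound $\|\eta_z\pi\|_{L^2}\le C_\pi$ in Lemma~\ref{chap6:lem:l3}, carried one differentiation order higher and fed by the weighted second-order estimates $\eta_{xx}\pi^2,\eta_{xy}\pi^2,\eta_{yy}\pi^2\in L^2(\Delta_1)$ of the preceding lemma. Throughout I would work with $v:=\pi\eta\in H^1(\Delta_1)$, which solves \eqref{chap6:regu1}; the computations are formal and are made rigorous either by difference quotients in the $z$-variable or by carrying them on the regularized solutions $\eta^\epsilon$ of \eqref{chap6:EPSILON} and passing to the weak limit, as in the earlier approximation step.

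For the first claim, set $w:=v_x=\pi\eta_x$ and differentiate \eqref{chap6:regu1} in $x$ to obtain \eqref{chap6:regu2}, an equation of the form $-\frac{1}{2} w_{xx}-\frac{1}{2} w_{yy}+\rho_0 w_y+\alpha x w_x-yw_z=F$ whose right-hand side $F=-\beta v_y-\alpha v_x+\rho_1\eta_x+(\rho_1)_x\eta+\rho_2\eta_{xy}$ is, by the preceding lemma (note $\rho_2=\pi'$ is controlled by $\pi/|y|$ near $y=0$), square-integrable against a high enough power of $\pi$. I would then test \eqref{chap6:regu2} against $w_z\,y^{2p-1}\theta^{2q}$ with $p,q$ large. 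Exactly as in Lemma~\ref{chap6:lem:l3}, the degenerate first-order term $yw_z$ produces $\int_{\Delta_1}(w_z\,y^p\theta^q)^2$ with a favourable sign --- the quantity to be absorbed, which for a suitable choice of the exponents is exactly $\eta_{xz}y^2\pi^4\in L^2(\Delta_1)$. The Laplacian terms $w_{xx}w_z\,y^{2p-1}\theta^{2q}$ and $w_{yy}w_z\,y^{2p-1}\theta^{2q}$ are integrated by parts first in $x$ (resp. $y$) and then in $z$: the boundary terms at $x=\pm L$ vanish (the Neumann condition gives $w=w_z=0$ there), those at $y=\pm\bar{y}_1$ vanish ($\pi$ vanishes there), and, the weight being $z$-independent, the only interior leftover is a cross term $\int w_y w_z\,(y^{2p-1}\theta^{2q})_y$ absorbed by Young's inequality against $\eta_{xy}\pi^2\in L^2$ and the $L^2$-norm of $w_z$. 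What remains are the new contributions: the boundary integrals over the plastic faces $\{z=\pm Y\}$. On the pieces carrying the unfavourable sign of $y^{2p-1}$, namely $\{z=Y,\,y<0\}$ and $\{z=-Y,\,y>0\}$, these have the right sign and are dropped; on $\{z=Y,\,0<y<\bar{y}_1\}$ and $\{z=-Y,\,-\bar{y}_1<y<0\}$ they reduce, up to the extra factor $\pi$ in $w$, to the boundary integrals of \eqref{chap6:uzboundk}, estimated exactly as there: on $\{z=Y\}$ the trace of $\eta$ equals $\chi^{+}+\beta^{+}$ with $\beta^{+}\in H^1$ (Proposition~\ref{chap6:prop:betaplus}) and $\chi^{+}\in K_{Y,\|\phi\|}$, so Remark~\ref{chap6:rem:r1} together with the strict ellipticity of $B_+$ on the two-dimensional face (the argument used for $\xi^{+}=\chi^{+}y^2$ in the proof of Lemma~\ref{chap6:lem:l4}) bounds these integrals once $p$ is large; the $\{z=-Y\}$ face is symmetric, using \eqref{chap6:xik-}. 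Collecting the estimates yields $\eta_{xz}y^2\pi^4\in L^2(\Delta_1)$.

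The second claim follows the same route, differentiating \eqref{chap6:regu1} in $y$ instead: with $\tilde w:=v_y$ one gets \eqref{chap6:regu3}, whose right-hand side now contains $\rho_2\eta_{yy}=\pi'\eta_{yy}$ and $v_z$, again square-integrable against a high power of $\pi$ by the preceding lemma and by Lemma~\ref{chap6:lem:l3}. Testing \eqref{chap6:regu3} against $\tilde w_z\,y^{2p-1}\theta^{2q}$, the term $y\tilde w_z$ produces the weighted $L^2$-norm of $\tilde w_z$, the Laplacian terms give the same $\{z=\pm Y\}$ boundary integrals handled through \eqref{chap6:xik+}--\eqref{chap6:xik-} and Remark~\ref{chap6:rem:r1}, and the rest is absorbed by Young's inequality. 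Since $\tilde w_z=v_{yz}=\pi\eta_{yz}+\pi'\eta_z$ with $\eta_z\pi\in L^2$ (Lemma~\ref{chap6:lem:l3}) and $\pi'\pi^2 y^2$ bounded by $\pi^3 y$, controlling $\tilde w_z$ against the weight $y^2\pi^2$ gives $\eta_{yz}y^2\pi^3\in L^2(\Delta_1)$ --- one power of $\pi$ being spent on the lower-order term $\pi'\eta_z$, which explains the asymmetry with the exponent $\pi^4$ of the first claim.

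The step I expect to be the main obstacle is the control of the boundary integrals on the degenerate plastic faces $\{z=\pm Y\}$: unlike the $x$-boundary (killed by the Neumann condition) and the $y$-boundary (killed by the vanishing of $\pi$), these survive the integration by parts in $z$ because $B_\pm$ does not differentiate in $z$, and estimating them needs both the two-dimensional elliptic regularity of the boundary problems \eqref{chap6:etaplus}--\eqref{chap6:etamoins} near the line $y=0$ and the degenerate weighted regularity of $\chi^\pm$ away from it --- precisely the feature with no analogue in the one-dimensional case of \cite{BenTuri1}, where the boundary equations are ordinary differential equations with semi-explicit solutions. A secondary, purely bookkeeping, difficulty is to check that the powers of $y$, $\pi$ and $\theta$ can be arranged so that each error term carries strictly less weight than the term absorbing it; this is what pins down the exponents $y^2\pi^4$ and $y^2\pi^3$ in the statement.
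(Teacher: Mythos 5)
Your proposal follows essentially the same route as the paper: differentiate the equation for $v=\pi\eta$ in $x$ and in $y$ to get \eqref{chap6:regu2} and \eqref{chap6:regu3}, test them with $w_z$ (resp.\ $\tilde w_z$) times a weight $y^{2p-1}$ times a high power of the cutoff, let the degenerate term $y\partial_z$ produce the weighted $L^2$-norm of the mixed derivative, control the surviving boundary integrals on $\{z=\pm Y\}$ through the trace/boundary-problem regularity, and absorb the remaining terms using the second-order weighted bounds of the preceding lemma, with the exponents $y^2\pi^4$ and $y^2\pi^3$ coming out of the same bookkeeping (including the $v_{yz}=\pi\eta_{yz}+\pi'\eta_z$ correction) that the paper summarizes as ``some calculations.'' This matches the paper's proof in both mechanism and detail.
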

\begin{proof}

We test equality (\ref{chap6:regu2}) with $y^{2p-1} \pi^{2q} w_z$, we obtain
\begin{align*}
 -\frac{1}{2} \int w_{xx} (y^{2p-1} \pi^{2q} w_z) & =  \frac{1}{2} \int_{\Delta_1} w_x w_{zx} y^{2p-1} \pi^{2q}\\
 & = \frac{1}{4} \iint w_x^2(x,y,Y) y^{2p-1} \pi^{2q} d xd y  - \frac{1}{4} \iint w_x^2(x,y,-Y) y^{2p-1} \pi^{2q} \d x \d y\\[2mm]
 -\frac{1}{2} \int_{\Delta_1} w_{yy} (y^{2p-1} \pi^{2q} w_z) & =  \frac{1}{2} \int_{\Delta_1} w_y w_{zy} y^{2p-1} \pi^{2q} + \int_{\Delta_1} w_y w_z \big{(} (2p-1)y^{2p-2} \pi^{2q} + y^{2p-1} 2q \pi^{2q-1} \pi'\big{)}\\
 & =  \frac{1}{4} \iint w_y^2(x,y,Y) y^{2p-1} \pi^{2q} d xd y  - \frac{1}{4} \iint w_y^2(x,y,-Y) y^{2p-1} \pi^{2q} \d x \d y\\
 & + (p-\frac{1}{2}) \int_{\Delta_1} (w_y \pi) (w_z y^p \pi^q) (y^{p-2} \pi^{q-1}) + q \int_{\Delta_1} (w_y \pi) (w_z y^p \pi^q) (y^{p-1} \pi^{q-1} \pi').  
\end{align*}
We have
\begin{align*}
\int_{\Delta_1} (y^p \pi^q w_z)^2 & \leq  \frac{1}{4} \iint \lbrace w_x^2(x,y,Y) + w_y^2(x,y,Y) \rbrace y^{2p-1} \pi^{2q} \d x \d y\\
&  - \frac{1}{4} \iint \lbrace w_x^2(x,y,-Y) + w_y^2(x,y,-Y) \rbrace y^{2p-1} \pi^{2q} \d x \d y\\
& + (p-\frac{1}{2}) \int_{\Delta_1} (w_y \pi) (w_z y^p \pi^q) (y^{p-2} \pi^{q-1}) + q \int_{\Delta_1} (w_y \pi) (w_z y^p \pi^q) (y^{p-1} \pi^{q-2} \pi')\\
&  + \int_{\Delta_1} \Lambda(x,y,z) (w_y \pi) (w_z y^p \pi^q) y^{p-1} \pi^{q-1} + \int_{\Delta_1} \alpha x (w_x \pi) (w_z y^p \pi^q) y^{p-1} \pi^{q-1}\\
& + \int_{\Delta_1} \beta v_y (y^p \pi^q w_z) y^{p-1} \pi^q + \int_{\Delta_1} \alpha v_x (y^p \pi^q w_z) y^{p-1} \pi^q\\
& - \int_{\Delta_1} \rho_1 v_x  (y^p \pi^q w_z) y^{p-1} \pi^{q-1} + \int_{\Delta_1} (\rho_1)_x \eta (y^p \pi^q w_z) y^{p-1} \pi^q\\
& - \int_{\Delta_1} \rho_2 (v_{xy} \pi -v_x \pi') (y^p \pi^q w_z) y^{p-1} \pi^{q-3} .  
\end{align*}
For $p \geq 2$ and $ q \geq 3$, some calculations give $v_{xz} y^2 \pi^3 \in L^2(\Delta_1)$.\\[2mm]
Now, we test equality (\ref{chap6:regu3}) with $y^{2p-1} \pi^{2q} \tilde{w}_z$, we obtain
\begin{align*}
\int_{\Delta_1} (y^p \pi^q \tilde{w}_z)^2 & \leq \frac{1}{4} \iint \lbrace \tilde{w}_x^2(x,y,Y) + \tilde{w}_y^2(x,y,Y) \rbrace y^{2p-1} \pi^{2q} \d x \d y\\
& \quad - \frac{1}{4} \iint \lbrace \tilde{w}_x^2(x,y,-Y) + \tilde{w}_y^2(x,y,-Y) \rbrace y^{2p-1} \pi^{2q} \d x \d y\\
& \quad + (p-\frac{1}{2}) \int_{\Delta_1} (\tilde{w}_y \pi) (\tilde{w}_z y^p \pi^q) (y^{p-2} \pi^{q-1}) + q \int_{\Delta_1} (\tilde{w}_y \pi) (\tilde{w}_z y^p \pi^q) (y^{p-1} \pi^{q-2} \pi')\\
& \quad + \int_{\Delta_1} c_0 \tilde{w} (y^p \pi^q \tilde{w}_z) y^{p-1} \pi^q\\
& \quad + \int_{\Delta_1} \Lambda(x,y,z) (\tilde{w}_y \pi) (\tilde{w}_z y^p \pi^q) y^{p-1} \pi^{q-1} + \int_{\Delta_1} \alpha x (\tilde{w}_x \pi) (\tilde{w}_z y^p \pi^q) y^{p-1} \pi^{q-1}\\
& \quad - \int_{\Delta_1} v_z (y^p \pi^q \tilde{w}_z) y^{p-1} \pi^q - \int_{\Delta_1} \rho_1 (v_y - \eta \pi') (y^p \pi^q \tilde{w}_z) y^{p-1} \pi^{q-1}\\
& \quad - \int_{\Delta_1} (\rho_1)_y \eta  (y^p \pi^q \tilde{w}_z) y^{p-1} \pi^q  - \int_{\Delta_1} (\rho_1)_x \eta (y^p \pi^q \tilde{w}_z) y^{p-1} \pi^q.\\
& \quad - \int_{\Delta_1} \rho_2 (y)\eta_{yy} \pi^2 (y^p \pi^q \tilde{w}_z) y^{p-1} \pi^{q-2}   
\end{align*}
For $p \geq 2$ and $ q \geq 2$, some calculations gives $v_{yz} y^2 \pi^2 \in L^2(\Delta_1)$.
\end{proof}
\begin{lemma}For $p$ and $q$ large enough, we have
\[
 \eta_{xyz} y^p \pi^q \in L^2(\Delta_1) ; \quad \eta_{yyz } y^p \pi^q \in L^2(\Delta_1); \quad  \rho(z) \eta_{xxz} y^p \pi^q \in L^2(\Delta_1) ; \quad \rho(z) \eta_{zz } y^p \pi^q \in L^2(\Delta_1).
\]
\end{lemma}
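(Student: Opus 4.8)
The plan is to continue, one order higher, the weighted-energy bootstrap of the two preceding lemmas. As there, I would work first at the regularized level --- with $\eta^{\epsilon}$ the solution of $A^{\epsilon}\eta^{\epsilon}=0$ from Proposition~\ref{chap6:prop:p26}, so that all the derivatives below are in $L^{2}_{\mathrm{loc}}$ and every manipulation is licit --- derive the stated bounds uniformly in $\epsilon$ (the extra term $\tfrac{\epsilon}{2}\partial_{zz}$ only helps: it adds a nonnegative term on the left, or one controlled by Lemma~\ref{chap6:lem:l2}), and then pass to the limit by weak lower semicontinuity of the weighted $L^{2}$ norms. The engine is the same throughout: differentiate the identity \eqref{chap6:regu1}, or one of its already differentiated forms \eqref{chap6:regu2}--\eqref{chap6:regu3}, in a suitable variable; the new unknown $U$ (a first- or second-order derivative of $v:=\pi\eta$) solves a second-order equation of the same type, and one tests it against $y^{2p-1}\pi^{2q}\,\rho(z)^{2}\,U_{z}$, or against $y^{2p}\pi^{2q}\,\rho(z)^{2}\,U$. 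Since $A$ carries the first-order transport $-y\,\partial_{z}$, the first of these test functions converts the transport term into $\int y^{2p}\pi^{2q}\rho^{2}U_{z}^{2}$ on the left, and the second, after integrating $-\tfrac{1}{2}U_{xx}-\tfrac{1}{2}U_{yy}$ by parts in $x$ and $y$, puts $\int y^{2p}\pi^{2q}\rho^{2}(U_{x}^{2}+U_{y}^{2})$ on the left; the Neumann condition $\eta^{\epsilon}_{x}(\pm L,\cdot)=0$ kills the lateral boundary terms and $\pi(\pm\bar y_{1})=0$ those at $y=\pm\bar y_{1}$, leaving only trace integrals on $\{z=\pm Y\}$ and interior remainders bilinear in quantities already estimated.

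For $\eta_{xyz}y^{p}\pi^{q}$ and $\eta_{yyz}y^{p}\pi^{q}$ I take $\rho\equiv1$. These follow by differentiating \eqref{chap6:regu2}--\eqref{chap6:regu3} once more and testing against a weighted $z$-derivative of the differentiated unknown, exactly as $\eta_{xz}y^{2}\pi^{4}$ and $\eta_{yz}y^{2}\pi^{3}$ were obtained in the preceding lemma but with one extra derivative and correspondingly heavier weights; the trace integrals on $\{z=\pm Y\}$ produced this way are harmless because, by Proposition~\ref{chap6:prop:p26}, $\eta^{\epsilon}_{z}$ (hence the relevant traces of the differentiated unknowns) vanishes on the half-faces $\{z=Y,\ y<0\}$ and $\{z=-Y,\ y>0\}$, while on the complementary half-faces the odd weight $y^{2p+1}$ left by the integration by parts in $z$ gives the boundary contribution a favourable sign, so it can be discarded. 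The only genuinely new point is the appearance, in the source terms, of $v_{zz}$ and of the quantities being estimated themselves (through $\partial_{z}(\rho_{2}\eta_{xy})=\pi'\eta_{xyz}$, and through the $v_{z}$ term in \eqref{chap6:regu3}); each such term is moved to the left and absorbed by Young's inequality, which is possible precisely because $\pi'/\pi$ and $\pi''/\pi$ are singular only at $y=0$ and $y=\pm\bar y_{1}$, so the absorption costs only a bounded number of powers of $y$ and $\pi$ --- recovered by enlarging $p$ and $q$.

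For $\rho(z)\eta_{xxz}y^{p}\pi^{q}$ and $\rho(z)\eta_{zz}y^{p}\pi^{q}$ one localizes away from the lateral faces $\{z=\pm Y\}$, where the top-order $z$-derivatives need not be square-integrable: choose $\rho$ smooth with $\rho(\pm Y)=0$, and redo the corresponding energy identity with the extra factor $\rho(z)^{2}$ in the test function. The $\{z=\pm Y\}$ traces then vanish identically, $\partial_{z}\rho$ contributing only an interior remainder bounded by the already established weighted $L^{2}$ bound on the relevant lower derivative; this gives $\rho\,\eta_{xxz}\,y^{p}\pi^{q}\in L^{2}$, and then the pointwise relation obtained by differentiating $A\eta=0$ once in $z$, namely $y\eta_{zz}=-\eta_{z}-\tfrac{1}{2}\eta_{xxz}-\tfrac{1}{2}\eta_{yyz}+\alpha x\eta_{xz}+\rho_{0}\eta_{yz}+k\eta_{y}$, together with $\pi(y)\sim y^{p'}$ near $y=0$ (which absorbs the factor $1/y$) yields $\rho\,\eta_{zz}\,y^{p}\pi^{q}\in L^{2}$. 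Letting $\epsilon\to0$ concludes.

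The hard part is the degeneracy in $z$: since $A$ has no $z$-diffusion, every new $z$-derivative must be bought through the transport term $y\,\partial_{z}$, which costs powers of $y$ near $y=0$ --- absorbed only because $\pi$ vanishes there --- and at top order produces traces on $\{z=\pm Y\}$ that cannot be controlled, which is what forces the weight $\rho(z)$ into the last two estimates. Arranging the bookkeeping so that in every weighted identity each remainder splits as a product of two factors, each bounded in a weighted $L^{2}$ with weights combining to at most $y^{2p}\pi^{2q}$, and so that the terms containing $v_{zz}$ or the quantity being estimated are strictly absorbed, is the bulk of the proof; it is routine but lengthy, in the style of the preceding lemmas, and is exactly what dictates the requirement that $p$ and $q$ be large enough.
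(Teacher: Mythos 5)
The paper itself contains no proof of this lemma --- the Appendix ends with its bare statement --- so the only benchmark is the weighted-energy bootstrap used for the two preceding appendix lemmas, and your sketch does follow that method (differentiate \eqref{chap6:regu2}--\eqref{chap6:regu3} once more, test with $y^{2p-1}\pi^{2q}$ times a $z$-derivative, absorb remainders by Young's inequality at the cost of extra powers of $y$ and $\pi$, and insert a cutoff $\rho(z)$ vanishing at $z=\pm Y$ for the top-order quantities; your reading of the otherwise undefined $\rho(z)$ as such a cutoff is the sensible one). Working at the level of $\eta^{\epsilon}$ and passing to the limit is also consistent with how the paper proceeds elsewhere.

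There is, however, a genuine gap in your treatment of the boundary terms for the $\eta_{xyz}$ and $\eta_{yyz}$ estimates. Testing the equation for $W:=v_{xy}$ (or $v_{yy}$) with $y^{2p-1}\pi^{2q}W_z$ produces, after the integrations by parts, traces of $W_x^2$ and $W_y^2$ on $\{z=\pm Y\}$, i.e.\ traces of pure $x$- and $y$-derivatives of the differentiated unknown, not of its $z$-derivative. The Neumann condition $\eta^{\epsilon}_z(x,y,\pm Y)=0$ of Proposition \ref{chap6:prop:p26} therefore does not annihilate them; moreover it holds only on the half-faces $\{z=Y,\,y<0\}$ and $\{z=-Y,\,y>0\}$, which are exactly the same half-faces on which the odd weight $y^{2p-1}$ makes the trace contribution nonpositive and hence discardable. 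So both of your mechanisms dispose of the same portion of the boundary, and neither touches the remaining halves $\{z=Y,\,y>0\}$ and $\{z=-Y,\,y<0\}$, where the trace integrals are nonnegative and must be genuinely estimated. This is precisely the role played by Remark \ref{chap6:rem:r1} in the proof of Lemma \ref{chap6:lem:l3}; at the present order one needs its higher-order analogue, namely weighted interior $H^2$/$H^3$ estimates for the boundary problems $B_\pm\chi^\pm=0$ with $\chi^\pm_x(\pm L,y)=0$ (strictly elliptic in $(x,y)$, so such bounds hold away from $y=0,\pm\bar y_1$, the weights $y^{a}\theta^{b}$ compensating the edges). Supplying that ingredient is the only genuinely new input of the lemma, and your sketch omits it. A minor slip besides: differentiating $A\eta=0$ in $z$ gives $y\eta_{zz}=-\tfrac12\eta_{xxz}-\tfrac12\eta_{yyz}+\alpha x\,\eta_{xz}+\rho_0\,\eta_{yz}+k\eta_y$, without the extra $-\eta_z$ you wrote (harmless, since $\eta_z\pi$ is already controlled, but worth correcting).
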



\end{document}